\title{On inverse powers of graphs and topological implications of Hedetniemi's conjecture\texorpdfstring{\footnotetext{This work has been supported by the National Science Centre, Poland, via the PRELUDIUM grant 2016/21/N/ST6/00475. The author is also supported by the Foundation for Polish Science via the START stipend programme.}}{}}
\date{December 2017, updated March 2019}
\author{Marcin Wrochna}
\affil{Institute of Informatics, University of Warsaw, Poland\\
	\textnormal{\texttt{m.wrochna@mimuw.edu.pl}}}
\keywords{homomorphism, adjoint functor, box complex, Hedetniemi's conjecture, index, odd girth}
\newcommand{\NN}{\ensuremath{\mathbb{N}}}
\newcommand{\RR}{\ensuremath{\mathbb{R}}}
\newcommand{\ZZ}{\ensuremath{\mathbb{Z}}}
\newcommand{\Cc}{\ensuremath{\mathcal{C}}}
\DeclareMathOperator{\Hom}{Hom}
\DeclareMathOperator{\im}{im}
\DeclareMathOperator{\avg}{avg}
\DeclareMathOperator{\CN}{CN}
\DeclareMathOperator{\St}{St}
\DeclareMathOperator{\st}{st}
\DeclareMathOperator{\coind}{coind}
\DeclareMathOperator{\ind}{ind}
\newcommand\join{\mathbin{\begin{tikzpicture}[xscale=0.25,yscale=0.2,baseline=-1]
	\draw (0,0)--(1,0)--(0,1)--(1,1)--(0,0);
\end{tikzpicture}}}
\newcommand\notjoin{\mathbin{\begin{tikzpicture}[xscale=0.25,yscale=0.2,baseline=-1]
	\draw (0,0)--(1,0)--(0,1)--(1,1)--(0,0);
	\draw (0.2,-0.4)--(0.8,1.4);
\end{tikzpicture}}}
\newcommand{\dmatch}{\mathbin{\scalebox{0.7}{$\bigtriangleup$}}} 
\newcommand{\vL}[1]{{#1}{}\sp{\circ}}
\newcommand{\vR}[1]{{#1}{}\sp{\bullet}}
\newcommand{\vU}[1]{{#1}{}\sp{\circledast}}
\newcommand{\unk}{\ensuremath{\circledast}}
\newcommand{\vvL}[1]{{#1}_{\circ}}
\newcommand{\vvR}[1]{{#1}_{\bullet}}
\newcommand{\tup}[1]{\bar{#1}}
\newcommand{\Sphere}{\mathcal{S}}
\newcommand{\zeq}{\ensuremath{\simeq_{\ZZ_2}}} 
\newcommand{\SomeF}{\Pi}
\newcommand{\LeftF}{\Lambda}
\newcommand{\CentralF}{\Gamma}
\newcommand{\RightF}{\Omega}
\newcommand\PathLF[1]{\ensuremath{%
\LeftF_{{#1}}%
}}
\newcommand\PathCF[1]{\ensuremath{%
\CentralF_{{#1}}%
}}
\newcommand\PathRF[1]{\ensuremath{%
\RightF_{{#1}}%
}}
\newcommand\Bx[1]{\mathrm{Box}(#1)}
\newcommand\BBx[1]{|\mathrm{Box}(#1)|}
\begin{document}

\maketitle

\begin{abstract}
We consider a natural graph operation $\PathRF{k}$ that is a certain inverse (formally: the right adjoint) to taking the $k$-th power of a graph.
We show that it preserves the topology (the $\ZZ_2$-homotopy type) of the box complex, a basic tool in applications of topology in combinatorics.
Moreover, we prove that the box complex of a graph $G$ admits a $\ZZ_2$-map (an equivariant, continuous map) to the box complex of a graph $H$ if and only if the graph $\PathRF{k}(G)$ admits a homomorphism to $H$, for high enough $k$.

This allows to show that if Hedetniemi's conjecture on the chromatic number of graph products were true for $n$-colorings, then the following analogous conjecture in topology would also be true:
If $X,Y$ are $\ZZ_2$-spaces (finite $\ZZ_2$-simplicial complexes) such that $X \times Y$ admits a $\ZZ_2$-map to the $(n-2)$-dimensional sphere, then $X$ or $Y$ itself admits such a map.
We discuss this and other implications, arguing the importance of the topological conjecture. 
\end{abstract}

\section{Introduction}\label{sec:intro}
We consider three interrelated families of graph operations $\PathLF{k}$, $\PathCF{k}$, $\PathRF{k}$, parameterized by an odd integer $k$.
The left operation, 
the \emph{graph $k$-subdivision} $\PathLF{k}(G)$ of a graph $G$, is obtained by replacing every edge with a path on $k$~edges (this is sometimes denoted $G^{\frac{1}{k}}$).
The central operation, 
the $k$-th \emph{power} $\PathCF{k}(G)$ of $G$ is the graph on the same vertex set $V(G)$, with two vertices joined by an edge if they were connected by a walk of length $k$ in $G$ (equivalently, the adjacency matrix is taken to the $k$-th power; this is sometimes denoted $G^{k}$, note however this is \emph{not} the same as joining vertices at distance at most $k$).
Our results concern the right operation, $\PathRF{k}$, which is a certain inverse to the $k$-th power $\PathCF{k}$, as we shall now make precise. 

Let us write $G \to H$ if there exists a graph homomorphism from $G$ to $H$.
Each operation in the above families is a \emph{functor} in the (thin) category of graphs, which means simply that $G \to H$ implies $\SomeF(G) \to \SomeF(H)$, for any graphs $G,H$ (for $\Pi = \PathLF{k}, \PathCF{k}, \PathRF{k}$ with $k$ odd).\footnote{In this paper, we are only concerned with the existence of homomorphisms and maps, not with their identity (compositions, uniqueness). Thus we only consider the \emph{thin} category of graphs (where all homomorphisms $G\to H$ are identified as one arrow), or equivalently, the poset of graphs (with $G\leq H$ when $G \to H$). In the language of posets, functors are just order-preserving maps, while adjoint functors are known as Galois connections. Additional properties required of adjoint functors in the usual (non-thin) categories are not necessarily met, see~\cite{FoniokT17}.\looseness=-1}
More importantly, $\PathCF{k}$~is~a~\emph{right adjoint} to $\PathLF{k}$, meaning that $\PathLF{k}(G) \to H$ holds if and only if $G \to \PathCF{k}(H)$ does.
Similarly, $\PathRF{k}$ is a right adjoint to $\PathCF{k}$, that is, $\PathCF{k}(G) \to H$ if and only if $G \to \PathRF{k}(H)$, though this is less trivial to check.
(This essentially characterizes $\PathRF{k}$, but we give it explicitly with other definitions in Section~\ref{sec:prelims}).
For example, the third power of a graph $G$ admits an $n$-coloring (a homomorphism into the clique $K_n$) if and only if $G \to \Omega_3(K_n)$.

Adjointness of various graph constructions is the principal tool behind Hell and Ne\v{s}et\v{r}il's celebrated theorem (characterizing the complexity of deciding $G\to H$, for fixed $H$)~\cite{HellN90}, in particular the adjointness of $\PathCF{k}$ to $\PathLF{k}$ is used in the first of many steps.
The construction $\PathRF{k}$ was used implicitly by Gy{\'{a}}rf{\'{a}}s et~al.~\cite{GyarfasJS04}, to answer a question on $n$-chromatic graphs with ``strongly independent color classes'': they showed that $\PathRF{3}(K_n)$ gives an example of an $n$-chromatic graph whose third power is still $n$-chromatic.
The construction has also been used in a homomorphism duality theorem by H{\"{a}}ggkvist and Hell~\cite{HaggkvistH93}. 
Tardif~\cite{Tardif05} used iterations of $\PathRF{3}$ and $\PathCF{3}$ to extend results on Hedetniemi's conjecture to the circular chromatic number, showing new multiplicative graphs (we return to this topic later).
Iterating $\PathRF{3}$ $k$ times is equivalent to applying $\PathRF{3^k}$; the operations $\PathRF{k}$ for general odd $k$ can thus be considered just a smoother way to express such iterations.
They were first considered by Hajiabolhassan and Taherkhani~\cite{HajiabolhassanT10}, who proved, among other results, a characterization of the circular chromatic number in terms of the chromatic number (or even just 3-colorability) of powers of graph subdivisions.

The operations $\PathLF{k}$, $\PathCF{k}$, $\PathRF{k}$ are also the simplest example of so called left, central, and right Pultr functors, a more general construction of adjoint graph functors~\cite{pultr1970right}. 
Another simple example of a (central) Pultr functor is the so called \emph{arc graph construction} (see e.g.~\cite{RorabaughTWZ16}), also crucial in applications to Hedetniemi's conjecture~\cite{PoljakR81,tardif2008hedetniemi}.
Graph products and exponential graphs can also be seen as applying Pultr functors.
See~\cite{FoniokT17} for a survey on graph functors focused around Hedetniemi's conjecture and \cite{FoniokT15} for the question of when both left and right adjoints to a common functor exist.


\paragraph*{The topology of graphs}
We show that $\PathRF{k}$ functors also have very good topological properties, with respect to box complexes.
The \emph{box complex} is a construction that assigns a topological space to a graph.
The exact construction is not important for understanding results, but intuitively, $\BBx{G}$ is obtained by taking the graph product $G \times K_2$ as a topological space (edges become copies of the unit interval $[0,1]$), gluing faces to each four-cycle, and similarly gluing higher-dimensional faces to larger complete bipartite subgraphs (see Figure~\ref{fig:K4}).
It originates from Lov{\'{a}}sz's celebrated proof~\cite{Lovasz78} of Kneser's conjecture, as a tool for showing lower bounds on the chromatic number of graphs. Related complexes and applications have since been extensively studied, see~\cite{matousek2008using} for a graceful and extensive introduction to the topic.\looseness=-1
 
The modern view of Lov{\'{a}}sz' technique uses the box complex as a \emph{$\ZZ_2$-space}: a topological space $X$ with a specified \emph{$\ZZ_2$-action} $\nu$ (a homeomorphism $\nu:X\to X$ such that $\nu(\nu(x)) = x$, for $x\in X$).%
\footnote{In this paper, we assume that all $\ZZ_2$-spaces come from a finite simplicial complex (at least up to $\ZZ_2$-homotopy equivalence), unless stated otherwise. We usually omit the $\ZZ_2$-action from the notation.}
A \emph{$\ZZ_2$-map} (also called an equivariant map) between $\ZZ_2$-spaces $(X,\nu_X),(Y,\nu_Y)$ is then a continuous function $f$ from $X$ to $Y$ such that $f(\nu_X(\cdot)) = \nu_Y(f(\cdot))$.
We write $(X,\nu_X) \to_{\ZZ_2} (Y,\nu_Y)$ if any such $\ZZ_2$-map exists (and $\not\to_{\ZZ_2}$ otherwise).
This is a highly non-trivial relation (as opposed to the existence of just continuous maps, since mapping everything into one point would work).
A version of the Borsuk-Ulam theorem says that there is no $\ZZ_2$-map from a higher-dimensional sphere to a lower-dimensional one, $\Sphere^m \not\to_{\ZZ_2} \Sphere^n$ for $m>n$ (the $\ZZ_2$-action of $\Sphere^n$ is implicitly understood to be the antipodal map $x \mapsto (-x)$ on the unit sphere in $\RR^{n+1}$).

The crucial connection is that a homomorphism $G \to H$ induces a $\ZZ_2$-map $\BBx{G} \to_{\ZZ_2} \BBx{H}$.
Since an $n$-coloring of a graph $G$ is the same as a homomorphism from $G$ to $K_n$, and since the box complex of a clique $\BBx{K_{n}}$ can be shown to be an $(n-2)$-dimensional sphere, one concludes that $G$ cannot have an $n$-coloring if $\BBx{G}$ is an $\geq(n-1)$-dimensional sphere, by the Borsuk-Ulam theorem.
This is what allows to get tight lower bounds on the chromatic number of several families of graphs (Kneser, Schrijver, and generalized Mycielski graphs, the graphs $\PathRF{3}(K_n)$ in~\cite{GyarfasJS04}, quadrangulations of the projective plane~\cite{Youngs96} and projective spaces~\cite{KaiserS15}), as well as various results on the circular chromatic number, colorful subgraphs, etc.

\paragraph*{Results}
Our main technical result is that $\PathRF{k}$ functors behave much like subdivision (in the topological sense) on the box complex.
That is, they preserve the homotopy type and they refine the geometric structure, so that any continuous maps between box complexes can be approximated with graph homomorphisms from refinements $\PathRF{k}(G)$ of $G$. See Figure~\ref{fig:K4} for a particularly simple example.
Formally (here $p_k$ is a certain natural homomorphism $\PathRF{k}(G) \to G$, while $|p_k|$ is the induced $\ZZ_2$-map, see Section~\ref{sec:prelims} for definitions):\looseness=-1

\begin{theorem}[Equivalence]\label{thm:equiv}
 $\BBx{G}$ and $\BBx{\PathRF{k}(G)}$ are $\ZZ_2$-homotopy equivalent, for any graph $G$ with no loops and all odd integers $k$.
 Moreover, $|p_k|$ is a $\ZZ_2$-homotopy equivalence.
\end{theorem}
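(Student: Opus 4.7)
The plan is to show that the simplicial $\ZZ_2$-map $|p_k|\colon \BBx{\PathRF{k}(G)} \to \BBx{G}$ is a $\ZZ_2$-homotopy equivalence by an equivariant acyclic-carrier argument on the box complexes.

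First, I would unfold the right-adjoint description of $\PathRF{k}(G)$: from the defining property $\PathCF{k}(G') \to H$ iff $G' \to \PathRF{k}(H)$ one can write down vertices of $\PathRF{k}(G)$ as compatibility data encoding length-$k$ walk information in $G$, and the natural homomorphism $p_k \colon \PathRF{k}(G) \to G$ as the ``evaluation at the middle vertex'' map, which is well-defined precisely because $k$ is odd. I would then verify that the natural $\ZZ_2$-action on $\PathRF{k}(G)$ coming from walk reversal is compatible with the $\ZZ_2$-action on $G$ used to form $\BBx{G}$, so that $|p_k|$ is indeed a simplicial $\ZZ_2$-map between the two complexes.

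Next, I would analyse the preimages $|p_k|^{-1}(|\sigma|)$ for closed simplices $|\sigma|$ of $\BBx{G}$. Simplices of $\Bx{G}$ correspond to complete bipartite subgraphs $K \join L \subseteq G$, and I would show that the subcomplex of $\Bx{\PathRF{k}(G)}$ spanned by vertices whose associated walks are confined to the neighbourhood pattern of $K \join L$ is $\ZZ_2$-contractible. The intended mechanism is an explicit equivariant deformation retract onto a canonical length-$k$ walk alternating between chosen representatives of $K$ and $L$: one ``slides'' each walk toward the canonical one step by step, working inward from its endpoints, which is compatible with the $\ZZ_2$-action exactly because an odd-length walk has a central vertex but no central edge. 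Monotonicity of the carrier $|\sigma| \mapsto |p_k|^{-1}(|\sigma|)$ should be immediate from the adjunction, since a sub-bipartite-subgraph $K' \join L' \subseteq K \join L$ in $G$ pulls back to a subcomplex of the corresponding walk-data in $\PathRF{k}(G)$.

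Applying the equivariant acyclic-carrier theorem (or an equivariant version of the Nerve lemma) then promotes fibrewise $\ZZ_2$-contractibility to the conclusion that $|p_k|$ is a $\ZZ_2$-homotopy equivalence, which immediately yields the unqualified statement $\BBx{G} \simeq_{\ZZ_2} \BBx{\PathRF{k}(G)}$. The main obstacle I foresee is the equivariance of the contractibility of the fibres: the $\ZZ_2$-involution swaps the two sides of every bipartite subgraph of $G$, so the canonical lift must be chosen consistently with that swap, and this is exactly where the odd parity of $k$ is decisive --- any middle-vertex-based selection for even $k$ would fix a ``middle edge'' that should be swapped, introducing fixed points of the wrong type and breaking the equivariant homotopy. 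A secondary bookkeeping task is making sure the requirement that $G$ has no loops enters at the right place (loops would produce degenerate walks that the deformation retract cannot handle uniformly).
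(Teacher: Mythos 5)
Your strategy is genuinely different from the paper's: the paper does not analyse the fibres of $|p_k|$ at all, but instead proves $\Bx{\PathRF{2k+1}(G)}\zeq\Bx{\PathRF{2k-1}(G)}$ one step at a time, by interpolating an auxiliary graph $\PathRF{2k+1}'(G)$ whose box complex collapses (via explicit acyclic $\ZZ_2$-matchings in the sense of discrete Morse theory) onto both $\Bx{\PathRF{2k+1}(G)}$ and a copy of $\Bx{\PathRF{2k-1}(G)}$, and then exhibiting an explicit linear homotopy to identify the equivalence with the coordinate-forgetting map. A fibrewise (Quillen-Theorem-A / carrier) argument applied directly to $|p_k|$ is a legitimate alternative in outline, and for the equivariant upgrade you would not even need equivariant contractions: both complexes are free, so an ordinary homotopy equivalence that happens to be a $\ZZ_2$-map is automatically a $\ZZ_2$-homotopy equivalence. (Your phrase ``$\ZZ_2$-contractible fibre'' is a category error for the same reason: the individual carrier $p_k^{-1}(\sigma)$ is not $\ZZ_2$-invariant --- the involution sends it to $p_k^{-1}(-\sigma)$ --- and a free $\ZZ_2$-space is never equivariantly contractible; what one needs is plain contractibility together with equivariance of the carrier \emph{assignment}.)

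The genuine gap is that the central claim --- contractibility of the carrier over each simplex $\sigma=\vL{K}\cup\vR{L}$ of $\Bx{G}$ --- is exactly where all the difficulty lives, and your proposed mechanism does not obviously work. A vertex of $\PathRF{2\ell+1}(G)$ is a tuple $\tup{A}=(A_0,\dots,A_\ell)$ of vertex \emph{sets} with $A_0$ a singleton, and $p_k$ evaluates at the endpoint $A_0$, not at a ``middle vertex'' (an odd-length walk has a central edge and no central vertex --- you have the parity discussion backwards, although the correct statement still supports freeness of the action). ``Sliding'' one vertex $\tup{A}$ of a simplex towards a canonical tuple changes its coordinates $A_i$, and enlarging or replacing a coordinate can destroy the adjacency conditions ($A_{i-1}\subseteq B_i$, $A_\ell\join B_\ell$, existence of a common neighbour) tying $\tup{A}$ to the \emph{other} vertices of the same simplex of $\Bx{\PathRF{k}(G)}$; there is in general no single cone point, e.g.\ no tuple over $v$ adjacent to a common neighbour of every simplex in the fibre of $\vL{v}$. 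Making such a deformation consistent across a whole simplex, and distinguishing vertices on the same side from those on the opposite side, is precisely the two-page case analysis the paper performs (its Lemma on collapsing $\Bx{\PathRF{2k+1}'(G)}$ onto $\Bx{\PathRF{2k+1}(G)}$, where a coordinate is replaced by $\CN(S)$ for a carefully chosen union $S$). Until you supply an actual proof of carrier contractibility at this level of detail, the argument is a plan rather than a proof.
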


\begin{theorem}[Approximation]\label{thm:approximation}
 There exists a $\ZZ_2$-map from $\BBx{G}$ to $\BBx{H}$ if~and~only~if for some odd $k$, $\PathRF{k}(G)$ has a homomorphism to $H$.

\smallskip
 Moreover, if $G$ and $H$ have no loops, then for every $\ZZ_2$-map $f:\BBx{G} \to_{\ZZ_2} \BBx{H}$, there is an odd~$k$ and a homomorphism $\PathRF{k}(G) \to H$ that induces a map $\ZZ_2$-homotopic to $f \circ |p_k|$.
\end{theorem}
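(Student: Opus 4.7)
The ``if'' direction is immediate from functoriality: a homomorphism $h:\PathRF{k}(G)\to H$ gives a $\ZZ_2$-map $|h|:\BBx{\PathRF{k}(G)}\to_{\ZZ_2}\BBx{H}$, and by Theorem~\ref{thm:equiv} precomposing with a $\ZZ_2$-homotopy inverse of $|p_k|$ produces the desired $\ZZ_2$-map $\BBx{G}\to_{\ZZ_2}\BBx{H}$. For the nontrivial direction, given $f:\BBx{G}\to_{\ZZ_2}\BBx{H}$, I set $f_k := f\circ|p_k|:\BBx{\PathRF{k}(G)}\to_{\ZZ_2}\BBx{H}$ and aim to show that for some odd $k$ this is $\ZZ_2$-homotopic to a map $|g|$ induced by a graph homomorphism $g:\PathRF{k}(G)\to H$; this delivers both the original direction and the ``moreover'' clause at once.

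The main tool is the equivariant version of the simplicial approximation theorem: such a $g$ exists, with $|g|$ $\ZZ_2$-homotopic to $f_k$, provided the open-star cover of $\BBx{\PathRF{k}(G)}$ refines (via $f_k$) the open-star cover of $\BBx{H}$, in a $\ZZ_2$-compatible way. Since $\BBx{H}$ is a finite simplicial complex, a Lebesgue-number argument combined with continuity of $f$ reduces this to proving the following mesh-shrinking property: the sets $\bigl\{|p_k|(\st u) : u\in V(\BBx{\PathRF{k}(G)})\bigr\}$ form covers of $\BBx{G}$ whose mesh (in any fixed metric) tends to $0$ as $k\to\infty$. I expect this to be the combinatorial heart of the proof, and it should follow from the explicit description of $\PathRF{k}$ in Section~\ref{sec:prelims}: a vertex of $\PathRF{k}(G)$ encodes a length-$k$ ``walk-labelling'' structure in $G$, adjacent vertices encode compatible shifts of such structures, and $p_k$ extracts a single central vertex, so that the star of $u$ in $\BBx{\PathRF{k}(G)}$ maps under $|p_k|$ into an ever-smaller topological neighborhood of $p_k(u)$ in $\BBx{G}$.

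Once simplicial approximation is available, extracting a graph homomorphism is routine: vertices of a box complex are labeled by $V(\cdot)\times\{\circ,\bullet\}$ with the $\ZZ_2$-action swapping the two sides, and simplices correspond to labeled complete bipartite subgraphs. A simplicial $\ZZ_2$-map $g:\BBx{\PathRF{k}(G)}\to\BBx{H}$ therefore gives an equivariant vertex map whose one-sided restriction $V(\PathRF{k}(G))\to V(H)$ sends each edge, viewed as a $1$-simplex with endpoints on opposite $\ZZ_2$-sides, to a $1$-simplex of $\BBx{H}$, i.e.\ to an edge of $H$. The induced geometric realization agrees with $g$ and is thus $\ZZ_2$-homotopic to $f_k$ by construction. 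The principal obstacle is the mesh-shrinking estimate: this is where the intuitive picture of $\PathRF{k}$ as an ``iterated subdivision'' must be made quantitative from the combinatorial definition, and it is strictly more than what Theorem~\ref{thm:equiv} supplies, since that theorem records only the global homotopy type, not a local refinement of the simplicial structure.
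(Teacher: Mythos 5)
Your overall architecture (reduce to an equivariant simplicial-approximation argument via a Lebesgue-number/mesh estimate, then read off a homomorphism from a vertex map) is the same as the paper's, and the ``if'' direction is fine. But the step you identify as the combinatorial heart is stated in a form that is false: the sets $|p_k|(\st u)$ do \emph{not} have mesh tending to $0$. The homomorphism $p_k$ sends $(\{v\},A_1,\dots,A_\ell)\mapsto v$, so the closed star of a vertex $\vL{\tup{A}}$ with $A_0=\{v\}$ contains edges to vertices $\vR{\tup{B}}$ with $B_0=\{b\}$ for every $b\in A_1$, and $|p_k|$ maps such an edge onto the entire edge $\{\vL{v},\vR{b}\}$ of $\BBx{G}$ --- a set of diameter $\sqrt{2}$ for every $k$. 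The intuition that ``$p_k$ extracts a single central vertex, so the star maps into an ever-smaller neighborhood'' does not hold, because $|p_k|$ is a simplicial map onto the original (uncoarsened) triangulation of $\BBx{G}$ and cannot shrink anything. The paper's fix (Theorem~\ref{thm:technical}) is to replace $|p_k|$ by a $\ZZ_2$-\emph{homotopic} map $g$ that sends $\vL{\tup{A}}$ to an average of the barycenters of $\vL{A_0},\vR{A_1},\dots$; adjacent tuples share all but $O(D)$ of the $k$ links in their interleaved chains $A_0\subseteq B_1\subseteq A_2\subseteq\cdots$, which yields the diameter bound $O(D/k)$. Since the final homotopy statement is about $f\circ|p_k|$, the homotopy $g\simeq_{\ZZ_2}|p_k|$ must be produced explicitly; your proposal has no candidate for $g$.

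A second, smaller gap: extracting a graph homomorphism from the simplicial approximation is not routine. The open stars $\st\vL{v}$, $v\in V(H)$, do not cover $\BBx{H}$ (they miss the purely-$\bullet$ faces), and a simplicial $\ZZ_2$-map can collapse an edge $\{\vL{\tup{A}},\vR{\tup{B}}\}$ to a single vertex or send both endpoints to the same side of $\BBx{H}$, in which case no edge of $H$ is produced. The paper handles this by first deforming $f$ so that its image lies in the balanced subspace $X$ of $\BBx{H}$ (points with total weight $\tfrac12$ on each side), which $\emph{is}$ covered by the $\circ$-stars alone; approximating against that cover forces $\st\vL{h(\tup{A})}\cap\st\vR{h(\tup{B})}\neq\emptyset$ for adjacent $\tup{A},\tup{B}$, hence $h(\tup{A})h(\tup{B})\in E(H)$. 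Without this normalization your ``one-sided restriction'' step does not go through.
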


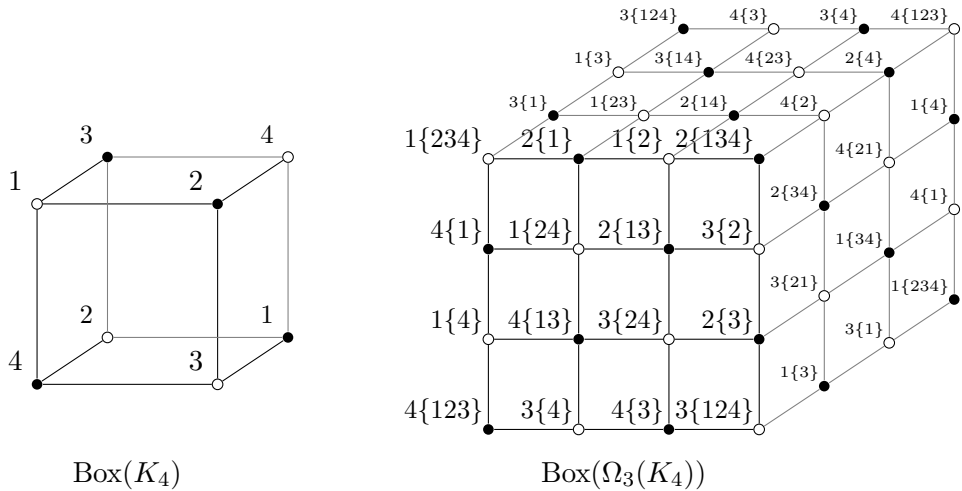
\begin{figure}[b!]
	\centering
	\vspace*{-10pt}
	\begin{tikzpicture}[scale=1.2]
\tikzset{ %
	L/.style={circle,draw=black,fill=white,inner sep=0pt,minimum size=4pt},
	R/.style={circle,fill=black,inner sep=0pt,minimum size=4pt},
	v/.style={circle,draw=black!75,inner sep=0pt,minimum size=8pt},
	h/.style={draw=black!50},
}
\begin{scope}[shift={(-5,0.5)}]
	\newcommand\zscale{1.3}
	\coordinate (z) at ($\zscale*(0.6,0.4)$);
	\node[L,label={135:$1$}] (v1L) at (0,2) {};
	\node[R,label={135:$2$}] (v2R) at (2,2) {};
	\node[L,label={135:$3$}] (v3L) at (2,0) {};
	\node[R,label={135:$4$}] (v4R) at (0,0) {};
	\draw (v1L)--(v2R)--(v3L)--(v4R)--(v1L);
	\node[R,label={135:\small$1$}] (v1R) at ($(2,0)+(z)$) {};
	\node[L,label={135:\small$2$}] (v2L) at ($(0,0)+(z)$) {};
	\node[R,label={135:\small$3$}] (v3R) at ($(0,2)+(z)$) {};
	\node[L,label={135:\small$4$}] (v4L) at ($(2,2)+(z)$) {};
	\draw[h] (v1R)--(v2L)--(v3R)--(v4L)--(v1R);
	\draw (v1L)--(v3R) (v2R)--(v4L) (v3L)--(v1R) (v4R)--(v2L);
	\node at (1,-1) {$\Bx{K_4}$};
\end{scope}

	\node at (1.5,-0.5) {$\Bx{\PathRF{3}(K_4)}$};
	\newcommand\zscale{1.2}
	\coordinate (z) at ($\zscale*(0.6,0.4)$);
	\node[L,label={[label distance=-5pt]135:\small$1\{234\}$}] (v1L) at (0,3) {};
	\node[R,label={[label distance=-5pt]135:\small$2\{134\}$}] (v2R) at (3,3) {};
	\node[L,label={[label distance=-5pt]135:\small$3\{124\}$}] (v3L) at (3,0) {};
	\node[R,label={[label distance=-5pt]135:\small$4\{123\}$}] (v4R) at (0,0) {};
	\node[R,label={[label distance=-5pt]135:\small$2\{1\}$}] (v21R) at (1,3) {};
	\node[L,label={[label distance=-5pt]135:\small$1\{2\}$}] (v12L) at (2,3) {};
	\node[L,label={[label distance=-5pt]135:\small$3\{2\}$}] (v32L) at (3,2) {};
	\node[R,label={[label distance=-5pt]135:\small$2\{3\}$}] (v23R) at (3,1) {};
	\node[R,label={[label distance=-5pt]135:\small$4\{3\}$}] (v43R) at (2,0) {};
	\node[L,label={[label distance=-5pt]135:\small$3\{4\}$}] (v34L) at (1,0) {};
	\node[L,label={[label distance=-5pt]135:\small$1\{4\}$}] (v14L) at (0,1) {};
	\node[R,label={[label distance=-5pt]135:\small$4\{1\}$}] (v41R) at (0,2) {};
	\draw (v1L)--(v21R)--(v12L)--(v2R)--(v32L)--(v23R)--(v3L)--(v43R)--(v34L)--(v4R)--(v14L)--(v41R)--(v1L);
	\node[L,label={[label distance=-5pt]135:\small$1\{24\}$}] (v124L) at (1,2) {};
	\node[R,label={[label distance=-5pt]135:\small$2\{13\}$}] (v213R) at (2,2) {};
	\node[L,label={[label distance=-5pt]135:\small$3\{24\}$}] (v324L) at (2,1) {};
	\node[R,label={[label distance=-5pt]135:\small$4\{13\}$}] (v413R) at (1,1) {};
	\draw (v124L)--(v213R)--(v324L)--(v413R)--(v124L);
	\draw (v41R)--(v124L)--(v21R) (v12L)--(v213R)--(v32L) (v23R)--(v324L)--(v43R) (v34L)--(v413R)--(v14L);
	\node[R,label={[label distance=-5pt]135:\tiny$3\{1\}$}] (v31R) at ($(v1L)+(z)$) {};
	\node[L,label={[label distance=-5pt]135:\tiny$1\{3\}$}] (v13L) at ($(v1L)+2*(z)$) {};
	\node[R,label={[label distance=-5pt]135:\tiny$3\{124\}$}] (v3R) at ($(v1L)+3*(z)$) {};
	\draw[h] (v1L)--(v31R)--(v13L)--(v3R);
	\node[L,label={[label distance=-5pt]135:\tiny$4\{2\}$}] (v42L) at ($(v2R)+(z)$) {};
	\node[R,label={[label distance=-5pt]135:\tiny$2\{4\}$}] (v24R) at ($(v2R)+2*(z)$) {};
	\node[L,label={[label distance=-5pt]135:\tiny$4\{123\}$}] (v4L) at ($(v2R)+3*(z)$) {};
	\draw[h] (v2R)--(v42L)--(v24R)--(v4L);
	\node[R,label={[label distance=-5pt]135:\tiny$1\{3\}$}] (v13R) at ($(v3L)+(z)$) {};
	\node[L,label={[label distance=-5pt]135:\tiny$3\{1\}$}] (v31L) at ($(v3L)+2*(z)$) {};
	\node[R,label={[label distance=-5pt]135:\tiny$1\{234\}$}] (v1R) at ($(v3L)+3*(z)$) {};
	\draw[h] (v3L)--(v13R)--(v31L)--(v1R);
	\node[L,label={[label distance=-5pt]135:\tiny$1\{23\}$}] (v123L) at ($(v21R)+(z)$) {};
	\node[R,label={[label distance=-5pt]135:\tiny$3\{14\}$}] (v314R) at ($(v21R)+2*(z)$) {};
	\node[L,label={[label distance=-5pt]135:\tiny$4\{3\}$}] (v43L) at ($(v21R)+3*(z)$) {};
	\draw[h] (v21R)--(v123L)--(v314R)--(v43L);
	\node[R,label={[label distance=-5pt]135:\tiny$2\{14\}$}] (v214R) at ($(v12L)+(z)$) {};
	\node[L,label={[label distance=-5pt]135:\tiny$4\{23\}$}] (v423L) at ($(v12L)+2*(z)$) {};
	\node[R,label={[label distance=-5pt]135:\tiny$3\{4\}$}] (v34R) at ($(v12L)+3*(z)$) {};
	\draw[h] (v12L)--(v214R)--(v423L)--(v34R);
	\node[L,label={[label distance=-5pt]135:\tiny$3\{21\}$}] (v321L) at ($(v23R)+(z)$) {};
	\node[R,label={[label distance=-5pt]135:\tiny$1\{34\}$}] (v134R) at ($(v23R)+2*(z)$) {};
	\node[L,label={[label distance=-5pt]135:\tiny$4\{1\}$}] (v41L) at ($(v23R)+3*(z)$) {};
	\draw[h] (v23R)--(v321L)--(v134R)--(v41L);
	\node[R,label={[label distance=-5pt]135:\tiny$2\{34\}$}] (v234R) at ($(v32L)+(z)$) {};
	\node[L,label={[label distance=-5pt]135:\tiny$4\{21\}$}] (v421L) at ($(v32L)+2*(z)$) {};
	\node[R,label={[label distance=-5pt]135:\tiny$1\{4\}$}] (v14R) at ($(v32L)+3*(z)$) {};
	\draw[h] (v32L)--(v234R)--(v421L)--(v14R);
	\draw[h] (v31R)--(v123L)--(v214R)--(v42L);
	\draw[h] (v13L)--(v314R)--(v423L)--(v24R);
	\draw[h] (v3R)--(v43L)--(v34R)--(v4L);
	\draw[h] (v13R)--(v321L)--(v234R)--(v42L);
	\draw[h] (v31L)--(v134R)--(v421L)--(v24R);
	\draw[h] (v1R)--(v41L)--(v14R)--(v4L);
\end{tikzpicture}
	\vspace*{-5pt}
	\caption{The box complex of the clique graph $K_4$ and of $\PathRF{3}(K_4)$.
	As $G$ becomes $G \times K_2$, for each vertex $v$ of the graph there are two vertices $\vL{v}$ and $\vR{v}$ in the complex.
	Faces are glued to each 4-cycle, making both complexes equivalent to the (hollow) sphere.
	The vertex $(\{v\},\{v_1,v_2,\dots\})$ of $\PathRF{3}(K_4)$ is labeled $v\{v_1 v_2\dots\}$ for short.
	(A careful reader may note that the definition of $\Bx{}$ includes also faces corresponding to each vertex neighborhood, such as the tetrahedron $\{\vL{1},\vR{2},\vR{3},\vR{4}\}$, but it can be shown that these can be collapsed). }
	\vspace*{-2em}
	\label{fig:K4}
\end{figure}

Csorba~\cite{Csorba07} and \v{Z}ivaljevi\'{c}~\cite{Zivaljevic05} independently gave a construction showing that any simplicial complex (with a free $\ZZ_2$-action) is $\ZZ_2$-homotopy equivalent to some box complex (see~\cite{Csorba08} for a simpler proof, see also a generalization to actions of groups other than $\ZZ_2$ and to complexes of homomorphisms in~\cite{Dochtermann12}).

\begin{theorem}[Universality, \cite{Csorba07,Zivaljevic05}]\label{thm:universality}
	For every free $\ZZ_2$-space $X$, there is a graph $G$ such that $X$ and $\BBx{G}$ are $\ZZ_2$-homotopy equivalent.
\end{theorem}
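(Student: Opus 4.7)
The plan is to prove universality by explicitly constructing a graph $G = G(X)$ from the free $\ZZ_2$-simplicial complex $X$ such that $\BBx{G}$ is $\ZZ_2$-homotopy equivalent to $X$, following the general approach of Csorba~\cite{Csorba07} and \v{Z}ivaljevi\'{c}~\cite{Zivaljevic05}. Let $\nu$ denote the free involution on $X$.

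\textbf{Construction.} After replacing $X$ by a sufficiently fine equivariant subdivision (e.g.\ an iterated barycentric subdivision), I would define a graph $G$ with vertex set $V(G) := V(X)$ and edge set $E(G) := \{\{u,v\} : u \neq v,\ \{u, \nu(v)\} \in X\}$. This relation is symmetric because $\nu$ preserves simplices and is an involution, and the freeness of $\nu$ on $V(X)$ ensures that $G$ is a well-defined simple graph. The passage to a subdivision is essential: very simple examples (already the square $C_4$ viewed as a $\ZZ_2$-complex with the antipodal action) show that applying the naive construction directly to $X$ can collapse too much structure, but on a sufficiently refined subdivision the flag-like structure of chains makes the combinatorics cooperate.

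\textbf{Equivariant comparison.} Define a map $\phi: \BBx{G} \to |X|$ on vertices by $\phi(\vL{v}) := v$ and $\phi(\vR{v}) := \nu(v)$; this is $\ZZ_2$-equivariant by construction, intertwining the canonical swap $\vL{v} \leftrightarrow \vR{v}$ with $\nu$. A simplex $(A,B)$ of $\Bx{G}$ encodes a complete bipartite subgraph $A \times B \subseteq E(G)$, which by our definition amounts to the condition that $\{a, \nu(b)\} \in X$ for every $a \in A$ and $b \in B$. On a sufficiently refined subdivision, this pairwise compatibility implies that $A \cup \nu(B)$ is itself contained in a simplex of $X$, so $\phi$ extends to an honest simplicial $\ZZ_2$-map.

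\textbf{Main obstacle.} The heart of the argument is verifying that $\phi$ is in fact a $\ZZ_2$-homotopy equivalence, not merely an equivariant map. I would proceed via an equivariant nerve lemma: cover $|X|$ by the open stars $\st(\sigma)$ of simplices $\sigma \in X$, and show that each preimage $\phi^{-1}(\st(\sigma))$ is contractible by exhibiting a canonical cone point (drawn, for example, from a maximal simplex of $X$ containing $\sigma$, paired with its $\nu$-image). Since both the cover and these cones are compatible with the $\ZZ_2$-actions, and both actions are free, the resulting ordinary homotopy equivalence upgrades to an equivariant one — for instance via the general principle that a weak equivalence between free $\ZZ_2$-CW complexes is a $\ZZ_2$-homotopy equivalence. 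This contractibility-of-fibres step is where I expect the real work to lie: it requires the local combinatorics of bipartite subgraphs in $G$ to faithfully reflect the simplex structure of $X$, which is precisely what forces the use of a subdivision in the construction.
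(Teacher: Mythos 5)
First, a point of reference: the paper does not prove this theorem at all --- it is quoted from Csorba and \v{Z}ivaljevi\'{c} --- so there is no in-paper argument to compare against, and your sketch has to be judged against the known constructions.

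There is a genuine gap in your ``Equivariant comparison'' step, and it is located exactly where you place the construction's cleverness. In your graph $G$, a simplex $\sigma$ of $\Bx{G}$ with parts $A=\vvL{\sigma}$ and $B=\vvR{\sigma}$ records only the \emph{cross} conditions $\{a,\nu(b)\}\in X$ for $a\in A$, $b\in B$; it records nothing about pairs inside $A$ or inside $\nu(B)$. Passing to a barycentric subdivision does make $X$ a flag complex, but flagness requires \emph{all} pairs of $A\cup\nu(B)$ to span edges, so pairwise cross-compatibility does not force $A\cup\nu(B)$ into a simplex. Concretely, already the low-dimensional skeleton breaks: $\{\vL{u},\vL{u'}\}$ is a simplex of $\Bx{G}$ whenever $u,u'$ have a common neighbour in $G$, yet $\{u,u'\}$ need not be a simplex of $X$. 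For $X=C_{4k}$ (a triangulated $\Sphere^1$ with the antipodal action) and $u,u'$ the two neighbours of a vertex $\nu(b)$, the set $\{\vL{u},\vL{u'},\vR{b}\}$ is a $2$-simplex of $\Bx{G}$ with no corresponding simplex in the $1$-dimensional complex $X$; no amount of further subdivision removes it. Hence $\phi$ is not well defined as a linear extension, and the nerve-lemma step has nothing to apply to.

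This is precisely why the published constructions do not take the vertices of a fine subdivision as $V(G)$: Csorba takes the \emph{simplices} of $X$ (the vertices of the barycentric subdivision) as vertices of $G$, with adjacency expressing comparability of $\sigma$ and $\nu(\tau)$ in the face poset, so that complete bipartite subgraphs correspond to chains, i.e.\ to simplices of the subdivision; the remaining ``neighbourhood'' simplices of the box complex (the analogues of the tetrahedra mentioned in the caption of Figure~1) are then shown to be collapsible. To repair your argument you would need either to switch to such an order-theoretic adjacency, or to prove separately that every simplex of $\Bx{G}$ not coming from a simplex of $X$ can be $\ZZ_2$-collapsed --- neither of which your sketch addresses. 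The final upgrading step you invoke (a homotopy equivalence between free $\ZZ_2$-complexes promotes to a $\ZZ_2$-homotopy equivalence) is fine in principle, but it only becomes relevant once a well-defined comparison map exists.
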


Together, these three theorems show that the homotopy theory of $\ZZ_2$-spaces is largely reflected in graphs, with $\PathRF{k}$ functors as the connection. (Equivalently, in all of our results, `for some odd $k$' can be replaced by `for large enough odd $k$' and  $\PathRF{k}$ by iterations $\PathRF{3}(\dots(\PathRF{3}(G))\dots)$ of $\PathRF{3}$).

The existence of some sequence of functors which satisfy the above Equivalence and Approximation Theorems already follows from the work of Dochtermann and Schultz~\cite[Proposition 4.7]{Dochtermann12}.
Essentially, the idea is to go to the box complex, apply barycentric subdivision iteratively, and return to graphs with the construction from the Universality Theorem.
The construction is however ad-hoc and tedious to describe directly, it cannot be described as iterating a single functor, and it is not clear whether the resulting graph functors admit left adjoints.

One application of the Equivalence Theorem, for $\PathRF{k}$ functors specifically, is that it immediately implies
the result of Gy{\'{a}}rf{\'{a}}s et~al.~\cite{GyarfasJS04}: since the box complex of $\PathRF{k}(K_n)$ has the same homotopy type as that of $K_n$, it is not $(n-1)$-colorable (by the Borsuk-Ulam theorem, as explained above). \looseness=-1
It is then easy to check that it is in fact an $n$-chromatic graph whose $k$-th power is still only $n$-chromatic (in particular it has no loops, so $\PathRF{k}(K_n)$ has no odd cycle of length $\leq k$).
The chromatic number of $\PathRF{k}(K_n)$ (as a ``universal graph for wide colorings'') has also been determined in~\cite{SimonyiT06} and~\cite{BaumS05}.
More generally, for any graph $K$ without loops, $\Omega_k(K)$ gives a graph with a $\ZZ_2$-homotopy equivalent box complex, but with arbitrarily high odd girth (no odd cycles of length $\leq k$).

\subsection*{Hedetniemi's conjecture and multiplicative graphs}
More than 50 year ago, Hedetniemi~\cite{hedetniemi1966homomorphisms} conjectured the following: $\chi(G\times H) = \min(\chi(G),\chi(H))$, for any graphs $G,H$. (Here $\times$ is the \emph{tensor}, or \emph{categorical} product and $\chi$ is the chromatic number, see Section~\ref{sec:prelims} for definitions).
Despite the simplicity of the statement, very little is known: one of the strongest results is a proof by El-Zahar and Sauer~\cite{El-ZaharS85} that the conjecture is true when $G\times H$ is 3-colorable.\footnote{Since the direction $\chi(G\times H) \leq \min(\chi(G),\chi(H))$ is trivial, it follows that the conjecture is also true for all $4$-colorable graphs $G,H$, as the title of~\cite{El-ZaharS85} indicates. This statement may be a bit misleading however, as no insight is gained into the existence of 4-colorings.}
See~\cite{zhu1998survey,Sauer01,tardif2008hedetniemi} for surveys.

The strong connection between graphs and topology, together with the fact that the functors $\PathRF{k}$ commute with the product (which follows from them being right adjoints, see Lemma~\ref{lem:funcproduct}), allows us to show that Hedetniemi's conjecture implies an analogous statement in topology (recall that $\Sphere^d$ denotes the $d$-dimensional sphere with antipodal $\ZZ_2$-action). 
This has recently been shown independently by Matsushita~\cite{Matsushita17}:

\begin{theorem}\label{thm:hedetImplies}
Suppose Hedetniemi's conjecture is true. Then $X \times Y \to_{\ZZ_2} \Sphere^d$ implies $X\to_{\ZZ_2} \Sphere^d$ or $Y\to_{\ZZ_2} \Sphere^d$, for any $\ZZ_2$-spaces $X,Y$ and any integer $d$.
\end{theorem}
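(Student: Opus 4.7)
My plan is to translate the topological hypothesis to a chromatic inequality via the box complex, invoke Hedetniemi's conjecture, and translate back, with the functors $\PathRF{k}$ acting as the bridge between homomorphisms and $\ZZ_2$-maps.

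I would first reduce to the case where both actions are free. If $X$ has a fixed point $x_0$, then for any $\ZZ_2$-map $F : X \times Y \to_{\ZZ_2} \Sphere^d$, the assignment $y \mapsto F(x_0, y)$ is itself a $\ZZ_2$-map $Y \to_{\ZZ_2} \Sphere^d$, since $F(x_0, \nu_Y y) = F(\nu_X x_0, \nu_Y y) = -F(x_0, y)$; the symmetric case is identical, and then we are done. So assume both $X$ and $Y$ are free, and apply the Universality Theorem to pick loopless graphs $G, H$ with $X \simeq_{\ZZ_2} \BBx{G}$ and $Y \simeq_{\ZZ_2} \BBx{H}$ (looplessness is automatic, since a loop in $G$ would produce a fixed point of the $\ZZ_2$-action on $\BBx{G}$).

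Functoriality of $\Bx{\cdot}$ applied to the two product projections $G \times H \to G$ and $G \times H \to H$, combined with the universal property of products of $\ZZ_2$-spaces, produces a canonical $\ZZ_2$-map $\BBx{G \times H} \to_{\ZZ_2} \BBx{G} \times \BBx{H}$. Composing with the hypothesised map and the standard identification $\BBx{K_{d+2}} \zeq \Sphere^d$ gives
\[
\BBx{G \times H} \to_{\ZZ_2} \BBx{G} \times \BBx{H} \zeq X \times Y \to_{\ZZ_2} \Sphere^d \zeq \BBx{K_{d+2}}.
\]
The Approximation Theorem then yields an odd $k$ and a homomorphism $\PathRF{k}(G \times H) \to K_{d+2}$. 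Since $\PathRF{k}$ is a right adjoint it preserves categorical products (Lemma~\ref{lem:funcproduct}), so $\PathRF{k}(G \times H) = \PathRF{k}(G) \times \PathRF{k}(H)$ and therefore $\chi\bigl(\PathRF{k}(G) \times \PathRF{k}(H)\bigr) \leq d+2$.

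Applying Hedetniemi's conjecture now gives $\PathRF{k}(G) \to K_{d+2}$ or $\PathRF{k}(H) \to K_{d+2}$; without loss of generality the former, so $\BBx{\PathRF{k}(G)} \to_{\ZZ_2} \BBx{K_{d+2}} \zeq \Sphere^d$. By the Equivalence Theorem $\BBx{G} \zeq \BBx{\PathRF{k}(G)}$, hence $X \zeq \BBx{G} \to_{\ZZ_2} \Sphere^d$, as required. The two points that deserve care are (i)~the construction of the $\ZZ_2$-map $\BBx{G \times H} \to_{\ZZ_2} \BBx{G} \times \BBx{H}$ — automatic from functoriality but worth writing out — and (ii)~the fact that $\PathRF{k}$ preserves products, a formal consequence of being a right adjoint which must nevertheless be interpreted correctly in the thin-categorical setting of the paper. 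Everything else is a clean three-step translation bracketing the single invocation of Hedetniemi at $n = d+2$.
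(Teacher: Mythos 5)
Your proof is correct and follows essentially the same route as the paper's, which proves the more general Theorem~\ref{thm:multImplies} and specializes $K=K_{d+2}$: Universality, compatibility of $\Bx{\cdot}$ with products, the Approximation Theorem, Hedetniemi's conjecture at $n=d+2$ (i.e.\ multiplicativity of $K_{d+2}$), and the Equivalence Theorem to return. Your two refinements --- disposing of non-free spaces by evaluating the given map at a fixed point, and using only the canonical $\ZZ_2$-map $\BBx{G\times H}\to_{\ZZ_2}\BBx{G}\times\BBx{H}$ obtained from the projections rather than the full equivalence of Lemma~\ref{lem:boxproduct} --- are both sound, and the first in fact tidies up a point the paper's proof glosses over, since the Universality Theorem as stated applies only to free $\ZZ_2$-spaces.
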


Matsushita in fact adapts the box complex construction, the functors of Dochtermann and Schultz~\cite{Dochtermann12}, and the construction of Csorba~\cite{Csorba07}, to give a particularly elegant connection between the category of graphs and the category of $\ZZ_2$-spaces in the form of adjoint functors preserving finite limits, from which the statement readily follows. 
While the approach in this paper does not give such a graceful connection, the author finds it surprising that the most important topological conclusions can also be made using more natural graph functors $\PathRF{k}$, which have already proven to be useful for purely combinatorial theorems. 
Our methods do not give any stronger results than Matsushita's (except maybe for Theorem~\ref{thm:characterization}, where the appearance of $\PathRF{k}$ will make the statement more meaningful as a combinatorial characterization), but we comment more on the implications of Hedetniemi's conjecture and further argue on the importance of topological approaches.

\bigskip
Hedetniemi's conjecture is particularly appealing when $n$-colorings are seen as homomorphisms into $K_n$.
One may more generally consider \emph{multiplicative graphs}:\marginpar{\footnotesize{multipl.\\\hspace*{1em}graph}} a graph $K$ is called multiplicative when $G \times H \to K$ implies $G \to K$ or $H\to K$, for all graphs $G,H$.
Hedetniemi's conjecture is then that all clique graphs $K_n$ are multiplicative.
El-Zahar and Sauer's~\cite{El-ZaharS85} result amounts to saying that $K_3$ is multiplicative.
This has been generalized to odd cycles by H{\"{a}}ggkvist et al.~\cite{HaggkvistHMN88}, to circular cliques $K_{p/q}$ with $p/q<4$ by Tardif~\cite{Tardif05} (using iterations of the $\PathCF{3}$ and $\PathRF{3}$ functors) and to all graphs with no 4-cycles by the author~\cite{Wrochna17} (using the box complex).
A crucial step of Tardif's proof is the fact that a graph $K$ is multiplicative if and only if $\PathRF{3}(K)$ is.

We can analogously\marginpar{\footnotesize{multipl.\\\hspace*{0.5em}$\ZZ_2$-space}} define a $\ZZ_2$-space $Z$ to be \emph{multiplicative} when $X \times Y \to_{\ZZ_2} Z$ implies $X \to_{\ZZ_2} Z$ or $Y\to_{\ZZ_2} Z$, for all $\ZZ_2$-spaces $X,Y$.
Since the box complex of the clique $\BBx{K_n}$ is ($\ZZ_2$-homotopy equivalent to) the $(n-2)$-dimensional sphere $\Sphere^{n-2}$, Theorem~\ref{thm:hedetImplies} is a special case of the following theorem:

\begin{theorem}\label{thm:multImplies}
Let $K$ be a multiplicative graph. Then $\BBx{K}$ is a multiplicative $\ZZ_2$-space.
\end{theorem}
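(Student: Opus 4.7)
The plan is to reduce the topological statement to its graph-theoretic analogue---the multiplicativity of $K$---by chaining together the three main theorems. Suppose $X \times Y \to_{\ZZ_2} \BBx{K}$ for some $\ZZ_2$-spaces $X,Y$. First I dispose of the non-free case: if $X$ admits a $\ZZ_2$-fixed point $x_0$, then $y \mapsto (x_0,y)$ is a $\ZZ_2$-map $Y \to X \times Y$ (equivariance uses $\nu_X(x_0)=x_0$), whose composition with the given map yields $Y \to_{\ZZ_2} \BBx{K}$, and we are done. So I may assume both $X$ and $Y$ are free. By the Universality Theorem (Theorem~\ref{thm:universality}) there are then loopless graphs $G,H$ with $\BBx{G}\zeq X$ and $\BBx{H} \zeq Y$, so the hypothesis rewrites as $\BBx{G}\times\BBx{H} \to_{\ZZ_2} \BBx{K}$.

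Next I move from the topological product $\BBx{G}\times\BBx{H}$ to a single box complex. The graph projections $G\times H \to G$ and $G\times H \to H$ are homomorphisms, hence by functoriality of $\BBx{}$ induce $\ZZ_2$-maps $\BBx{G\times H} \to \BBx{G}$ and $\BBx{G\times H} \to \BBx{H}$. By the universal property of the categorical product of $\ZZ_2$-spaces (equipped with the diagonal action), these assemble into a single $\ZZ_2$-map $\BBx{G\times H} \to \BBx{G}\times\BBx{H}$, whose composition with the map of the previous step gives $\BBx{G\times H} \to_{\ZZ_2} \BBx{K}$.

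Now the Approximation Theorem (Theorem~\ref{thm:approximation}) produces an odd $k$ and a homomorphism $\PathRF{k}(G\times H) \to K$. Since $\PathRF{k}$ is a right adjoint it commutes with products (Lemma~\ref{lem:funcproduct}), so $\PathRF{k}(G)\times\PathRF{k}(H) \to K$ as well. Multiplicativity of $K$ then yields $\PathRF{k}(G) \to K$ or $\PathRF{k}(H) \to K$; say the former. This induces $\BBx{\PathRF{k}(G)} \to_{\ZZ_2} \BBx{K}$, and combining with the Equivalence Theorem (Theorem~\ref{thm:equiv}), which gives $\BBx{\PathRF{k}(G)} \zeq \BBx{G} \zeq X$, one concludes $X \to_{\ZZ_2} \BBx{K}$, as required.

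The delicate point is the one-sided passage $\BBx{G\times H} \to \BBx{G}\times\BBx{H}$: these two $\ZZ_2$-spaces are in general \emph{not} $\ZZ_2$-homotopy equivalent, so this step genuinely loses information. What saves the argument is that only this single direction is needed---to transfer existence of a $\ZZ_2$-map into $\BBx{K}$ from a topological product to a single box complex, which is precisely the input format required by the Approximation Theorem---and that this direction always exists, arising naturally from the projections of the categorical graph product. This asymmetry is exactly what makes the proof compatible with the asymmetric notion of multiplicativity.
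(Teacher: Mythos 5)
Your proof is correct and follows essentially the same route as the paper's: Universality to realize $X,Y$ as box complexes, passage to $\BBx{G\times H}$, the Approximation Theorem in both directions, commutation of $\PathRF{k}$ with products, and multiplicativity of $K$. Two remarks. First, your explicit treatment of the non-free case is a welcome addition: the paper invokes the Universality Theorem~\ref{thm:universality} directly, which formally requires $X$ and $Y$ to be free. Second, where you construct only the $\ZZ_2$-map $\BBx{G\times H}\to_{\ZZ_2}\BBx{G}\times\BBx{H}$ from the two projections, the paper instead cites Lemma~\ref{lem:boxproduct}, which asserts the full $\ZZ_2$-homotopy equivalence $\BBx{G}\times\BBx{H}\zeq\BBx{G\times H}$; your closing claim that these spaces are ``in general not $\ZZ_2$-homotopy equivalent'' is therefore false, although this does not affect your argument, which is sound (and marginally more self-contained) precisely because it never uses the reverse direction.
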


In other words, this means Hedetniemi's conjecture implies the following:

\begin{conjecture}\label{conj:main}
	All spheres $\Sphere^d$ are multiplicative.
\end{conjecture}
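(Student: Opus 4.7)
The plan is to derive Conjecture~\ref{conj:main} as an application of the preceding Theorem~\ref{thm:multImplies}, for which I would thread the Universality, Approximation, and Equivalence theorems together, using $\PathRF{k}$ as the bridge that turns the topological multiplicativity question into a graph-homomorphism question. Let $K$ be a multiplicative graph and $f \colon X \times Y \to_{\ZZ_2} \BBx{K}$ be given; the goal is $X \to_{\ZZ_2} \BBx{K}$ or $Y \to_{\ZZ_2} \BBx{K}$. The first step is to replace the $\ZZ_2$-spaces $X$ and $Y$ by box complexes of graphs: since $\BBx{K}$ is a free $\ZZ_2$-space and $f$ must land in its free part, one may pass to suitable free $\ZZ_2$-CW models of $X, Y$ and apply the Universality Theorem to obtain graphs $G, H$ with $X \simeq_{\ZZ_2} \BBx{G}$ and $Y \simeq_{\ZZ_2} \BBx{H}$.

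I would then use the canonical $\ZZ_2$-map $\BBx{G \times H} \to_{\ZZ_2} \BBx{G} \times \BBx{H}$ (with diagonal action on the product) induced by the two tensor-product projections: a complete bipartite subgraph of $G \times H$ projects coordinatewise to complete bipartite subgraphs of $G$ and $H$, and the swap is coordinatewise. Composition with $f$ produces $\BBx{G \times H} \to_{\ZZ_2} \BBx{K}$, so the Approximation Theorem yields an odd $k$ and a graph homomorphism $\PathRF{k}(G \times H) \to K$. Since $\PathRF{k}$ is a right adjoint to $\PathCF{k}$ (Lemma~\ref{lem:funcproduct}), it preserves products, giving $\PathRF{k}(G \times H) \cong \PathRF{k}(G) \times \PathRF{k}(H)$; multiplicativity of $K$ then forces $\PathRF{k}(G) \to K$ or $\PathRF{k}(H) \to K$. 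Passing these through the box complex functor and invoking the Equivalence Theorem to identify $\BBx{\PathRF{k}(G)} \simeq_{\ZZ_2} \BBx{G} \simeq_{\ZZ_2} X$ (and similarly for $H, Y$), we conclude $X \to_{\ZZ_2} \BBx{K}$ or $Y \to_{\ZZ_2} \BBx{K}$, establishing Theorem~\ref{thm:multImplies}. Conjecture~\ref{conj:main} then drops out as the special case $K = K_n$, using $\BBx{K_n} \simeq_{\ZZ_2} \Sphere^{n-2}$ and Hedetniemi's conjecture to supply multiplicativity of $K_n$.

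The step I expect to require the most care is the opening reduction to free $\ZZ_2$-spaces, because the Universality Theorem presupposes freeness while the definition of a multiplicative $\ZZ_2$-space is stated for arbitrary $X, Y$. Any $\ZZ_2$-map into the fixed-point-free space $\BBx{K}$ must avoid the diagonal fixed set $\mathrm{Fix}(X) \times \mathrm{Fix}(Y)$ of $X \times Y$, but excising that set does not obviously yield a product of free complexes, so some form of $\ZZ_2$-CW or simplicial approximation is needed to obtain two factors $G, H$ to feed into the rest of the argument. Everything downstream is either a direct invocation of a cited theorem or a short categorical manipulation.
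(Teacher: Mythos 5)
There is a genuine gap, and it is at the very last step: the statement you are asked to prove is Conjecture~\ref{conj:main}, which the paper states as an \emph{open conjecture}, and your argument closes it only by ``using \dots Hedetniemi's conjecture to supply multiplicativity of $K_n$.'' Hedetniemi's conjecture is not a theorem, so what you have written is a proof of the implication ``Hedetniemi $\Rightarrow$ all spheres are multiplicative,'' i.e.\ of Theorems~\ref{thm:hedetImplies} and~\ref{thm:multImplies}, not of the conjecture itself. The paper is explicit about this status: it derives Conjecture~\ref{conj:main} as a \emph{consequence} of Hedetniemi's conjecture, and in the ``Open questions'' discussion it states that the multiplicativity of $\Sphere^d$ for $d\geq 2$ is unknown and that there is ``no serious reason to believe it to be true.'' (Indeed, Hedetniemi's conjecture has since been refuted by Shitov, so this particular route to the spheres is definitively unavailable.) The only sphere cases the paper proves unconditionally are $d=0$ and $d=1$ (Lemmas~\ref{lem:S0} and~\ref{lem:S1}), and those proofs are genuinely topological --- connected components for $\Sphere^0$, winding numbers and coincidence points for $\Sphere^1$ --- rather than reductions to graph multiplicativity.

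That said, the unconditional part of your argument is correct and matches the paper's Section~\ref{sec:multproof} almost verbatim: Universality to replace $X,Y$ by $\BBx{G},\BBx{H}$; the map $\BBx{G\times H}\to_{\ZZ_2}\BBx{G}\times\BBx{H}$ (Lemma~\ref{lem:boxproduct}); Approximation to get $\PathRF{k}(G\times H)\to K$; $\PathRF{k}(G)\times\PathRF{k}(H)\leftrightarrow\PathRF{k}(G\times H)$ (homomorphic equivalence, not isomorphism, but that suffices); multiplicativity of $K$; and Equivalence plus the converse direction of Approximation to return to $X$ or $Y$. Your worry about freeness in the Universality step is reasonable but is a side issue the paper also glosses over (any component of $X\times Y$ mapping $\ZZ_2$-equivariantly into the free space $\BBx{K}$ forces the relevant parts of $X$ and $Y$ to be handled as free complexes). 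None of this, however, removes the dependence on Hedetniemi's conjecture in the final step, so the conjecture remains unproved.
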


We do not know if the converse is true. However, from the multiplicativity of a $\ZZ_2$-space we can deduce a weaker statement, which can be seen as a relaxation of graph multiplicativity, and a combinatorial characterization of multiplicative spaces:
\begin{theorem}\label{thm:characterization}
	Let $Z$ be a $\ZZ_2$-space and let $K$ be a graph such that $\BBx{K} \zeq Z$.
	Then $Z$~is multiplicative if and only if: for all graphs $G,H$,\ $G \times H \to K$ implies that for some odd $k$, $\PathRF{k}(G) \to K$ or $\PathRF{k}(H) \to K$.
\end{theorem}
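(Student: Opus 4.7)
The plan is to prove both directions by translating between graphs and $\ZZ_2$-spaces via the Equivalence (Theorem~\ref{thm:equiv}), Approximation (Theorem~\ref{thm:approximation}), and Universality (Theorem~\ref{thm:universality}) theorems, together with two auxiliary ingredients: natural $\ZZ_2$-maps $\BBx{G\times H}\to_{\ZZ_2}\BBx{G}\times \BBx{H}$ (induced by the two graph projections) and, in the opposite direction, $\BBx{G}\times \BBx{H}\to_{\ZZ_2}\BBx{G\times H}$ (requiring a concrete simplicial construction, not formal); and product preservation $\PathRF{k}(G\times H)\cong \PathRF{k}(G)\times \PathRF{k}(H)$, since $\PathRF{k}$ is a right adjoint (Lemma~\ref{lem:funcproduct}).

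For the forward implication, assume $Z$ is multiplicative and $G\times H\to K$. Taking box complexes and pre-composing with the $\ZZ_2$-map $\BBx{G}\times \BBx{H}\to_{\ZZ_2}\BBx{G\times H}$ yields $\BBx{G}\times \BBx{H}\to_{\ZZ_2}\BBx{K}\zeq Z$. Multiplicativity of $Z$ then produces a $\ZZ_2$-map from one factor, say $\BBx{G}\to_{\ZZ_2} Z\zeq \BBx{K}$, and the Approximation Theorem supplies an odd $k$ with $\PathRF{k}(G)\to K$; the case of $\BBx{H}\to_{\ZZ_2} Z$ is symmetric.

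For the backward implication, assume the combinatorial condition and let $X\times Y\to_{\ZZ_2} Z$. By Universality choose graphs $G,H$ with $\BBx{G}\zeq X$ and $\BBx{H}\zeq Y$; then $\BBx{G}\times \BBx{H}\zeq X\times Y\to_{\ZZ_2} Z\zeq \BBx{K}$. Pre-composing with the projection-induced map $\BBx{G\times H}\to_{\ZZ_2}\BBx{G}\times \BBx{H}$ yields $\BBx{G\times H}\to_{\ZZ_2}\BBx{K}$, so Approximation gives an odd $k$ with $\PathRF{k}(G\times H)\to K$, hence $\PathRF{k}(G)\times \PathRF{k}(H)\to K$ by product preservation. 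Applying the assumed condition to $\PathRF{k}(G),\PathRF{k}(H)$ yields an odd $l$ with, say, $\PathRF{l}(\PathRF{k}(G))\to K$. Two applications of the Equivalence Theorem give $X\zeq \BBx{G}\zeq \BBx{\PathRF{k}(G)}\zeq \BBx{\PathRF{l}(\PathRF{k}(G))}$, and composing with the $\ZZ_2$-map induced from the last homomorphism produces $X\to_{\ZZ_2}\BBx{K}\zeq Z$; the other case gives $Y\to_{\ZZ_2} Z$, so $Z$ is multiplicative.

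The step I expect to be the main obstacle is constructing the $\ZZ_2$-map $\BBx{G}\times \BBx{H}\to_{\ZZ_2}\BBx{G\times H}$ used in the forward direction: unlike its reverse, this is not a formal consequence of functoriality and relies on a concrete simplicial matching between products of complete bipartite subgraphs of $G$ and $H$ and bipartite subgraphs of the tensor product $G\times H$ (a product $K_{a,b}\times K_{c,d}$ decomposes as two such pieces). All remaining steps follow formally from the three named theorems and the standard adjointness properties of $\PathRF{k}$.
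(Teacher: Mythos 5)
Your proposal is correct and follows essentially the same route as the paper: box complexes plus the Universality, Equivalence and Approximation Theorems, with $\PathRF{k}$ commuting with products. The one step you flag as the main obstacle, the $\ZZ_2$-map $\BBx{G}\times\BBx{H}\to_{\ZZ_2}\BBx{G\times H}$, is exactly the content of the paper's Lemma~\ref{lem:boxproduct} (cited from Csorba, with the $K_{a,b}\times K_{c,d}$ decomposition you describe underlying it), so the paper simply invokes that equivalence in both directions rather than re-deriving either half.
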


Thus Conjecture~\ref{conj:main} can be stated as a purely combinatorial statement, relaxing Hedetniemi's conjecture.
However, we note that the conclusion that $\PathRF{k}(G) \to K$ is much weaker than the desired $G \to K$.
For example, circular cliques $K_{p/q}$ with $3 < p/q < 4$ do not admit a homomorphism into $K_3$, but $\PathRF{k}(K_{p/q})$ does (for high enough odd $k$ depending on $p/q$), since the box complex of $K_{p/q}$ is a circle (up to homotopy, in this range of $p/q$).
More strikingly, when $G$ has high girth, then $G$ can have high chromatic number, but $\PathRF{k}(G)$ coincides with the graph $k$-subdivision of $G$ (Lemma~\ref{lem:squarefree}.\ref{l:subdiv}), which is always 3-colorable.

Nevertheless, quite surprisingly, known proofs of multiplicativity for graphs largely follow topological ideas.
In Section~\ref{sec:mult} we give direct, elementary proofs of the multiplicativity of the circle $\Sphere^1$ and discuss the few additional steps needed to conclude the multiplicativity of $K_3$, cycles, and circular cliques.
(We note that Matsushita~\cite{Matsushita17} gives a different, though in essence somewhat similar, direct proof of the multiplicativity of $\Sphere^1$, using the theory of covering spaces).\looseness=-1

This strongly suggests that Conjecture~\ref{conj:main} is crucial to resolving Hedetniemi's conjecture: any counter-example immediately implies a counter-example to Hedetniemi's conjecture, while a proof could be an important first step to a strengthening for graphs (and at least implies a weaker graph-theoretical statement).
Furthermore, a proof of Conjecture~\ref{conj:main} should be in principle easier than any proof of Hedetniemi's conjecture, while obstacles to proving Conjecture~\ref{conj:main} are also obstacles for certain approaches to Hedetniemi's conjecture.
We discuss these in Section~\ref{sec:mult}.

\vspace*{0.5\baselineskip}

\paragraph*{Organization}
Section~\ref{sec:prelims} gives all basic definitions and lists some properties of $\PathLF{k}, \PathCF{k}, \PathRF{k}$ functors.
In Section~\ref{sec:mult} we consider in more detail the implications on Hedetniemi's conjecture and multiplicative graphs; we also comment more on obstacles to generalizations, on Conjecture~\ref{conj:main}, and on open questions that arise.
Section~\ref{sec:multproof} gives the proofs of Theorems~\ref{thm:hedetImplies}, \ref{thm:multImplies} and~\ref{thm:characterization}, which are straightforward applications of the main technical theorems.
The Equivalence Theorem~\ref{thm:equiv} is proved in Section~\ref{sec:equiv} using Discrete Morse Theory, which is also introduced there.
Finally, the Approximation Theorem~\ref{thm:approximation} is proved in Section~\ref{sec:approx}, by considering the geometry of $\BBx{\PathRF{k}(G)}$ and then a fairly standard use of the simplicial approximation technique.

\vspace*{0.5\baselineskip}

\section{Preliminaries}\label{sec:prelims}
\paragraph*{Graphs}
A \emph{graph} $G$ is a pair $(V(G),E(G))$ where $V(G)$ is a finite set of \emph{vertices} and $E(G)$ is a symmetric relation on vertices.
Instead of $\{u,v\} \in E(G)$ we say that  $uv \in G$ is an edge of $G$ between $u$ and $v$, that $u$ and $v$ are \emph{adjacent}, and that the endpoints $u$ and $v$ are \emph{incident} to the edge $uv$.
Here $u$ may be equal to $v$, such edges are called \emph{loops}.

The \emph{neighborhood} of a vertex $v \in V(G)$,\marginpar{$N(v)$} denoted $N_G(v)$ (where the subscript is omitted when clear from the context) is the set of vertices adjacent to $v$ (it includes $v$ itself if and only if there~is a loop at $v$, that is, $vv \in G$).
For a graph $G$ and two vertex subsets $A,B \subseteq V(G)$,\marginpar{$A\join B$} we write $A \join B$ ($A$ is \emph{joined} to $B$) if all vertices of $A$ are adjacent to all vertices of $B$ (and $A\notjoin B$ otherwise).
We denote the \emph{common neighborhood}\marginpar{$\CN(A)$} of $A\subseteq V(G)$ as $\CN(A) := \bigcap_{v \in A} N(v)$ ($\CN(\emptyset) = V(G)$).
Observe that $A \join B$ if and only if $A \subseteq \CN(B)$ if and only if $B \subseteq \CN(A)$.
Note that $A \join B$ implies that $A$ and $B$ are disjoint, if $G$ has no loops.

The \emph{path} $P_n$ \marginpar{$P_n$\\$C_n$\\$K_n$\\$K_{n,m}$}
is the graph with $V(P_n)=\{1,\dots,n\}$ and $E(P_n)=\{\{i,i+1\} \mid i=1\dots n-1\}$.
The~\emph{cycle} $C_n$ is the graph with $V(C_n)=\ZZ_n$ and $E(C_n)=\{\{i,i+1\} \mid i\in \ZZ_n\}$.
The~\emph{clique} $K_n$ (aka \emph{complete graph}) is the graph with $V(K_n)=\{1,\dots,n\}$ and $E(K_n)=\{\{i,j\} \mid i\neq j = 1\dots n\}$.
The~\emph{biclique} $K_{n,m}$ (aka \emph{complete bipartite graph}) is the graph with $V(K_{n,m})=\{u_1,\dots,u_n,v_1,\dots,v_m\}$ and $E(K_{n,m})=\{\{u_i,v_j\} \mid i=1\dots n,\ j=1\dots m\}$.
The \emph{circular clique}\marginpar{$K_{p/q}$} $K_{p/q}$,
for integers $p,q$ such that $\frac{p}{q}\geq2$, is the graph with $V(K_{p/q})= \ZZ_p$ and $E(K_{p/q}) = \{ \{i,i+j\} \mid j=q,q+1,\dots,p-q,\ i\in\ZZ_p\}$.

A \emph{homomorphism}\marginpar{\footnotesize{homom.}\\$G\to H$} from a graph $G$ to $H$, denoted $f: G \to H$, is a function $f: V(G) \to V(H)$ such that $uv \in E(G)$ implies $f(u)f(v) \in E(H)$, for all $u,v \in V(H)$.
We write $G\to H$ to say that some such homomorphism exists.
Observe that if $H$ has a loop, then every graph has a homomorphism to $H$; if $G$ has a loop, then it can only have a homomorphism to another graph with a loop.
Thus graphs with loops are trivial, for our purposes, but they allow us to formulate statements more uniformly.
Two graphs are \emph{homomorphically equivalent},\marginpar{$G \leftrightarrow H$} denoted $G\leftrightarrow H$, if $G\to H$ and $H\to G$.
An $n$- coloring of $G$ is a homomorphism $G \to K_n$;\marginpar{$\chi(G)$} the \emph{chromatic number} of $G$, denoted $\chi(G)$, is the least $n$ such that $G$ has an $n$-coloring.\marginpar{\footnotesize{$n$-chromatic}}
A graph $G$ is \emph{$n$-chromatic} if $\chi(G)=n$.
The \emph{tensor product}\marginpar{$G \times H$} of graphs $G,H$ (also called the \emph{categorical product}), denoted $G \times H$, is the graph with vertex set $V(G) \times V(H)$, with $(g,h)$ adjacent to $(g',h')$ if and only if $gg' \in G$ and $hh' \in H$.

A\marginpar{\footnotesize{bipartite}}
graph $G$ is \emph{bipartite} if $V(G)$ can be partitioned into two sets $A,B$ such that $E(G)\subseteq \{ \{a,b\} \mid a\in A, b \in B\}$. Equivalently, $G$ contains no odd cycles. Also equivalently, $G\to K_2$.
A \emph{walk}\marginpar{\footnotesize{walk}\\\footnotesize{path}} of length $n$ is a sequence of vertices $v_0,\dots,v_n$ such that $v_i$ is adjacent to $v_{i+1}$ ($i=0\dots n-1$); that is, vertices and edges may repeat, and the length is the number of edges.
A \emph{path} is a walk with no vertex (nor edge) repetitions.
A \emph{subgraph} of a graph $G$ is any graph isomorphic to a graph obtained by removing edges and vertices from $G$.
The subgraph of $G$ \emph{induced} on a subset $S \subseteq V(G)$ is obtained by removing all vertices (with incident edges) outside of $S$.

A graph without loops is \emph{square-free}\marginpar{\footnotesize{square-free}} if it contains no $C_4$ as a subgraph (induced or not).
More generally, a graph $G$ (with loops allowed) is square-free if $A \join B$ implies $|A| \leq 1$ or $|B| \leq 1$ (equivalently, it contains no $C_4$, no two adjacent loops, and no triangle $K_3$ with a looped vertex).

\paragraph*{Topology}
A \emph{simplicial complex} $K$ is a family of non-empty subsets of a finite set, which is downward-closed, that is: $\emptyset \neq \sigma' \subseteq \sigma \in K$ implies $\sigma' \in K$. The elements of $K$ are the \emph{simplices} (or \emph{faces}) of the complex, while the elements of $V(K) := \bigcup_{\sigma \in K} \sigma$ are the \emph{vertices} of the complex.
The\marginpar{$|K|$} \emph{geometric realization} $|\sigma|$ of a simplex $\sigma \in K$ is the subset of $\RR^{V(K)}$ defined as the convex hull of $\{e_v \mid v \in \sigma\}$, where $e_v$ is the standard basis unit vector corresponding to the $v$ coordinate in $\RR^{V(K)}$.
The geometric realization $|K|$ of $K$ is the topological space obtained as the subspace $\bigcup_{\sigma \in X} |\sigma| \subseteq \RR^{V(K)}$.
We often refer to $K$ itself as a topological space, meaning $|K|$.

A $\ZZ_2$-simplicial complex is a simplicial complex $K$ together with a $\ZZ_2$-action: a function $\nu : V(K) \to V(K)$ which maps simplices to simplices and satisfies $\nu(\nu(v)) = v$ for $v \in V(K)$. The $\ZZ_2$-action on $|K|$ is defined by extending $e_v \mapsto e_{\nu(v)}$ linearly on each simplex $|\sigma|$.
We say the\marginpar{\footnotesize{free action}} $\ZZ_2$-action on $K$ is \emph{free}, or $K$ is \emph{free}, if $\{v,\nu(v)\}\not\in K$ for $v\in V(K)$ (equivalently, the image of every simplex $\sigma \in K$, $\nu(\sigma) := \{\nu(v) \mid v \in V(K)\}$, is disjoint from $\sigma$; this implies $\nu(x) \neq x$ for $x \in |K|$).
We\marginpar{$-x, -\sigma$} will usually denote the $\ZZ_2$-action just by $-x$ and $-\sigma$ instead of $\nu(x)$ and $\nu(\sigma)$.

A \emph{$\ZZ_2$-map}\marginpar{\footnotesize{$\ZZ_2$-map}} between two $\ZZ_2$-spaces $X,Y$, denoted $f: X \to_{\ZZ_2} Y$, is a continuous function from $X$ to $Y$ such that $f(\nu_X(x)) = \nu_Y(f(x))$ for $x\in X$.
We write\marginpar{\footnotesize{$X \to_{\ZZ_2} Y$}} $X \to_{\ZZ_2} Y$ if there exists such a map.
A~\emph{$\ZZ_2$-homotopy}\marginpar{\footnotesize{$\ZZ_2$-homot.}} between $f,g: X \to_{\ZZ_2} Y$ is a family of $\ZZ_2$-maps $h_t: X \to_{\ZZ_2} Y$ for $t\in [0,1]$ such that $h_0=f$, $h_1=g$ and such that $(t,x) \mapsto h_t(x)$ is continuous, as a map from $[0,1] \times X$ to $Y$.
We say that $f,g$ are \emph{$\ZZ_2$-homotopic} if such a homotopy exists.
We say that\marginpar{\footnotesize{$X \zeq Y$}} two $\ZZ_2$-spaces $X,Y$ are $\ZZ_2$-homotopy equivalent, denoted $X \zeq Y$, if there are $\ZZ_2$-maps $f: X \to_{\ZZ_2} Y$ and $g: Y \to_{\ZZ_2} X$ such that $g(f(\cdot))$ is $\ZZ_2$-homotopic to the identity on $X$ and $f(g(\cdot))$ is $\ZZ_2$-homotopic to the identity on $Y$.
Both $f$ and $g$ are then called a \emph{$\ZZ_2$-homotopy equivalence}.
Note this is much stronger than just requiring $X \to_{\ZZ_2} Y$ and $Y \to_{\ZZ_2} X$.
The \emph{product}\marginpar{$X \times Y$} $X \times Y$ of two $\ZZ_2$-spaces $X,Y$ is the product space $X,Y$ with $\ZZ_2$-action $(x,y)\mapsto (\nu_X(x),\nu_Y(y))$.
The\marginpar{$\Sphere^n$} $n$-dimensional \emph{sphere} is the $\ZZ_2$-space defined as the unit sphere in $\RR^{n+1}$ with $\ZZ_2$-action $x \mapsto -x$.

\paragraph*{Box  complex} The \emph{box complex}\marginpar{$\Bx{G}$} $\Bx{G}$ of a graph $G$ is a simplicial complex defined as follows.
If $G$ has isolated vertices (vertices with no neighbors), first remove all of them from $G$.
Let the vertex set of $\Bx{G}$ be $V(G) \times \{\circ,\bullet\}$; that is, for every (non-isolated) vertex $v \in V(G)$, the\marginpar{$\vL{v},\vR{v},\vU{v}$} simplicial complex has two vertices, which we denote $\vL{v}$ and $\vR{v}$.
We will also write $\vU{v}$ when $\unk \in \{\circ,\bullet\}$ is clear from the context.
For a set $A \subseteq V(G)$, we write \marginpar{$\vL{A},\vR{A}$}  $\vL{A}$ and $\vR{A}$ for $\{\vL{v} \mid v \in A\}$ and $\{\vR{v} \mid v \in A\}$.
For\marginpar{$\vvL{\sigma},\vvR{\sigma}$} a set $\sigma \subseteq V(G) \times \{\circ,\bullet\}$, we write $\vvL{\sigma}$ and $\vvR{\sigma}$ for $\{v \mid \vL{v} \in \sigma\}$ and $\{v \mid \vR{v} \in \sigma\}$, respectively.
(To avoid confusion, we denote simplices with small Greek letters and vertex subsets with capital Latin letters).
The simplices of $\Bx{G}$ are exactly those sets $\sigma \subseteq V(G) \times \{\circ,\bullet\}$ such that $\vvL{\sigma} \join \vvR{\sigma}$ and both $\CN(\vvL{\sigma})$ and $\CN(\vvR{\sigma})$ are non-empty (in other words, the non-trivial complete bipartite subgraphs of $G$ and their subsets).
Note that if $\vvL{\sigma} \neq \emptyset$, then $(\vvL{\sigma} \times\{\circ\}) \cup (\CN(\vvL{\sigma}) \times \{\bullet\})$ is again a simplex, containing $\sigma$; similarly for $\vvR{\sigma}$; hence all maximal simplices $\sigma$ have both $\vvL{\sigma}$ and $\vvR{\sigma}$ non-empty.
The $\ZZ_2$-action $-$ on $\Bx{G}$ is defined as  $-\vL{v}=\vR{v}$ and $-\vR{v}=\vL{v}$ for each $v \in V(G)$.
If $G$ has no loops, then $\vL{v}$ is never in a simplex together with $\vR{v}$, so the $\ZZ_2$-action is free (the converse is also true).
As\marginpar{$|h|$} mentioned in the introduction, a homomorphism $h: G \to H$ induces a $\ZZ_2$-map $\BBx{G} \to_{\ZZ_2} \BBx{H}$, which we denote $|h|$ (thus one can also think of it as a functor, into the category of $\ZZ_2$-spaces and $\ZZ_2$-maps).

\paragraph*{Graph functors}
An\marginpar{\footnotesize{functor,}} operation $\CentralF$ on graphs is a (thin) \emph{functor} if $G\to H$ implies $\CentralF(G) \to \CentralF(H)$, for all graphs $G,H$.
Two\marginpar{\footnotesize{adjoint}} functors $\CentralF,\RightF$ are called a (thin) \emph{adjoint pair} when $\CentralF(G) \to H$ holds if and only if $G \to \RightF(H)$ does. In this case $\CentralF,\RightF$ are called \emph{left} and \emph{right} adjoints, respectively.
Note that a right adjoint functor may be a left adjoint in another pair, as is the case for the $\PathCF{k}$ functor.
The graph subdivision functor $\PathLF{k}$ and powering functor $\PathCF{k}$ were defined in the introduction. 

For\marginpar{$\PathRF{k}(G)$} a graph $G$ and an integer $\ell$, the graph $\PathRF{2\ell+1}(G)$ is defined as follows.
Its vertices are tuples $\tup{A}=(A_0,\dots,A_\ell)$ of vertex subsets $A_i \subseteq V(G)$ such that $A_0$ is a singleton (contains exactly one vertex) and $A_{i-1} \join A_{i}$ (for $i=1\dots \ell$).
Its edges are pairs $\{\tup{A}, \tup{B}\}$ such that $A_{i-1} \subseteq B_i$, $B_{i-1} \subseteq A_i$, and $A_\ell \join B_\ell$ (note this implies $A_i \join B_i$ for $i=0\dots \ell$).
We\marginpar{$p_k$} define the homomorphism $p_{2\ell+1}\colon\PathRF{2\ell+1}(G) \to G$ as $p_{2\ell+1}((\{v\},A_1,\dots,A_\ell)) := v$.
We do not define $\PathRF{k}(G)$ for even integers $k$ (see~\cite{FoniokT17} for a functor $\PathRF{2}$ that shares some properties).

\medskip
We now list a few basic properties of these functors.
For the box complex, an important property is that $\Bx{}$ commutes with products:\footnote{As noted in \cite{SimonyiZ10}, Remark 3, this follows from Theorem~9 in~\cite{Csorba08} saying that $\BBx{H}$ is $\ZZ_2$-homotopy equivalent to the so-called \emph{hom complex} $\Hom(K_2,G)$ and from the fact that $\Hom(K_2,G \times H)$ is isomorphic, as a $\ZZ_2$-simplicial complex, to $\Hom(K_2, G) \times \Hom(K_2, H)$. The isomorphism is tedious but straightforward to check from definitions.
The small additional complexity in existing proofs, which show only homotopy equivalence instead of isomorphism, is due to considering posets instead of simplicial complexes.}

\begin{lemma}[\cite{Csorba08}]\label{lem:boxproduct}
	$\BBx{G} \times \BBx{H} \zeq \BBx{G \times H}$.
\end{lemma}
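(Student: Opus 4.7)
The plan is to route the argument through Lovász's \emph{hom complex} $\Hom(K_2, G)$, whose cells are pairs $(B_0, B_1)$ of non-empty subsets of $V(G)$ satisfying $B_0 \join B_1$, ordered by componentwise inclusion and carrying the $\ZZ_2$-action that swaps the two coordinates. The first ingredient is the $\ZZ_2$-homotopy equivalence $\BBx{G} \zeq |\Hom(K_2, G)|$ proved in \cite{Csorba08}, which reduces the lemma to the analogous formula for hom complexes,
\[
|\Hom(K_2, G \times H)| \zeq |\Hom(K_2, G)| \times |\Hom(K_2, H)|.
\]

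The combinatorial heart is the identity $\CN_{G \times H}(B \times C) = \CN_G(B) \times \CN_H(C)$, immediate from the definition of the tensor product. Using this, I would first identify the product poset $\Hom(K_2, G) \times \Hom(K_2, H)$ with the subposet $P$ of $\Hom(K_2, G \times H)$ consisting of those cells $(A_0, A_1)$ in which each $A_i$ splits as a product $B_i \times C_i$, via the $\ZZ_2$-equivariant map $((B_0, B_1), (C_0, C_1)) \mapsto (B_0 \times C_0, B_1 \times C_1)$. This is bijective onto $P$ and order-preserving in both directions, so it induces a $\ZZ_2$-homeomorphism of order complexes.

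It then remains to show that $P$ is a $\ZZ_2$-deformation retract of the full poset $\Hom(K_2, G \times H)$. The map
\[
\rho : (A_0, A_1) \mapsto \bigl(\pi_G(A_0) \times \pi_H(A_0),\; \pi_G(A_1) \times \pi_H(A_1)\bigr)
\]
is $\ZZ_2$-equivariant, order-preserving, satisfies $\rho(\sigma) \geq \sigma$, and is idempotent; moreover its image is exactly $P$, and the combinatorial identity above ensures that $\rho(\sigma)$ really is a cell (i.e., that the two factors are still joined). Any such ascending closure operator induces a $\ZZ_2$-homotopy equivalence between the order complex of the original poset and that of its image, via the natural transformation $\mathrm{id} \leq \rho$ (a standard application of Quillen's Theorem~A, or directly of the straight-line homotopy realizing this natural transformation).

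Chaining the three steps yields the lemma. The main technical nuisance, rather than any deep obstacle, is navigating between hom complexes seen as prodsimplicial complexes and as posets of cells (their realizations differ by a barycentric subdivision); one must check that the equivalence $\BBx{G} \zeq |\Hom(K_2, G)|$ and the $\ZZ_2$-action are compatible with whichever presentation is used at each stage.
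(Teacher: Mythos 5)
Your proof is correct and follows the same overall route as the paper: both reduce the lemma to Csorba's Theorem~9 (that $\BBx{G} \zeq |\Hom(K_2,G)|$) together with a product formula for $\Hom(K_2,\cdot)$. The only divergence is in how that product formula is handled. The paper's footnote asserts that $\Hom(K_2,G\times H)$ is outright \emph{isomorphic} to $\Hom(K_2,G)\times\Hom(K_2,H)$ when both sides are viewed as simplicial complexes, leaves the check as ``tedious but straightforward,'' and explicitly remarks that existing proofs obtain only a homotopy equivalence because they work with posets of cells. Your argument is exactly that poset-level proof: since a cell $(A_0,A_1)$ of $\Hom(K_2,G\times H)$ need not split as a product (e.g.\ $A_1$ can be a non-product subset of $\CN(A_0)$), you retract onto the subposet of product cells via the ascending, idempotent, $\ZZ_2$-equivariant closure operator $\rho$; this is a standard and fully rigorous application of the closure/Quillen argument, and your verification that $\rho$ lands in cells (via $\CN_{G\times H}(B\times C)=\CN_G(B)\times\CN_H(C)$) is the key point. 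The paper's isomorphism claim, if one accepts it, avoids the retraction step entirely; your version actually proves the needed equivalence, at the cost of the routine bookkeeping between prodsimplicial realizations and order complexes that you correctly flag at the end.
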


Similarly, any right adjoint graph functor commutes with the tensor product.
We state this together with a few other simple properties.
The proofs are straightforward. The applications to multiplicativity (Lemma~\ref{lem:funcproduct}.\ref{l:rightmult} and~\ref{lem:squarefree}.\ref{l:mult}) have first been shown and used by Tardif~\cite{Tardif05}; we do not use them in this paper and recall them only for reference.

\pagebreak[3]

\begin{lemma}\label{lem:funcproduct}
	Let $G,G_1,G_2$ be any graphs. Then:
	\begin{enumerate}[label=(\roman*)]
		\item\label{l:proj} $G_1 \times G_2 \to G_i$, for $i=1,2$;
		\item\label{l:and} $G \to G_1 \times G_2$ if and only if $G \to G_1$ and $G \to G_2$;
		\item\label{l:funcprod} if $\CentralF$ is a functor, then $\CentralF(G_1 \times G_2) \to \CentralF(G_1) \times \CentralF(G_2)$;
		\item\label{l:adjoint} if $(\CentralF,\RightF)$ is an adjoint pair of functors, then $\CentralF(\RightF(G)) \to G \to \RightF(\CentralF(G))$;
		\item\label{l:rightprod} if $\RightF$ is a right adjoint, then $\RightF(G_1 \times G_2) \leftrightarrow \RightF(G_1) \times \RightF(G_2)$;
		\item\label{l:rightmult} if $\RightF$ is a right adjoint to a functor that is a right adjoint itself, and if $K$ is a multiplicative graph, then $\RightF(K)$ is multiplicative too.
	\end{enumerate}
\end{lemma}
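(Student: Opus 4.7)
The plan is to prove the six items in order, using each part to shorten the next. Parts (i)-(iii) are direct from definitions, (iv) is the familiar unit/counit construction, and then (v) and (vi) combine these.

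For (i), the two coordinate projections are homomorphisms by the very definition of the tensor product: if $(g_1,g_2)(g_1',g_2')$ is an edge of $G_1\times G_2$, then $g_ig_i'$ is an edge of $G_i$. For (ii), the forward direction follows by composing with a projection from (i), while the backward direction uses $v\mapsto(f_1(v),f_2(v))$, which is a homomorphism since each coordinate preserves adjacency. For (iii), first apply the functor $\CentralF$ to the two projections from (i), yielding $\CentralF(G_1\times G_2)\to \CentralF(G_i)$ for $i=1,2$, and then combine via (ii).

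For (iv), I would instantiate the adjunction $\CentralF(\,\cdot\,)\to(\,\cdot\,)\iff(\,\cdot\,)\to\RightF(\,\cdot\,)$ on the identity homomorphisms: the identity $\RightF(G)\to\RightF(G)$ transposes to the counit $\CentralF(\RightF(G))\to G$, and the identity $\CentralF(G)\to\CentralF(G)$ transposes to the unit $G\to\RightF(\CentralF(G))$. For (v), one direction $\RightF(G_1\times G_2)\to\RightF(G_1)\times\RightF(G_2)$ is just (iii) applied to the functor $\RightF$. For the other, let $\CentralF$ be a left adjoint of $\RightF$; by adjointness it suffices to build $\CentralF(\RightF(G_1)\times\RightF(G_2))\to G_1\times G_2$, which I get by composing (iii) for $\CentralF$ with the counit from (iv) in each coordinate and applying (ii).

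Finally, for (vi), write $\RightF$ as a right adjoint of some $\CentralF$, which itself is a right adjoint (say of some $\LeftF$). Suppose $G_1\times G_2\to\RightF(K)$. By adjointness this is equivalent to $\CentralF(G_1\times G_2)\to K$. Because $\CentralF$ is a right adjoint, part (v) gives $\CentralF(G_1\times G_2)\leftrightarrow\CentralF(G_1)\times\CentralF(G_2)$, hence $\CentralF(G_1)\times\CentralF(G_2)\to K$. Multiplicativity of $K$ then forces $\CentralF(G_i)\to K$ for some $i\in\{1,2\}$, which transposes back to $G_i\to\RightF(K)$.

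None of the steps should present serious difficulties; the only subtle point is (v), where the crucial ingredient is that $\RightF$ itself has a left adjoint so that morphisms into $\RightF(G_1\times G_2)$ can be transposed. Everything else is manipulation of adjunctions and tensor products via (i)-(iv).
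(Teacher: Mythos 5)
Your proof is correct, and it fills in exactly the routine adjunction/product manipulations that the paper declares ``straightforward'' and omits. In particular you correctly identify the two places where extra hypotheses are genuinely needed: the nontrivial direction of (v) uses that $\RightF$ has a left adjoint to transpose maps into $\RightF(G_1\times G_2)$, and (vi) uses that $\CentralF$ is itself a right adjoint so that (v) applies to it, yielding $\CentralF(G_1)\times\CentralF(G_2)\to\CentralF(G_1\times G_2)\to K$ before invoking multiplicativity of $K$.
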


\noindent
We follow with a few properties more specific to $\PathLF{k}$, $\PathCF{k}$, and $\PathRF{k}$.
Most of these have been shown by Tardif~\cite{Tardif05} or by Hajiabolhassan and Taherkhani~\cite{Hajiabolhassan09, HajiabolhassanT10}, who also proved many properties of other compositions of these functors (which can be interpreted as ``fractional powers'').
As far as we know, \ref{l:subgr} and \ref{l:subdiv} are folklore, but have not appeared earlier in the literature.

\begin{lemma}\label{lem:squarefree}
	Let $G,H,K$ be graphs and let $k,k'$ be odd integers. Then:
	\begin{enumerate}[label=(\roman*)]
		\item\label{l:adj} $\PathLF{k}(G) \to H$ if and only if $G \to \PathCF{k}(H)$ (that is, $(\PathLF{k},\PathCF{k})$ is an adjoint pair);
		\item \label{l:adj2} $\PathCF{k}(G) \to H$ if and only if $G \to \PathRF{k}(H)$ (that is, $(\PathCF{k},\PathRF{k})$ is an adjoint pair);		
		\item\label{l:downsub} $\PathLF{k}(G) \to \PathLF{k-2}(G) \to \dots \to \PathLF{1}(G) = G = \PathCF{1}(G) \to \dots \to \PathCF{k-2}(G) \to \PathCF{k}(G)$;
		\item\label{l:down} $\PathRF{k}(G) \to \PathRF{k-2}(G) \to \dots \to \PathRF{1}(G) = G$;
		\item\label{l:comp} $\PathLF{k}(\PathLF{k'}(G)) \leftrightarrow \PathLF{k\cdot k'}(G)$,\ \  $\PathCF{k}(\PathCF{k'}(G)) \leftrightarrow \PathCF{k\cdot k'}(G)$,\ \ and\ \ $\PathRF{k}(\PathRF{k'}(G)) \leftrightarrow \PathRF{k\cdot k'}(G)$;
		\item\label{l:subgr} $\PathLF{k}(G) \subseteq \PathRF{k}(G)$, in particular $\PathLF{k}(G) \to \PathRF{k}(G)$;
		\item\label{l:subdiv} if $G$ is square-free, then $\PathLF{k}(G) \leftrightarrow \PathRF{k}(G)$;
		\item \label{l:iff2} $\PathCF{k}(\PathRF{k}(G)) \leftrightarrow G \leftrightarrow \PathCF{k}(\PathLF{k}(G))$;
		\item \label{l:iff} $G \to H$ if and only if $\PathRF{k}(G) \to \PathRF{k}(H)$;
		\item\label{l:mult} $K$ is multiplicative if and only if $\PathRF{k}(K)$ is.
	\end{enumerate}
\end{lemma}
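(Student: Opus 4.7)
The statement is an ``iff'', so I will prove the two directions separately, and both should reduce to bookkeeping with the adjunction $(\PathCF{k},\PathRF{k})$ together with the product-preservation properties in Lemma~\ref{lem:funcproduct}.

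For the forward direction ($K$ multiplicative $\Rightarrow$ $\PathRF{k}(K)$ multiplicative), I would simply invoke Lemma~\ref{lem:funcproduct}.\ref{l:rightmult}: by part~\ref{l:adj} the functor $\PathCF{k}$ is a right adjoint (to $\PathLF{k}$), and by part~\ref{l:adj2} the functor $\PathRF{k}$ is a right adjoint to $\PathCF{k}$, so $\PathRF{k}$ fits exactly the hypothesis of~\ref{l:rightmult}. Nothing else is needed here.

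For the reverse direction, assume $\PathRF{k}(K)$ is multiplicative and let $G,H$ be graphs with $G\times H \to K$. The plan is to push everything into the ``$\PathRF{k}$-world'' where multiplicativity is available, deduce the conclusion there, and pull it back. Concretely, apply the functor $\PathRF{k}$: by Lemma~\ref{lem:funcproduct}.\ref{l:rightprod}, $\PathRF{k}(G\times H)\leftrightarrow \PathRF{k}(G)\times\PathRF{k}(H)$, so from $\PathRF{k}(G\times H)\to\PathRF{k}(K)$ we obtain
\[
\PathRF{k}(G)\times\PathRF{k}(H)\;\to\;\PathRF{k}(K).
\]
Multiplicativity of $\PathRF{k}(K)$ then yields, without loss of generality, $\PathRF{k}(G)\to\PathRF{k}(K)$. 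Finally, part~\ref{l:iff} of the present lemma converts this back into $G\to K$, which is what we wanted.

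The only thing that might count as a ``step'' rather than a one-line citation is checking part~\ref{l:iff}, but that is also essentially formal: functoriality gives one direction, and for the other, applying $\PathCF{k}$ to a map $\PathRF{k}(G)\to\PathRF{k}(H)$ and invoking $\PathCF{k}(\PathRF{k}(G))\leftrightarrow G$ and $\PathCF{k}(\PathRF{k}(H))\leftrightarrow H$ from part~\ref{l:iff2} yields $G\to H$. So I do not foresee any genuine obstacle; the main ``content'' of the lemma is really in the earlier parts \ref{l:adj}, \ref{l:adj2}, \ref{l:iff2}, and in \ref{lem:funcproduct}.\ref{l:rightprod}/\ref{l:rightmult}, and (x) is the formal consequence packaging them together.
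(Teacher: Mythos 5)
Your argument for part~\ref{l:mult} is correct and is essentially identical to the paper's: one direction by Lemma~\ref{lem:funcproduct}.\ref{l:rightmult} (since $\PathRF{k}$ is right adjoint to $\PathCF{k}$, itself a right adjoint), and the other by applying $\PathRF{k}$ to $G\times H\to K$, commuting with the product via Lemma~\ref{lem:funcproduct}.\ref{l:rightprod}, using multiplicativity of $\PathRF{k}(K)$, and descending via~\ref{l:iff}. Your sketch of~\ref{l:iff} (apply $\PathCF{k}$ and use~\ref{l:iff2}) also matches the paper exactly.

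The gap is one of coverage, not correctness: the statement you were asked to prove is the entire ten-part lemma, and you have only established \ref{l:iff} and \ref{l:mult} while taking \ref{l:adj}--\ref{l:iff2} as given. You yourself observe that ``the main content of the lemma is really in the earlier parts,'' and that is precisely the issue: part~\ref{l:adj2} requires exhibiting the adjunction explicitly (the paper sends $v$ to $(f(N^0(v)),\dots,f(N^\ell(v)))$ where $N^i(v)$ is the set of vertices reachable by walks of length $i$); part~\ref{l:subgr} requires constructing an explicit injective homomorphism $\PathLF{k}(G)\to\PathRF{k}(G)$ by embedding each subdivided edge as a specific path of tuples; part~\ref{l:subdiv} requires a genuine case analysis exploiting square-freeness to show the chains of containments collapse to singletons; and part~\ref{l:iff2} is not purely formal either --- the direction $G\to\PathCF{k}(\PathRF{k}(G))$ is \emph{not} the counit/unit inequality of Lemma~\ref{lem:funcproduct}.\ref{l:adjoint} (which gives only $\PathCF{k}(\PathRF{k}(G))\to G$) but is derived in the paper from \ref{l:subgr} by adjointness. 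So the reduction you give is the right final step, but a complete proof of the lemma still owes these constructions.
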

\begin{proof}
	Let $k=2\ell+1$.
	\ref{l:adj} and \ref{l:downsub} follow straight from definitions.
	For one direction of \ref{l:adj2}, let $f: \PathCF{k}(G) \to H$;
	then a homomorphism $G \to \PathRF{k}(H)$ is given by $v \mapsto (f(N^0(v)),\dots,f(N^\ell(v)))$ where $N^i(v)$ is the set of vertices reachable from $v$ by walks of length $i$, as one can easily check.
	For the other direction, let $f: G \to \PathRF{k}(H)$;
	then a homomorphism $\PathCF{k}(G) \to H$ is given by mapping $v$ to the only vertex in the first, singleton set of $f(v) = (A_0,\dots,A_\ell)$.	
	
	For \ref{l:down}, $(A_0,\dots,A_{\ell-1},A_\ell) \mapsto (A_0,\dots,A_{\ell-1})$ gives the homomorphism (where $k=2\ell+1$).
	For \ref{l:comp} observe that $\PathLF{k}(\PathLF{k'}(G)) \leftrightarrow \PathLF{k\cdot k'}(G)$ follows from the definition. Then since $\PathCF{k \cdot k'}$ is a right adjoint to $\PathLF{k\cdot k'}$, which is homomorphically equivalent (when applied to any graph) to  $\PathLF{k}(\PathLF{k'}(\cdot))$, which in turn is a left adjoint to $\PathCF{k'}(\PathCF{k}(\cdot))$, it follows that $\PathCF{k \cdot k'}(G) \leftrightarrow \PathCF{k'}(\PathCF{k}(G))$ (for all $G,k,k'$).
	Similarly for $\PathRF{k}$.

	For \ref{l:subgr}, let us define the following injective homomorphism $\PathLF{k}(G) \to \PathRF{k}(G)$.
	For $ab\in E(G)$, the path of length $k$ between $a$ and $b$ in $\PathLF{k}(G)$ is mapped to the following path in $\PathRF{k}(G)$.
	(The two vertices in the middle should be swapped when $\lfloor \frac{k}{2}\rfloor$ is even).
	It is straightforward to check this defines an injective homomorphism in a consistent way:
	
	\pagebreak[3]
	
	\vspace*{-0.7\baselineskip}
	$$\def\arraystretch{0.97}\begin{array}{llllllll}
	&(\{a\},& N(a),&\{a\},&N(a),&\{a\},&\dots),\\
	&(\{b\},& \{a\},&N(a),&\{a\},&N(a),&\dots),\\
	&(\{a\},& \{b\},&\{a\},&N(a),&\{a\},&\dots),\\
	&\quad\vdots&&&&\\
	&(\{b\},& \{a\},&\{b\},&\{a\},&\{b\},&\dots),&\ (\leftarrow)\\
	&(\{a\},& \{b\},&\{a\},&\{b\},&\{a\},&\dots),&\ (\leftarrow\text{or vice-versa})\\
	&\quad\vdots&&&&\\
	&(\{b\},& \{a\},&\{b\},&N(b),&\{b\},&\dots),\\
	&(\{a\},& \{b\},&N(b),&\{b\},&N(b),&\dots),\\
	&(\{b\},& N(b),&\{b\},&N(b),&\{b\},&\dots).\\
	\end{array}$$

	\pagebreak[3]
	To show \ref{l:subdiv}, let $k=2\ell+1$. We construct $f:\PathRF{2\ell+1}(G) \to \PathLF{2\ell+1}(G)$ as follows.
	For $\tup{A}=(A_0,\dots,A_\ell) \in V(\PathRF{2\ell+1}(G))$ with $A_0=\{a\}$, let $j_{\tup{A}}$ be the maximum index such that $A_i$ are singletons for $i\leq j_{\tup{A}}$.
	If $j_{\tup{A}} = 0$, we set $f(\tup{A})=a$, otherwise let $A_1=\{b\}$ and we set $f(\tup{A})$ to be the $i$-th vertex on the path between $a$ and $b$ (counting $a$ as the 0-th vertex), where $i=j_{\tup{A}}$ if~$j_{\tup{A}}$ is even, while $i=2\ell+1 - j_{\tup{A}}$ if $j_{\tup{A}}$ is odd.
	\pagebreak[3]

	Let $\tup{A},\tup{B}$ be adjacent in $\PathRF{2\ell+1}(G)$.
	Since $A_\ell \join B_\ell$ and $G$ is square-free, one of $A_\ell,B_\ell$ must be of size at most $1$.
	Assume without loss of generality that $|A_\ell|\leq 1$ (otherwise swap $\tup{A}$ and $\tup{B}$).
	Since $A_\ell \supseteq B_{\ell-1} \supseteq A_{\ell-2} \supseteq \dots$ is a sequence of containments ending in a singleton $A_0$ or $B_0$, all these containments are equalities.
	Let us also assume that $\ell$ is odd (the proof is the same with $\ell$ even).
	That is, the sequence ends in $B_0$ and $A_\ell=B_{\ell-1}=A_{\ell-2}=\dots=B_0$ is a singleton.
	Let $B_0 = \{b\}$ and $A_0=\{a\}$.
	Consider the sequence $A_0 \subseteq B_1 \subseteq A_2 \subseteq \dots \subseteq B_\ell$ and let $j$ be the maximum index such that the $j$-th set of this sequence is a singleton, and hence equal to $A_0=\{a\}$, as well as to all the sets in between.
	Then, since the next sets in the sequence (if there are any) are not singletons, we have $j_{\tup{A}}=j$ and $j_{\tup{B}}=j+1$ or vice versa (depending on the parity of $j$), unless $j=\ell$, in which case $j_{\tup{A}}=j_{\tup{B}}=\ell$.
	It each case, it is easily checked that $f(\tup{A})$ and $f(\tup{B})$ are adjacent in $\PathLF{2\ell+1}(G)$.

	Observe that \ref{l:subgr} implies $\PathLF{k}(G) \to \PathRF{k}(G)$, which by adjointness is equivalent to
	$G \to \PathCF{k}(\PathRF{k}(G))$ and to $\PathCF{k}(\PathLF{k}(G)) \to G$. This, together with Lemma~\ref{lem:funcproduct}.\ref{l:adjoint}, implies \ref{l:iff2}.
	Applying $\PathCF{k}$ to both sides of the assumption $\PathRF{k}(G) \to \PathRF{k}(H)$ thus yields the non-trivial direction of \ref{l:iff}.

	For \ref{l:mult}, one direction follows from Lemma~\ref{lem:funcproduct}.\ref{l:rightmult}.
	For the other, suppose $\PathRF{k}(K)$ is multiplicative.
	Let $G \times H \to K$. Then $\PathRF{k}(G) \times \PathRF{k}(H) \to \PathRF{k}(G \times H) \to \PathRF{k}(K)$,
	hence $\PathRF{k}(G) \to \PathRF{k}(K)$ or $\PathRF{k}(H) \to \PathRF{k}(K)$,
	which by~\ref{l:iff} implies $G \to K$ or $H \to K$.
\end{proof}

\section{Multiplicativity of graphs and spaces}\label{sec:mult}
\subsection*{Known cases of multiplicativity}
As a warm-up, let us give an elementary proof of the multiplicativity of the 0-dimensional sphere $\Sphere^0$ (two points $-1,1$ on the real line, with the $\ZZ_2$-action swapping them), in the following two lemmas.
(The first will be crucial to the multiplicativity of $\Sphere^1$ as well).

\begin{lemma}\label{lem:S0S1}
	Let $X$ be a $\ZZ_2$-space.
	Then $\Sphere^1 \not\to_{\ZZ_2} X$ if and only if $X \to_{\ZZ_2} \Sphere^0$.
\end{lemma}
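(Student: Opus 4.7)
The easy direction is immediate from Borsuk--Ulam: if both $\Sphere^1 \to_{\ZZ_2} X$ and $X \to_{\ZZ_2} \Sphere^0$ existed, composing them would yield a $\ZZ_2$-map $\Sphere^1 \to_{\ZZ_2} \Sphere^0$, contradicting the Borsuk--Ulam theorem recalled in the introduction. So I only need to prove the contrapositive of the other direction: assume $X \not\to_{\ZZ_2} \Sphere^0$ and produce a $\ZZ_2$-map $\Sphere^1 \to_{\ZZ_2} X$.

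The key elementary observation is the following characterization: a $\ZZ_2$-map $X \to \Sphere^0 = \{-1,+1\}$ exists if and only if no connected component of $X$ is $\ZZ_2$-invariant. Indeed, since $\Sphere^0$ is discrete, such a map is constant on each component, so if every component $C$ satisfies $\nu(C) \neq C$ we can pair up components and send each pair to $\{-1,+1\}$; conversely, if $\nu(C) = C$ for some connected component $C$, any continuous $f \colon C \to \Sphere^0$ is constant and hence cannot satisfy $f(\nu(x)) = -f(x)$. (This uses only that $X$ is a finite simplicial complex, so $\pi_0(X)$ is finite and the above selection is well-defined.)

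Consequently, the hypothesis $X \not\to_{\ZZ_2} \Sphere^0$ gives a connected component $C \subseteq X$ with $\nu(C) = C$. Since $X$ is a simplicial complex, $C$ is path-connected; pick any $x \in C$ and a path $\gamma \colon [0,1] \to C$ from $x$ to $-x$. I realize $\Sphere^1$ as $\RR/\ZZ$ with $\ZZ_2$-action $t \mapsto t + \tfrac{1}{2}$ and define
\[
f(t) = \begin{cases} \gamma(2t) & \text{if } t \in [0,\tfrac{1}{2}], \\ -\gamma(2t-1) & \text{if } t \in [\tfrac{1}{2},1]. \end{cases}
\]
Well-definedness at $t = \tfrac{1}{2}$ follows from $\gamma(1) = -x = -\gamma(0)$, and at $t = 0 \sim 1$ from $-\gamma(1) = -(-x) = x = \gamma(0)$; equivariance $f(t+\tfrac{1}{2}) = -f(t)$ is a direct check on each half. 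Composing with the inclusion $C \hookrightarrow X$ gives the desired $\ZZ_2$-map $\Sphere^1 \to_{\ZZ_2} X$.

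The only mild subtlety is the equivalence between non-existence of $X \to_{\ZZ_2} \Sphere^0$ and the existence of a $\nu$-invariant connected component; this is the main conceptual step, and it relies on finiteness of $\pi_0(X)$ (automatic for finite simplicial complexes, as assumed in the paper). Everything else is a one-line application of Borsuk--Ulam and an explicit loop construction.
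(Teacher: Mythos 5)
Your proof is correct and follows essentially the same route as the paper's: both directions hinge on whether the $\ZZ_2$-action fixes a (path-)connected component of $X$, with the map $\Sphere^1 \to_{\ZZ_2} X$ obtained by concatenating a path from $x$ to $-x$ with its antipodal image, and the map $X \to_{\ZZ_2} \Sphere^0$ obtained by pairing up the non-invariant components. The only cosmetic difference is that you dispatch the easy direction by composing and citing Borsuk--Ulam for $\Sphere^1 \not\to_{\ZZ_2} \Sphere^0$, whereas the paper argues it directly from the constancy of continuous maps from a connected set into the discrete space $\Sphere^0$ --- the underlying reason is the same.
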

\begin{proof}
	Let $p \colon \Sphere^1 \to_{\ZZ_2} X$ be a $\ZZ_2$-map.
	Then $p$ on one half of $\Sphere^1$ gives a path $p'\colon [0,1] \to X$ from some point $p(0)=x \in X$ to $p(1)=-x$.
	If there was a map $f\colon X \to_{\ZZ_2} \Sphere^0$, then each (path-)connected component of $X$ would have to map all into $-1$ or all into $1\in \Sphere^0$, in particular $f(x)=f(-x)$, a contradiction.

	For the other direction, assume $\Sphere^1 \not\to_{\ZZ_2} X$.
	Then there is no path $p\colon [0,1] \to X$ from a point $x \in X$ to $-x$, since concatenating such a path $t \mapsto p(t)$ with $t \mapsto -p(t)$ gives a $\ZZ_2$-map $\Sphere^1 \to_{\ZZ_2} X$.
	Therefore, the $\ZZ_2$-action $-$ matches pairs of (path-)connected components of $X$.
	We can choose a map that maps one component of each pair into $-1$ and the other into $1$, giving a $\ZZ_2$-map $X \to_{\ZZ_2} \Sphere^0$.
\end{proof}

To translate the above proof to graphs, recall that the antipode of $\vL{v}$ in the box complex of a graph $G$ (for a vertex $v$ of $G$) is $\vR{v}$ and observe that there is a path from $\vL{v}$ to $\vR{v}$ in the box complex if and only if there is a walk of odd length in the graph $G$ from $v$ to $v$ itself.
That is, `equivariant' circles in the box complex, represented as $\ZZ_2$-maps $\Sphere^1 \to_{\ZZ_2} X$, correspond to odd closed walks in the graph.
The above lemma then corresponds to the fact that a graph has no odd closed walks (equivalently, no odd cycles) if and only if it has a homomorphism to $K_2$ (equivalently, it is bipartite).
The proof can also be made entirely analogous, by considering connected components of $G \times K_2$.
We proceed with a proof of multiplicativity.

\begin{lemma}\label{lem:S0}
$\Sphere^0$ is multiplicative.
That is, for any $\ZZ_2$-spaces $X,Y$, if $X \times Y \to_{\ZZ_2} \Sphere^0$, then $X\to_{\ZZ_2} \Sphere^0$ or $Y\to_{\ZZ_2} \Sphere^0$.
\end{lemma}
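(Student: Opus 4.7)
The plan is to prove the contrapositive, using Lemma~\ref{lem:S0S1} as the key ingredient in both directions. Assume $X \not\to_{\ZZ_2} \Sphere^0$ and $Y \not\to_{\ZZ_2} \Sphere^0$; the goal is to show $X \times Y \not\to_{\ZZ_2} \Sphere^0$. By Lemma~\ref{lem:S0S1} applied to each factor, the non-existence of a $\ZZ_2$-map to $\Sphere^0$ is equivalent to the existence of a $\ZZ_2$-map from $\Sphere^1$. So I get $\ZZ_2$-maps $f \colon \Sphere^1 \to_{\ZZ_2} X$ and $g \colon \Sphere^1 \to_{\ZZ_2} Y$.

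The core step is then to combine these into a single map. Define $h \colon \Sphere^1 \to X \times Y$ by $h(s) := (f(s), g(s))$. This is continuous as a product of continuous maps, and it is equivariant because $h(-s) = (f(-s), g(-s)) = (-f(s), -g(s)) = -(f(s), g(s)) = -h(s)$, using the definition of the $\ZZ_2$-action on the product space. Therefore $\Sphere^1 \to_{\ZZ_2} X \times Y$.

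Finally, applying the other direction of Lemma~\ref{lem:S0S1} to $Z := X \times Y$, the existence of such a map from $\Sphere^1$ precludes the existence of a $\ZZ_2$-map $X \times Y \to_{\ZZ_2} \Sphere^0$, which is exactly what the contrapositive required. There is no real obstacle here: once Lemma~\ref{lem:S0S1} is available, the argument is essentially the trivial observation that the class of $\ZZ_2$-spaces admitting a $\ZZ_2$-map from $\Sphere^1$ is closed under taking products (by diagonal pairing). The multiplicativity of $\Sphere^0$ is thus a two-line corollary of the preceding lemma.
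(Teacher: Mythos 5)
Your proof is correct and follows essentially the same route as the paper: apply Lemma~\ref{lem:S0S1} to each factor to obtain $\ZZ_2$-maps from $\Sphere^1$, pair them diagonally into a $\ZZ_2$-map $\Sphere^1 \to_{\ZZ_2} X \times Y$, and conclude via Lemma~\ref{lem:S0S1} again. No issues.
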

\begin{proof}
	Suppose that $X\not\to_{\ZZ_2} \Sphere^0$  and $Y\not\to_{\ZZ_2} \Sphere^0$.
	By Lemma~\ref{lem:S0S1}, there are $\ZZ_2$-maps $p\colon \Sphere^1 \to_{\ZZ_2} X$ and $q\colon \Sphere^1 \to_{\ZZ_2} Y$.
	But then $t \mapsto (p(t),q(t))$ is a $\ZZ_2$-map $\Sphere^1 \to_{\ZZ_2} X \times Y$ (since by definition of the product of $\ZZ_2$-spaces, $-(p(t),q(t))=(-p(t),-q(t))=(p(-t),q(-t))$).
	Hence $X \times Y \not\to_{\ZZ_2} \Sphere^0$.
\end{proof}

The multiplicativity of $K_2$ is  a simple translation of this proof: if $G \not\to K_2$ and $H \not\to K_2$, then there are odd closed walks $P=(P_1,\dots,P_n)$ ($P_i \in V(G)$) in $G$ and $Q=(Q_1,\dots,Q_m)$ in $H$. 
We can turn them into odd closed walks of equal length, say $P'=P$ and $Q'=(Q_1,\dots,Q_m,Q_{m-1},Q_m,\dots,Q_{m-1},Q_m)$ if $n\leq m$.
Thus $((P'_1,Q'_1),(P'_2,Q'_2),\dots)$ is an odd closed walk in $G \times H$, hence $G \times H \not\to K_2$.

\medskip
After this warm-up, let us turn to the circle $\Sphere^1$.
We first show that in the definition of multiplicativity, we can assume spaces to be path-connected.
\begin{lemma}\label{lem:conn}
	Let $Z$ be a non-empty $\ZZ_2$-space such that for any path-connected $\ZZ_2$-spaces $X,Y$,
	$X\times Y \to_{\ZZ_2} Z$ implies $X\to_{\ZZ_2} Z$ or $Y\to_{\ZZ_2} Z$.
	Then $Z$ is multiplicative.
\end{lemma}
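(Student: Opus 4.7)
The plan is to decompose each $\ZZ_2$-space into its ``symmetric'' path-connected components (those fixed setwise by the action) and its ``asymmetric'' ones (swapped in pairs by the action), dispose of the asymmetric parts cheaply, apply the path-connected hypothesis to the symmetric parts, and reassemble. Formally, for any $\ZZ_2$-space $W$ (coming from a finite simplicial complex) write $W = W_s \sqcup W_a$, where $W_s$ is the disjoint union of path-connected components fixed setwise by the $\ZZ_2$-action and $W_a$ is the union of pairs $A \sqcup (-A)$ of swapped components; both are clopen $\ZZ_2$-subspaces.

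The key preliminary observation is that for any non-empty $\ZZ_2$-space $Z$, the asymmetric part $W_a$ always admits a $\ZZ_2$-map to $Z$: indeed $W_a \to_{\ZZ_2} \Sphere^0$ by sending each swapped pair to $\{-1,+1\}$, and $\Sphere^0 \to_{\ZZ_2} Z$ by picking any $z_0 \in Z$ and setting $\pm 1 \mapsto \mp z_0$. Consequently, given $f\colon X \times Y \to_{\ZZ_2} Z$, if $X_s$ (or $Y_s$) is empty we are immediately done, so we may assume both $X_s$ and $Y_s$ are non-empty. For each path-connected component $A \subseteq X_s$ and each $B \subseteq Y_s$, the subspace $A \times B$ is a path-connected $\ZZ_2$-space, and the restriction of $f$ gives $A \times B \to_{\ZZ_2} Z$; the hypothesis then yields $A \to_{\ZZ_2} Z$ or $B \to_{\ZZ_2} Z$. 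A simple bipartite argument now shows that either every $A \subseteq X_s$ maps to $Z$, or every $B \subseteq Y_s$ does: for if some $A_0$ does not, then every $B$ must. Without loss of generality the former holds.

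Finally, we combine the pieces. The individual maps $A \to_{\ZZ_2} Z$ for $A$ ranging over the components of $X_s$ glue to a $\ZZ_2$-map $X_s \to_{\ZZ_2} Z$ since the $A$'s are disjoint clopen; together with the map $X_a \to_{\ZZ_2} Z$ from the preliminary observation this yields $X = X_s \sqcup X_a \to_{\ZZ_2} Z$. The argument is entirely elementary, and there is no serious obstacle: finite simplicial complexes have only finitely many path-connected components, each clopen, so checking continuity of the glued map is routine, and the non-emptiness assumption on $Z$ is used precisely to neutralize the asymmetric parts via $\Sphere^0$.
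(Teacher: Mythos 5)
Your proposal is correct and follows essentially the same route as the paper: decompose each space into $\ZZ_2$-invariant path-components and swapped pairs, observe that the swapped pairs always admit a $\ZZ_2$-map to the non-empty $Z$, and reduce to the path-connected hypothesis componentwise. The paper phrases the final step contrapositively (picking one bad component of $X$ and one of $Y$ and noting their product cannot map to $Z$), whereas you argue directly with a bipartite case analysis and glue, but this is only a cosmetic difference.
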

\begin{proof}
	The $\ZZ_2$-action on $X$ maps each (path-connected) component into a component.
	Let us call components that are mapped to themselves \emph{$\ZZ_2$-components}.
	Components that are not mapped to themselves are matched in pairs by the $\ZZ_2$-action.
	Every such pair of components can be trivially $\ZZ_2$-mapped into $Z$ by mapping all of one component into one arbitrary point of $Z$ and all of the other component into the $\ZZ_2$-image of that point.
	Therefore, $X$ admits a $\ZZ_2$-map to $Z$ if and only if each $\ZZ_2$-component of $X$ does.
	Suppose $X\not\to_{\ZZ_2} Z$  and $Y\not\to_{\ZZ_2} Z$, that is, 
	some $\ZZ_2$-components $X'$ of $X$ and $Y'$ of $Y$ do not admit a $\ZZ_2$-map to $Z$.
	By the assumption on $Z$, $X \times Y \supseteq X' \times Y' \not\to_{\ZZ_2} Z$.
	Hence $Z$ is multiplicative.
\end{proof}

\begin{lemma}\label{lem:S1}
$\Sphere^1$ is multiplicative.
That is, for any $\ZZ_2$-spaces $X,Y$, if $X \times Y \to_{\ZZ_2} \Sphere^1$, then $X\to_{\ZZ_2} \Sphere^1$ or $Y\to_{\ZZ_2} \Sphere^1$.
\end{lemma}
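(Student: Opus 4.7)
The plan is to prove the contrapositive for path-connected $X,Y$, which suffices by Lemma~\ref{lem:conn}: assuming $X\not\to_{\ZZ_2}\Sphere^1$ and $Y\not\to_{\ZZ_2}\Sphere^1$, I will derive that no $\ZZ_2$-map $X\times Y\to_{\ZZ_2}\Sphere^1$ can exist. The main tool, from covering space theory, is the characterization: for a path-connected free $\ZZ_2$-space $W$, a $\ZZ_2$-map $W\to_{\ZZ_2}\Sphere^1$ exists if and only if the classifying homomorphism $\omega_W\colon\pi_1(W/\ZZ_2)\to\ZZ_2$ of the double cover $W\to W/\ZZ_2$ admits a lift to a homomorphism $\pi_1(W/\ZZ_2)\to\ZZ$ (this follows from $\Sphere^1$ being a $K(\ZZ,1)$; equivalently, a $\ZZ_2$-map amounts to a coherent assignment of half-integer winding numbers to paths from $w$ to $-w$).

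Next I would analyze $Z:=(X\times Y)/\ZZ_2$ with the diagonal action. Lifting loops through the double cover $Z\to X/\ZZ_2\times Y/\ZZ_2$ identifies $\pi_1(Z)$ with the index-two subgroup $H:=\{(a,b):\omega_X(a)=\omega_Y(b)\}$ of $\pi_1(X/\ZZ_2)\times\pi_1(Y/\ZZ_2)$, and the cover class of $X\times Y\to Z$ becomes $\omega_{X\times Y}(a,b)=\omega_X(a)$. A hypothetical $f\colon X\times Y\to_{\ZZ_2}\Sphere^1$ descends to $\bar{f}\colon Z\to\Sphere^1$, and its winding-number homomorphism $\alpha:=\bar{f}_*\colon H\to\ZZ$ then satisfies $\alpha\equiv\omega_{X\times Y}\pmod 2$, because pulling back the antipodal double cover of $\Sphere^1$ along $\bar f$ recovers $X\times Y\to Z$.

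The crux of the proof is to use $\alpha$ to build an integer lift of $\omega_X$ or $\omega_Y$, contradicting the hypothesis. If either is trivial, take the zero lift. Otherwise pick $\delta_X\in\pi_1(X/\ZZ_2)$, $\delta_Y\in\pi_1(Y/\ZZ_2)$ with $\omega_X(\delta_X)=\omega_Y(\delta_Y)=1$, so $\delta:=(\delta_X,\delta_Y)\in H$ while $\delta_X^2\in\pi_1(X)$ and $\delta_Y^2\in\pi_1(Y)$. Since $\alpha$ is even on $\pi_1(X)\times\{1\}$ and on $\{1\}\times\pi_1(Y)$, the integers $p:=\alpha(\delta_X^2,1)/2$ and $q:=\alpha(1,\delta_Y^2)/2$ are well defined. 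The computation $2\alpha(\delta)=\alpha(\delta^2)=\alpha(\delta_X^2,1)+\alpha(1,\delta_Y^2)=2p+2q$ combined with $\alpha(\delta)\equiv\omega_{X\times Y}(\delta)=1\pmod 2$ yields that $p+q$ is odd, so exactly one of $p,q$ is odd. If $p$ is odd, set $\tilde\alpha_X|_{\pi_1(X)}:=\alpha(\cdot,1)$ and $\tilde\alpha_X(\delta_X):=p$; this defines a homomorphism $\pi_1(X/\ZZ_2)\to\ZZ$ (the defining relation $\delta_X^2\in\pi_1(X)$ is respected because $\alpha(\delta_X^2,1)=2p$, and conjugation of $(a,1)$ by $\delta$ inside $H$ forces $\alpha(\cdot,1)$ to be $\delta_X$-conjugation-invariant, as required of any $\ZZ$-valued homomorphism), and its mod-$2$ reduction agrees with $\omega_X$, giving $X\to_{\ZZ_2}\Sphere^1$. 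The case $q$ odd is symmetric and gives $Y\to_{\ZZ_2}\Sphere^1$.

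The main obstacle will be the first-paragraph characterization of $\ZZ_2$-maps into $\Sphere^1$; this is where covering space theory genuinely enters and must be either proven or cited carefully. Once it is in place, the rest is a clean parity argument whose symmetry in $X$ and $Y$ is precisely the reason $\Sphere^1$ is multiplicative, and a minor technical point to check is that $\pi_1(X/\ZZ_2)$ may be non-abelian, though this causes no trouble since $\alpha$ and $\tilde\alpha_X$ are $\ZZ$-valued and hence factor through the abelianizations.
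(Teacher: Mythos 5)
Your argument is correct, but it takes a genuinely different route from the paper's --- it is essentially the covering-space proof that the paper attributes to Matsushita~\cite{Matsushita17}, rather than the elementary one given here. The two proofs share their arithmetic core: your integers $p=\alpha(\delta_X^2,1)/2$ and $q=\alpha(1,\delta_Y^2)/2$ are exactly the winding numbers of the loops $p'$ and $q'$ in the paper's proof (since $\alpha$ restricted to $\pi_1(X\times Y)$ is twice the winding-number homomorphism of $f$), and the step ``$p+q=\alpha(\delta)\equiv 1\pmod 2$, hence exactly one of $p,q$ is odd'' is the same parity argument. Where you diverge is in converting ``$p$ is odd'' into a $\ZZ_2$-map $X\to_{\ZZ_2}\Sphere^1$: you invoke the classification of maps into $K(\ZZ,1)$ and extend $\alpha(\cdot,1)$ from $\ker\omega_X$ to all of $\pi_1(X/\ZZ_2)$ by sending $\delta_X\mapsto p$ (your check that conjugation by $\delta$ forces the required invariance is the right and necessary verification, and the extension is well defined precisely because $\alpha(\delta_X^2,1)=2p$), whereas the paper stays low-tech: it locates coincidence points $f(x,y_0)=f(x,-y_0)$, applies Lemma~\ref{lem:S0S1} to their complement, and glues the map to $\Sphere^1$ by hand. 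Your version is shorter and more conceptual once the lifting criterion is granted; the paper's version is deliberately elementary because, as discussed right after Lemma~\ref{lem:S1}, it is the coincidence-point argument that transfers to graph homomorphisms (El-Zahar--Sauer and its generalizations), while the covering-space step has no direct combinatorial analogue. Two small points to patch in your write-up: the lifting criterion needs $X$ and $Y$ free (if, say, $X$ has a fixed point $x_0$, then $Y\cong\{x_0\}\times Y\to_{\ZZ_2} X\times Y\to_{\ZZ_2}\Sphere^1$ at once, so this case is trivial but should be dispatched separately), and the case ``$\omega_X$ trivial'' cannot occur for a path-connected free $X$, since the connectedness of the double cover $X\to X/\ZZ_2$ forces $\omega_X$ to be onto.
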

\begin{proof}
	Let $f \colon X \times Y \to_{\ZZ_2} \Sphere^1$ be a $\ZZ_2$ map (that is, $-f(x,y)=f(-x,-y)$ for all $x\in X, y\in Y$).
	By Lemma~\ref{lem:conn} we can assume that $X$ and $Y$ are (path-)connected.
	Fix arbitrary points $x_0 \in X, y_0 \in Y$.

	Consider any $\ZZ_2$-maps $p \colon \Sphere^1 \to_{\ZZ_2} X$ and $q\colon \Sphere^1 \to_{\ZZ_2} Y$ starting (and ending) at $x_0$ and $y_0$, respectively.
	For $t \in \Sphere^1$, let $p'(t) := f(p(t),y_0)$ and $q'(t) := f(x_0,q(t))$.
	The functions $p',q'$ are continuous maps from $\Sphere^1$ to $\Sphere^1$ (not necessarily $\ZZ_2$-maps).
	Since the concatenation of the path $p(t)$ with the constant path $t \mapsto x_0$ is homotopic to $p(t)$ (and similarly for $q(t)$ and $y_0$), the concatenation of the paths $t \mapsto (p(t), y_0)$ and $t\mapsto (x_0,q(t))$ in $X \times Y$ is homotopic to $t \mapsto (p(t),q(t))$.
	Therefore the concatenation of $p'(t)=f(p(t),y_0)$ and $q'(t) = f(x_0,q(t))$ is homotopic to $t \mapsto f(p(t),q(t))$.
	Thus the winding numbers of $p'$ and $q'$ sum to the winding number of $t \mapsto f(p(t),q(t))$.
	The latter is a $\ZZ_2$-map (because $f,p,q$ are) and hence has odd winding number.
	Therefore exactly one of the winding numbers of $p'$ and $q'$ is odd.
	Without loss of generality suppose the winding number of $p'$ is odd and the winding number of $q'$ is even.
	Then the winding number of $p'$ is odd for any choice of  $p \colon \Sphere^1 \to_{\ZZ_2} X$ starting and ending at $x_0$, as we can keep the choice of $q,q'$ unchanged (with even winding number).

	The same holds for every $p \colon \Sphere^1 \to_{\ZZ_2} X$ which does not start and end at $x_0$, since $p$ is then homotopic to a $\ZZ_2$-map that does. (Namely, by path-connectedness of $X$, there is some path $s$ from the endpoint $x$ of $p$ to $x_0$;
	let~$r$ denote the first half of $p$ -- a path from $x$ to $-x$; let $s^{-1}$ denote the path $s$ reversed and let $-s$ be its $\ZZ_2$-image; then $p$ is homotopic to the concatenation of $s^{-1}, r, -s, -s^{-1}, -r, s$). The homotopy preserves the (odd) winding number of $p'$.
	
	For any $p \colon \Sphere^1 \to_{\ZZ_2} X$,
	since the winding number of $p'$ is odd, there is a point $t \in \Sphere^1$ such that $p'(-t) = -p'(t)$.
	That is $f(p(-t),y_0) = -f(p(t),y_0)$.
	Let us call a point $x\in X$ a \emph{coincidence point} if $f(x,y_0) = -f(-x,y_0)$ (equivalently, $f(x,y_0) = f(x,-y_0)$).
	Let $X' \subseteq X$ be the set of coincidence points (observe that if $x \in X'$, then $-x \in X'$ as well).
	Then we know that there is no $\ZZ_2$-map $p \colon \Sphere^1 \to_{\ZZ_2} X \setminus X'$.
	Therefore, there is a $\ZZ_2$-map $h\colon X \setminus X' \to \Sphere^0$.
	We can then define a $\ZZ_2$-map from $X$ to $\Sphere^1$ as follows:
		if $x \in X\setminus X'$, we map $x$ to $f(x,-y_0)$ or to $f(x,y_0)$ depending on $h(x)\in \{-1,1\}$;
		otherwise, if $x \in X'$, we map $x$ to $f(x,y_0)=f(x,-y_0)$.
	This is easily checked to give a $\ZZ_2$-map from $X$ to $\Sphere^1$.
\end{proof}

The proof of the multiplicativity of $K_3$ by El-Zahar and~Sauer~\cite{El-ZaharS85}, its generalization to odd cycles by H{\"{a}}ggkvist et~al.~\cite{HaggkvistHMN88}, and especially its reformulation and generalization to  circular cliques $K_{p/q}$ (with $2<p/q<4$) given in~\cite{Wrochna17}, largely follows the steps of the above proof of Lemma~\ref{lem:S1}.
An invariant on odd cycles is considered, which turns out to be exactly the winding number assigned as above to the corresponding map $\Sphere^1 \to_{\ZZ_2} \BBx{G}$.
One then proves that all odd cycles on one side of the product must have an odd invariant, which implies that certain coincidence points must exist on every such cycle (this part can be done just as above, purely topologically, in the box complex).
If those coincidence points occur on vertices of the box complex (corresponding to vertices of the graph), as opposed to some general position on edges or larger simplices, then they can be temporarily removed to conclude a homomorphism just as above. 
The only additional step is thus showing that the coincidence points can be assumed to lie on vertices, which is indeed true and not hard to show for $K_3$ and odd cycles.
However, for circular cliques a certain relaxation of this notion is necessary (but still possible, see~\cite{Wrochna17}), while for other graphs $G$ with $\BBx{G} \zeq \Sphere^1$ we do not know whether this approach can work at all, indeed we do not know whether all such graphs are multiplicative.

\medskip
The multiplicativity of square-free graphs shown in~\cite{Wrochna17} corresponds to the multiplicativity of 1-dimensional $\ZZ_2$-spaces with free $\ZZ_2$-action, that is, those coming from simplicial complexes with no simplices of size larger than~2.
However, all such spaces can be shown to admit a $\ZZ_2$-map to $\Sphere^1$ (by taking a spanning tree of the quotient by the $\ZZ_2$-action, contracting its two preimages to points and mapping the remaining edges appropriately).
Because of that, their multiplicativity easily follows from that of $\Sphere^1$. (Indeed, either they admit a $\ZZ_2$-map \emph{from} $\Sphere^1$ as well, in which case they are essentially equivalent to $\Sphere^1$, as far as multiplicativity is concerned; or they do not, in which case they admit a homomorphism to $\Sphere^0$ by Lemma~\ref{lem:S0S1} and from $\Sphere^0$, unless empty).
This reasoning does not extend to the combinatorial setting, unfortunately.
The proof in~\cite{Wrochna17} instead uses some stronger topological properties, essentially allowing to lift a map to a covering space which is then contracted.
The translation to graphs, using homomorphisms instead of on $\ZZ_2$-maps, is considerably more technical than for $\Sphere^1$.

\subsection*{Obstacles and non-tidy spaces}
When attempting to generalize the above proofs to higher dimensional spheres, even just to~$\Sphere^2$, while some steps do extend (the arguments on the parity of winding numbers, in particular), there are nevertheless substantial obstacles.
Perhaps the most important is the fact that Lemma~\ref{lem:S0S1} becomes false: there are $\ZZ_2$-spaces $X$ such that $\Sphere^2 \not\to_{\ZZ_2} X$, but $X \not\to_{\ZZ_2} \Sphere^1$.

This gap can in fact get much worse.
Consider the following two parameters of a $\ZZ_2$-space $X$.
The \emph{coindex}\marginpar{$\ind$\\$\coind$} $\coind(X)$ is the largest $n$ such that $\Sphere^n \to_{\ZZ_2} X$.
The \emph{index} is the least $n$ such that $X \to_{\ZZ_2} \Sphere^n$.
The Borsuk-Ulam theorem states that $\coind(X) \leq \ind(X)$.
These parameters are analogous to the clique number $\omega(G)$ (the size of the largest clique subgraph) and the chromatic number $\chi(G)$ of a graph $G$. In fact:
\vspace*{-9pt}
$$ \omega(G)\ \leq\ \coind(\BBx{G}) + 2\ \leq\ \ind(\BBx{G}) + 2\ \leq\ \chi(G).\vspace*{-5pt}$$

Spaces where the coindex is strictly smaller than the index are called \emph{non-tidy} (see~\cite{matousek2008using}, p. 100).
Lemma~\ref{lem:S0S1} states that the coindex is 0 if and only if the index is 0, so non-tidy spaces are counter-examples to its generalization, and thus a significant problem when attempting to extend known cases of Hedetniemi's conjecture. 
Moreover, Conjecture~\ref{conj:main} is equivalent to the statement that, for all $\ZZ_2$-spaces $X,Y$:
\vspace*{-9pt}
$$\ind(X \times Y) = \min(\ind(X),\ind(Y)).\vspace*{-5pt}$$
Since the inequality $\ind(X \times Y) \leq \min(\ind(X), \ind(Y))$ is trivial and since $X \to_{\ZZ_2} Y$ easily implies the other direction, any counter-example to Conjecture~\ref{conj:main} must satisfy $X \not\to_{\ZZ_2} Y$ and $Y \not\to_{\ZZ_2} X$.
Since $\coind(X) \geq \ind(Y)$ implies $Y \to_{\ZZ_2} \Sphere^{\ind(Y)} \to_{\ZZ_2} X$, any counter-example to Conjecture~\ref{conj:main} must involve a non-tidy space. 

Non-tidy spaces are not so easy to come by, at least for a combinatorialist, but a few examples are known.
The following (and others: Stiefel manifolds, constructions using the Hopf map) are discussed in more detail in Matou\v{s}ek's book~\cite{matousek2008using} and in Chapter~3 of Csorba's thesis~\cite{csorbaThesis} devoted to the topic.
The simplest is perhaps the torus with two holes (that is, the 2-dimensional orientable surface of genus 2, with $\ZZ_2$-action $x \mapsto -x$ in a symmetric embedding in $\RR^3$, i.e., swapping the holes)
which has coindex 1 and index 2, that is, $\Sphere^2 \not\to_{\ZZ_2} X$, but $X \not\to_{\ZZ_2} \Sphere^1$.
Real projective spaces  (with an appropriate $\ZZ_2$-action) provide examples with the worst possible gap: they have coindex 1 and arbitrarily high index, that is, $\Sphere^2 \not\to_{\ZZ_2} X$, but $X \not\to_{\ZZ_2} \Sphere^n$, for an arbitrarily high $n$ (the index has been computed by Stolz~\cite{stolz1989level}, see also an exposition in~\cite{pfister1995quadratic}).
Matsushita~\cite{MatsushitaTidy} proved an even stronger example where not only the index is arbitrarily high, but so is a cohomological lower bound of it; his proof also uses considerably fewer tools of algebraic topology.

The  dual to Conjecture~\ref{conj:main}, namely $\coind(X\times Y) = \min(\coind(X), \coind(Y))$, has been considered by Simonyi and Zsb{\'{a}}n~\cite{SimonyiZ10}.
This statement is trivial in topology, that is, $\Sphere^n \to_{\ZZ_2} X \times Y$ if and only if $\Sphere^n \to_{\ZZ_2} X$ and  $\Sphere^n \to_{\ZZ_2} Y$.
However, they showed that  $\coind \BBx{G \times H} = \min(\coind \BBx{G} , \coind \BBx{H})$ (without resorting to $\BBx{G} \times \BBx{H} \zeq \BBx{G \times H}$), which implies that Hedetniemi's conjecture is true on all graphs for which the topological bound on the chromatic number $\coind(\BBx{G}) + 2 \leq \chi(G)$ is tight.
Conjecture~\ref{conj:main} would imply that tightness of the bound $\ind(\BBx{G}) + 2$ would suffice.

In topological literature on the index (see e.g. \cite{Yang54,Yang55,ConnerF60,ConnerF62,Ucci72,Tanaka03}), $\ZZ_2$-maps are usually called equivariant maps. The names `coindex' and `index' are usually swapped with respect to their usage in (topological) combinatorics.
The index has also been called the B-index, level, genus.
Nevertheless, the only mention of the index of products of spaces seems to be~\cite{kaur2013uniform}.
\looseness=-1

We note that the index has important applications in algebra, see~\cite{DaiLP80,DaiL84}; Dai and Lam proved a crucial connection and stated a question~\cite[(11.2)]{DaiL84} about tensor products of commutative $\RR$-algebras that is closely related, via this connection, to Conjecture~\ref{conj:main}. The \emph{level} $s(A)$ of an algebra $A$ is the least $n$ such that $-1$ can be represented as the sum of $n$ squares: $-1=a_1^2+\dots+a_n^2$ for some $a_i \in A$. The question is whether $s(A \otimes_{\RR} B) = \min(s(A), s(B))$, for all commutative $\RR$-affine algebras $A,B$.
As far as we know, this question has not been explored further.\looseness=-1

\subsection*{Open questions}
For a topologist, the main question stemming from this work is of course Conjecture~\ref{conj:main}.
Even though we state it as a conjecture, we have no serious reason to believe it to be true.
In fact so little is known that any partial result would be interesting.
In particular, is $\Sphere^2$, or really any non-1-dimensional $\ZZ_2$-space, multiplicative?
An example of a $\ZZ_2$-space that is \emph{not} multiplicative is $X\times Y$ for any $X,Y$ such that $X \not\to_{\ZZ_2} Y$ and $Y \not\to_{\ZZ_2} X$; can any other examples be given?
As far as we know, it could even turn out that known non-tidy spaces provide relatively simple counter-examples to Conjecture~\ref{conj:main} and hence to Hedetniemi's conjecture.
Can one compute the index of some non-trivial products involving non-tidy spaces?
How about some subspaces of the space of maps from $\Sphere^2$ to $\Sphere^2$?

Closer to combinatorics, we ask how close can the connection with topology be.
Is every graph $K$ with $\BBx{K} \zeq \Sphere^1$ multiplicative?
All known examples suggest so, but very little is known on graphs that are not multiplicative, so any new method for disproving multiplicativity would be interesting.
Beside taking $K=G\times H$ for graphs such that $G \not\to H$ and $H \not\to G$, the only construction known to the author comes from Kneser graphs, see~\cite{TardifZ02a}.

Finally, do other functors have similar properties to $\PathRF{k}$, in particular do all ``adjoint fractional powers'' of the form $\PathCF{\ell}(\PathRF{k}(\cdot))$ with $l < k$ preserve the homotopy type (as in the Equivalence Theorem~\ref{thm:equiv})? How about right adjoints to the arc graph construction?
Can the properties be derived from more general principles?

\section{Proof of Theorem~\ref{thm:multImplies} and Theorem~\ref{thm:characterization}}\label{sec:multproof}
We show how the Theorem~\ref{thm:multImplies}, restated here, easily follows from the Equivalence, Approximation and Universality Theorems.
(Recall that a $\ZZ_2$-space $Z$ is \emph{multiplicative} if $X \times Y \to_{\ZZ_2} Z$ implies $X\to_{\ZZ_2} Z$ or $Y\to_{\ZZ_2} Z$ for any $\ZZ_2$-spaces $X,Y$.)

\def\rrrr{-4pt}
\begin{theorem}
Let $K$ be a multiplicative graph. Then $\BBx{K}$ is multiplicative.
\end{theorem}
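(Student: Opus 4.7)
The plan is to reduce to the free case and then chain together the Universality, box-product, Approximation, and ``right adjoints commute with products'' results, with the multiplicativity of $K$ invoked at the one combinatorial step in the middle. First I would dispose of two degenerate cases. If $K$ has a loop at some vertex $v$, then $\{\vL{v},\vR{v}\}$ is a simplex of $\Bx{K}$ whose barycentre is a $\ZZ_2$-fixed point, so the constant map to that point is a $\ZZ_2$-map from anything, and $\Bx{K}$ is trivially multiplicative. Otherwise $\Bx{K}$ is free; in that case, if $X$ has a fixed point $x_0$, then $y \mapsto (x_0,y)$ is a $\ZZ_2$-embedding $Y \hookrightarrow X \times Y$ (since $-x_0=x_0$ forces $-(x_0,y)=(x_0,-y)$), so composing with the hypothesis $f \colon X \times Y \to_{\ZZ_2} \Bx{K}$ gives $Y \to_{\ZZ_2} \Bx{K}$ for free, and symmetrically if $Y$ has a fixed point. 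Hence I may assume both $X$ and $Y$ are free $\ZZ_2$-spaces.

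The main argument is now a sequence of translations between topology and graphs. By the Universality Theorem~\ref{thm:universality}, pick graphs $G, H$ with $\Bx{G} \zeq X$ and $\Bx{H} \zeq Y$. Composing the homotopy equivalences with $f$ and invoking Lemma~\ref{lem:boxproduct} ($\Bx{G} \times \Bx{H} \zeq \Bx{G \times H}$) yields a $\ZZ_2$-map $\Bx{G \times H} \to_{\ZZ_2} \Bx{K}$. The Approximation Theorem~\ref{thm:approximation} then produces an odd integer $k$ and a homomorphism $\PathRF{k}(G \times H) \to K$.

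Because $\PathRF{k}$ is a right adjoint (Lemma~\ref{lem:squarefree}.\ref{l:adj2}), Lemma~\ref{lem:funcproduct}.\ref{l:rightprod} supplies a homomorphism $\PathRF{k}(G) \times \PathRF{k}(H) \to \PathRF{k}(G \times H)$; composing gives $\PathRF{k}(G) \times \PathRF{k}(H) \to K$. The assumed multiplicativity of $K$ now forces $\PathRF{k}(G) \to K$ or $\PathRF{k}(H) \to K$. The easy direction of the Approximation Theorem transfers this back to topology as $\Bx{G} \to_{\ZZ_2} \Bx{K}$ or $\Bx{H} \to_{\ZZ_2} \Bx{K}$, and composing with the $\ZZ_2$-homotopy equivalences $X \zeq \Bx{G}$, $Y \zeq \Bx{H}$ concludes $X \to_{\ZZ_2} \Bx{K}$ or $Y \to_{\ZZ_2} \Bx{K}$.

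Conceptually, every ingredient is already prepared and the only thing to \emph{find} is the right chain; the key point is that $\PathRF{k}$ commuting with products is exactly what lets us ``push the product down'' to the combinatorial level where multiplicativity of $K$ is available, while the two applications of the Approximation Theorem (in both directions) move between topology and graphs without losing information up to $\ZZ_2$-homotopy. I do not expect any of the steps to pose a real obstacle; the most care is needed in the initial reduction to free spaces, since the Universality Theorem is stated only in that setting.
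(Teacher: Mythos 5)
Your proof is correct and follows essentially the same chain as the paper's: Universality, $\BBx{G}\times\BBx{H}\zeq\BBx{G\times H}$, the Approximation Theorem in one direction, commutation of the right adjoint $\PathRF{k}$ with products, multiplicativity of $K$, and the Approximation Theorem back. Your preliminary reduction to free $\ZZ_2$-spaces is a welcome extra bit of care (the paper applies the Universality Theorem without comment on freeness), but it does not change the substance of the argument.
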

\begin{proof}
Let $X,Y$, be finite simplicial $\ZZ_2$-spaces such that $X \times Y \to_{\ZZ_2} \BBx{K}$.
By the Universality Theorem~\ref{thm:universality}, there are graphs $G, H$ such that $X \zeq \BBx{G}$ and $Y \zeq \BBx{H}$.
Thus:\vspace*{\rrrr}
	$$\BBx{G \times H} \stackrel{\text{Lem~\ref{lem:boxproduct}}}{\zeq}
	\BBx{G} \times \BBx{H} \zeq X \times Y \to_{\ZZ_2} \BBx{K}.\vspace*{\rrrr}$$
By the Approximation Theorem~\ref{thm:approximation}, there is an odd integer $k$ such that
$\PathRF{k}(G) \times \PathRF{k}(H) \stackrel{\text{Lem~\ref{lem:funcproduct}}}{\leftrightarrow} \PathRF{k}(G \times H) \to K$. 
By definition of multiplicativity of $K$, we have $\PathRF{k}(G) \to K$ or $\PathRF{k}(H) \to K$, hence by the other direction of the Approximation Theorem~\ref{thm:approximation},
$X \zeq \BBx{G} \to_{\ZZ_2} \BBx{K}$ or $Y \zeq \BBx{H} \to_{\ZZ_2} \BBx{K}$.
\end{proof}

The proof of Theorem~\ref{thm:characterization}, restated next, is similarly straightforward.
\begin{theorem}
	Let $Z$ be a $\ZZ_2$-space and let $K$ be a graph such that $Z \zeq \BBx{K}$.
	Then $Z$ is multiplicative if and only if for all graphs $G,H$ the following holds: if $G \times H \to K$, then for some odd $k$, $\PathRF{k}(G) \to K$ or $\PathRF{k}(H) \to K$.
\end{theorem}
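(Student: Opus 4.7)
The plan is to prove both directions by essentially the same translation: go from graphs to box complexes via the Approximation Theorem, from spaces to graphs via the Universality Theorem, and use Lemma~\ref{lem:boxproduct} and Lemma~\ref{lem:funcproduct} to interchange products with $\Bx{}$ and with $\PathRF{k}$. The proof of Theorem~\ref{thm:multImplies} already demonstrates this template, so both directions here are minor adaptations.

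For the forward direction, I would assume $Z$ is multiplicative and suppose $G \times H \to K$. This gives $\BBx{G} \times \BBx{H} \zeq \BBx{G\times H} \to_{\ZZ_2} \BBx{K} \zeq Z$, where the equivalence on the left uses Lemma~\ref{lem:boxproduct} and the homomorphism $G\times H\to K$ induces the middle $\ZZ_2$-map. By multiplicativity of $Z$, one of $\BBx{G},\BBx{H}$ admits a $\ZZ_2$-map to $Z \zeq \BBx{K}$, and then the Approximation Theorem~\ref{thm:approximation} converts this $\ZZ_2$-map to a homomorphism $\PathRF{k}(G) \to K$ or $\PathRF{k}(H) \to K$ for some odd $k$.

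For the backward direction, suppose the combinatorial condition holds and let $X,Y$ be $\ZZ_2$-spaces with $X \times Y \to_{\ZZ_2} Z$. By the Universality Theorem~\ref{thm:universality}, pick graphs $G,H$ with $X \zeq \BBx{G}$ and $Y \zeq \BBx{H}$. Then
\[
 \BBx{G \times H} \zeq \BBx{G} \times \BBx{H} \zeq X \times Y \to_{\ZZ_2} Z \zeq \BBx{K},
\]
so by the Approximation Theorem there is an odd $k'$ with $\PathRF{k'}(G \times H) \to K$, and Lemma~\ref{lem:funcproduct}.\ref{l:rightprod} gives $\PathRF{k'}(G) \times \PathRF{k'}(H) \to K$. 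Now apply the combinatorial hypothesis to the pair $(\PathRF{k'}(G), \PathRF{k'}(H))$: there is an odd $k$ such that $\PathRF{k}(\PathRF{k'}(G)) \to K$ or $\PathRF{k}(\PathRF{k'}(H)) \to K$. By Lemma~\ref{lem:squarefree}.\ref{l:comp}, iterating $\PathRF{}$ is (up to homomorphic equivalence) just $\PathRF{kk'}$, so one of $\PathRF{kk'}(G) \to K$, $\PathRF{kk'}(H) \to K$ holds. Feeding this back into the Approximation Theorem yields $\BBx{G} \to_{\ZZ_2} \BBx{K} \zeq Z$ or $\BBx{H} \to_{\ZZ_2} \BBx{K} \zeq Z$, and composing with the $\ZZ_2$-homotopy equivalences $X \zeq \BBx{G}$, $Y \zeq \BBx{H}$ finishes the proof.

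There is no real obstacle beyond keeping track of the chain of reductions; the only subtlety is the double application of $\PathRF{}$ in the backward direction, which is why the closure property in Lemma~\ref{lem:squarefree}.\ref{l:comp} (that iterations of $\PathRF{k'}$ are absorbed into a single $\PathRF{kk'}$) is essential. Everything else is a routine chase through the theorems stated in the preliminaries and the introduction.
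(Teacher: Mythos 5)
Your proposal is correct and follows essentially the same route as the paper's proof: both directions translate between graphs and spaces via the Universality and Approximation Theorems, commute products past $\Bx{}$ and $\PathRF{k}$ using Lemma~\ref{lem:boxproduct} and Lemma~\ref{lem:funcproduct}, and absorb the double application of $\PathRF{}$ in the backward direction via Lemma~\ref{lem:squarefree}.\ref{l:comp}. No gaps.
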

\begin{proof}
For one direction, let $K$ be a graph such that $\BBx{K}$ is a multiplicative $\ZZ_2$-space.
Let $G,H$ be graphs and suppose that $G \times H \to K$.
Then\vspace*{\rrrr}
$$\BBx{G} \times \BBx{H} \stackrel{\text{Lem~\ref{lem:boxproduct}}}{\zeq} \BBx{G \times H} \to_{\ZZ_2} \BBx{K}\vspace*{\rrrr}$$
By the multiplicativity of $\BBx{K}$, we have $\BBx{G} \to_{\ZZ_2} \BBx{K}$, which
by the Approximation Theorem~\ref{thm:approximation} implies that $\PathRF{k}(G) \to K$ for some odd integer $k$ (or the same for $H$).

\pagebreak

For the other direction, suppose $K$ has the property that for all graphs $G,H$, $G\times H \to K$ implies  $\PathRF{k}(G) \to K$ or $\PathRF{k}(H) \to K$ for some odd $k$. 
To show that $\BBx{K}$ is a multiplicative space,
let $X,Y$ be any $\ZZ_2$-spaces such that $X \times Y \to_{\ZZ_2} \BBx{K}$.
By the Universality Theorem~\ref{thm:universality}, there are graphs $G,H$ such that $\BBx{G} \zeq X$ and $\BBx{H} \zeq Y$.
Then\vspace*{\rrrr}
$$\BBx{G \times H}  \stackrel{\text{Lem~\ref{lem:boxproduct}}}{\zeq} \BBx{G} \times \BBx{H} \to_{\ZZ_2} \BBx{K}\vspace*{\rrrr}$$
Hence by the Approximation Theorem~\ref{thm:approximation}, there is an odd integer $k$ such that
\vspace*{\rrrr}
$$\PathRF{k}(G) \times \PathRF{k}(H) \stackrel{\text{Lem~\ref{lem:funcproduct}}}{\leftrightarrow} \PathRF{k}(G \times H) \to K\vspace*{\rrrr}$$
By the property of $K$, there is an odd integer $k'$ such that $\PathRF{k'}(\PathRF{k}(G)) \to K$ (or the same for $H$).
By the Approximation Theorem~\ref{thm:approximation} and the Equivalence Theorem~\ref{thm:equiv}, this implies 
$X \zeq \BBx{G} \zeq \BBx{\PathRF{k\cdot k'}(G)} \to_{\ZZ_2} \BBx{K} \zeq Z$ (or $Y \to_{\ZZ_2} Z$).
\end{proof}

\section{Proof of the Equivalence Theorem~\ref{thm:equiv}}\label{sec:equiv}
Throughout this section we assume that $G$ is a graph without loops (hence so is $\PathRF{2k+1}(G)$).
The goal of this section is to show Theorem~\ref{thm:equiv}, in particular that $\BBx{G}$ and $\BBx{\PathRF{2k+1}(G)}$ are $\ZZ_2$-homotopy equivalent, for all $k$.
Following ideas of Csorba~\cite{Csorba08}, we use basics of Discrete Morse Theory, a framework introduced by Forman~\cite{Forman98} which allows to show homotopy equivalence in a very combinatorial way.
We refer to~\cite{Forman02} for an introduction and~\cite{Kozlov2008morse} for an in depth coverage.

Let us introduce the required notions.
We\marginpar{$\sigma \dmatch \tau$} denote the symmetric difference of two simplices as $\sigma \dmatch \tau$.
We will construct homotopy equivalences by composing a sequence of small steps.
If $K$ is a simplicial\marginpar{\footnotesize{collapse}} complex with a simplex $\tau$ such that there is a unique simplex $\sigma\neq \tau$ in $K$ containing $\tau$, then it is not hard to show that $|K \setminus \{\tau,\sigma\}|$ is homotopy equivalent to $|K|$ (and the inclusion map gives one side of a homotopy equivalence); this is called an \emph{elementary collapse}.
If $K'$ can be obtained from $K$ by a sequence of elementary collapses, we say that $K$ \emph{collapses} to $K'$.
If $K'$ can\marginpar{\footnotesize{simple\\\hspace*{0.2em}homot. eq.}} be obtained from $K$ by a sequence of elementary collapses and expansions (operations inverse to elementary collapses), we say that $K'$ is \emph{simple homotopy equivalent} to $K$ (Whitehead showed that this notion is slightly stronger than just homotopy equivalence, see~\cite{Cohen73book}).
The definitions are naturally extended to free $\ZZ_2$-simplicial complexes (where elementary collapses have to be performed in pair: $\tau,\sigma$ are removed together with their $\ZZ_2$-image $-\tau,-\sigma$).

A sequence of elementary collapses can be described more concisely using matchings.
For\marginpar{\vspace{1pt}\hspace*{-3pt}\footnotesize{matching}} a simplicial complex $K$ and a subcomplex $K'$, a \emph{matching} is a bijective function $\mu$ on the set of simplices $K\setminus K'$ such that $\mu \circ \mu = \mathrm{id}$, for each $\sigma\in K\setminus K'$, $\mu(\sigma)$ contains or is contained in $\sigma$, and $|\sigma \dmatch \mu(\sigma)| = 1$.
It is a \emph{$\ZZ_2$-matching} if $\mu(-\sigma)=-\mu(\sigma)$ (the $\ZZ_2$-action applied element-wise).
Since all of the simplices of $K \setminus K'$ are matched into pairs, we can try to order them into a sequence of elementary collapses. The necessary and sufficient condition turns out to be the following.
A matching\marginpar{\footnotesize{acyclic}} is \emph{acyclic} if there is no sequence of containments of the following form (for $n\geq 2$ pair-wise different $\sigma_i$ in $K \setminus K'$):
\begingroup
\newcommand{\rsub}[1]{\rotatebox[origin=c]{#1}{$\subseteq$}}
\newcommand{\rsup}[1]{\rotatebox[origin=c]{#1}{$\supseteq$}}
\setlength{\arraycolsep}{1pt}%
\renewcommand{\arraystretch}{-2}%
\setcounter{MaxMatrixCols}{30}
\vspace*{-10pt}
\begin{equation}\label{eq:cycle}\tag{$\ast$}
\begin{matrix}
&&\mu(\sigma_1)&&&&\mu(\sigma_2)&&&&\mu(\sigma_n)&&\\
&\rsub{45}&&\rsup{-45}&&\rsub{45}&&\rsup{-45}&&\rsub{45}&&\rsup{-45}&\\
\sigma_1&&&&\sigma_2&&&&\ \cdots\ &&&&\sigma_1\\
\end{matrix}
\vspace*{-5pt}
\end{equation}
With those definitions, we can state the basic theorem of Discrete Morse Theory (we note this is only the simplest version of the statement, but we will not need anything more; the $\ZZ_2$ variant was stated in \cite[Remark 7]{Csorba08}, we give a proof for completeness):

\begin{theorem}[]\label{thm:morse}
	Let $K$ be a free $\ZZ_2$-simplicial complex and $K'$ a $\ZZ_2$-subcomplex. If there is an acyclic $\ZZ_2$-matching on the set of simplices $K \setminus K'$, then $K$ $\ZZ_2$-collapses to $K'$.
	In particular, the inclusion map $K' \hookrightarrow K$ is a $\ZZ_2$-homotopy equivalence.
\end{theorem}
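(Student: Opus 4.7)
The plan is to proceed by induction on the number of simplices in $K \setminus K'$; the base case $K = K'$ gives the identity map. For the inductive step I will locate a matched pair $(\sigma, \mu(\sigma))$ with $\sigma \subsetneq \mu(\sigma)$ that forms an elementary collapse pair in $K$, perform this collapse simultaneously with its $\ZZ_2$-image $(-\sigma, -\mu(\sigma))$, argue that the restricted matching is still an acyclic $\ZZ_2$-matching on the smaller complex, and invoke the induction hypothesis.

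To locate a collapsible pair, first pick a $\tau \in K \setminus K'$ of maximum dimension. Any proper coface of $\tau$ in $K$ would also lie in $K \setminus K'$ (since $K'$ is a subcomplex) and have strictly larger dimension, contradicting the choice; so $\tau$ is maximal in $K$. Since $\tau$ is matched and $|\tau \dmatch \mu(\tau)| = 1$, maximality forces $\mu(\tau) \subsetneq \tau$. Let $\mathcal{T}$ be the set of all simplices of this top dimension in $K \setminus K'$; the same argument shows each $\tau' \in \mathcal{T}$ is matched with $\mu(\tau') \subsetneq \tau'$ and that every coface of $\mu(\tau')$ in $K$ again lies in $\mathcal{T}$.

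The task reduces to finding $\tau^* \in \mathcal{T}$ whose partner $\mu(\tau^*)$ has no other coface. Consider the directed graph $H$ on vertex set $\mathcal{T}$ with an arc $\tau_1 \to \tau_2$ whenever $\tau_1 \neq \tau_2$ and $\mu(\tau_1) \subsetneq \tau_2$. A directed cycle $\tau_1 \to \cdots \to \tau_n \to \tau_1$ in $H$ translates, via $\sigma_i := \mu(\tau_i)$ and a reversal of the indexing, into a sequence of containments of the form~\eqref{eq:cycle}. Hence $H$ is acyclic and admits a sink $\tau^*$. By construction $\mu(\tau^*)$ is contained in no other element of $\mathcal{T}$, and since all of its cofaces in $K$ lie in $\mathcal{T}$, the pair $(\mu(\tau^*), \tau^*)$ is an elementary collapse pair in $K$.

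It remains to handle $\ZZ_2$-symmetry and close the induction. By the $\ZZ_2$-matching condition, $(-\mu(\tau^*), -\tau^*)$ is another matched pair, and freeness of the action ensures that the four simplices $\mu(\tau^*), \tau^*, -\mu(\tau^*), -\tau^*$ are pairwise distinct; the second pair is an elementary collapse pair by $\ZZ_2$-symmetry. Removing all four yields a free $\ZZ_2$-subcomplex $\widetilde{K} \supseteq K'$ that is $\ZZ_2$-homotopy equivalent to $K$ via inclusion. Restricting $\mu$ to $\widetilde{K} \setminus K'$ still gives a $\ZZ_2$-matching, and it is still acyclic because any offending sequence in the restriction is \emph{a fortiori} an offending sequence for $\mu$ in $K \setminus K'$. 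The induction hypothesis applied to $\widetilde{K}$ then finishes the argument. I expect the step requiring the most care to be the acyclicity of $H$, which hinges on the correct translation of directed cycles in $H$ into cycles of the form~\eqref{eq:cycle}.
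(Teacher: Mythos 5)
Your proposal is correct and follows essentially the same route as the paper: induction on $|K\setminus K'|$, locating a top-dimensional matched pair whose smaller member is a free face, with acyclicity of the matching guaranteeing such a pair exists (the paper walks the containment sequence until it must terminate, you phrase the same fact as a finite DAG having a sink), then removing the pair together with its $\ZZ_2$-image. The only cosmetic difference is that you make explicit the restriction to the top-dimensional stratum $\mathcal{T}$ and the index reversal matching cycles of $H$ to sequences of the form~\eqref{eq:cycle}, both of which the paper's proof leaves implicit.
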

\begin{proof}
	Note that acyclicity, i.e. the lack of a sequence of the form \eqref{eq:cycle} is equivalent to the lack of a sequence  of following form, obtained by reversal $\sigma'_i := \mu(\sigma_{n-i+1})$:\vspace*{-5pt}
	\[
	\begin{matrix}
	\sigma'_1&&&&\sigma'_2&&&&\ \cdots\ &&&&\sigma'_1\vspace*{-5pt}\\
	&\rsup{-35}&&\rsub{35}&&\rsup{-35}&&\rsub{35}&&\rsup{-35}&&\rsub{35}&\vspace*{-5pt}\\
	&&\mu(\sigma'_1)&&&&\mu(\sigma'_2)&&&&\mu(\sigma'_n)&&\\
	\end{matrix}
	\vspace*{-3pt}\]
	We prove the statement by induction on $|K \setminus K'|$ (the number of matched simplices).
	Let $\sigma_1'$ be a matched simplex of maximum size.
	If $\mu(\sigma_1')$ is not strictly contained in any other simplex, then $K$ can be collapsed to $K \setminus \{\sigma_1',\mu(\sigma_1'),-\sigma_1',-\mu(\sigma_1')\}$.
	Otherwise $\mu(\sigma_1')$ is strictly contained in some $\sigma_2' \neq \sigma_1'$.
	Since $\mu(\sigma_1')$ is matched, so is $\sigma_2'$ (as unmatched simplices form a subcomplex $K'$).
	We consider $\mu(\sigma_2')$ and repeat the argument; since $\mu$ is acyclic, we eventually collapse something.
\end{proof}
\endgroup

\newcommand{\cut}{\phi}
We will show that $\Bx{\PathRF{2k+1}(G)}$ and $\Bx{\PathRF{2k-1}(G)}$ are (simple) homotopy equivalent by defining an intermediate complex that collapses to both.
For $\tup{A} \in V(\PathRF{2k+1}(G))$, let \marginpar{$\cut(\tup{A})$}
$$\cut(\tup{A}) := \left(A_0, \dots, A_{k-1}, \CN(A_{k-1})\right) \in  V(\PathRF{2k+1}(G)).$$
Define\marginpar{$\PathRF{2k+1}'(G)$} the graph $\PathRF{2k+1}'(G)$ by adding the following edges to $\PathRF{2k+1}(G)$:
for each existing edge $\{\tup{A},\tup{B}\}$, add new edges $\{\tup{A},\cut(\tup{B})\}$, $\{\cut(\tup{A}),\tup{B}\}$, and $\{\cut(\tup{A}),\cut(\tup{B})\}$.
Observe that $\cut(\tup{A})$ is adjacent to $\cut(\tup{B})$ in $\PathRF{2k+1}'(G)$ if and only if $(A_0,\dots,A_{k-1})$ and $(B_0,\dots,B_{k-1})$ are adjacent in $\PathRF{2k-1}(G)$. 
In particular the subgraph of $\PathRF{2k+1}'(G)$ induced on the vertices of $\im \cut \times\{\circ,\bullet\}$ is isomorphic to $\PathRF{2k-1}(G)$.
We show that it induces a $\ZZ_2$-homotopy equivalent subcomplex.

\begin{lemma}\label{lem:easyCollapse}
	$\Bx{\PathRF{2k+1}'(G)}$ $\ZZ_2$-collapses to the subcomplex induced by $\im \cut \times\{\circ,\bullet\}$ (isomorphic to $\Bx{\PathRF{2k-1}(G)}$).
\end{lemma}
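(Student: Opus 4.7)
The plan is to apply Theorem~\ref{thm:morse} by constructing an explicit acyclic $\ZZ_2$-matching on the simplices of $\Bx{\PathRF{2k+1}'(G)}$ lying outside the target subcomplex $K'$ induced by $\im\cut \times \{\circ,\bullet\}$. Fix any total order on $V(\PathRF{2k+1}(G))$. For a simplex $\sigma \notin K'$, at least one vertex $\tup{A}^\unk \in \sigma$ has $\tup{A} \notin \im\cut$; let $\tup{A}(\sigma)$ be the smallest such $\tup{A}$ and let $\unk(\sigma) \in \{\circ,\bullet\}$ be its unique side (uniqueness holds because $\PathRF{2k+1}(G)$ is loopless). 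Define
\[
\mu(\sigma) \ := \ \sigma \dmatch \{\cut(\tup{A}(\sigma))^{\unk(\sigma)}\}.
\]

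The first task is to check that $\mu$ is a well-defined $\ZZ_2$-matching. The core observation is the neighborhood inclusion $N_{\PathRF{2k+1}'(G)}(\tup{A}) \subseteq N_{\PathRF{2k+1}'(G)}(\cut(\tup{A}))$ for every $\tup{A}\notin\im\cut$, which is immediate from the recipe defining $\PathRF{2k+1}'(G)$: any edge incident to such an $\tup{A}$ ultimately arises from an original edge $\{\tup{A},\tup{B}_0\}$ of $\PathRF{2k+1}(G)$, and the construction simultaneously provides $\{\cut(\tup{A}),\tup{B}_0\}$ and $\{\cut(\tup{A}),\cut(\tup{B}_0)\}$. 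Hence $\cut(\tup{A}(\sigma))$ is joined to the opposite side of $\sigma$ and the common neighborhoods $\CN(\vvL{\cdot}),\CN(\vvR{\cdot})$ do not shrink, so $\mu(\sigma)$ is a valid simplex. Involutivity holds because $\cut(\tup{A}(\sigma))^{\unk(\sigma)}$ lies in $\im\cut \times \{\circ,\bullet\}$ and is therefore never itself selected as a minimum, so toggling it preserves $\tup{A}(\cdot)$ and $\unk(\cdot)$. Equivariance $\mu(-\sigma) = -\mu(\sigma)$ is automatic, since the $\ZZ_2$-action merely flips the two sides, giving $\tup{A}(-\sigma) = \tup{A}(\sigma)$ and $\unk(-\sigma) = -\unk(\sigma)$.

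The remaining and, in my view, most delicate task is acyclicity. Suppose for contradiction that a cycle $\sigma_1 \subseteq \mu(\sigma_1) \supseteq \sigma_2 \subseteq \dots \supseteq \sigma_1$ exists, where each $\sigma_i$ is the smaller face of its matched pair; set $\tup{A}_i := \tup{A}(\sigma_i)$ and $\unk_i := \unk(\sigma_i)$. Then $\sigma_{i+1} = \mu(\sigma_i) \setminus \{v\}$ for some $v \neq \cut(\tup{A}_i)^{\unk_i}$. If $v \neq \tup{A}_i^{\unk_i}$, then both $\tup{A}_i^{\unk_i}$ and $\cut(\tup{A}_i)^{\unk_i}$ belong to $\sigma_{i+1}$; moreover $\mu(\sigma_i) \setminus \sigma_i \subseteq \im\cut \times \{\circ,\bullet\}$, so the vertices of $\mu(\sigma_i)$ outside $\im\cut \times \{\circ,\bullet\}$ are exactly those of $\sigma_i$, whose minimum underlying label is $\tup{A}_i$ by definition. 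This forces $\tup{A}(\sigma_{i+1}) = \tup{A}_i$ and $\unk(\sigma_{i+1}) = \unk_i$, whence $\mu(\sigma_{i+1}) = \sigma_{i+1} \setminus \{\cut(\tup{A}_i)^{\unk_i}\}$ makes $\sigma_{i+1}$ the \emph{larger} face of its pair, contradicting the cycle. Hence $v = \tup{A}_i^{\unk_i}$, the vertex $\tup{A}_i$ is altogether removed from $\sigma_{i+1}$, and therefore $\tup{A}_{i+1} > \tup{A}_i$ strictly. Iterating around the cycle gives $\tup{A}_1 < \tup{A}_2 < \dots < \tup{A}_1$, impossible; Theorem~\ref{thm:morse} then delivers the claimed $\ZZ_2$-collapse.

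The single subtle point is the simplex-validity of $\mu(\sigma)$, which captures the whole geometric content of the lemma and rests entirely on the neighborhood inclusion $N(\tup{A}) \subseteq N(\cut(\tup{A}))$ in $\PathRF{2k+1}'(G)$; this is precisely the property that the auxiliary graph $\PathRF{2k+1}'(G)$ was designed to enforce.
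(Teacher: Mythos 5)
Your proof is correct and follows essentially the same route as the paper: the matching (toggling $\vU{\cut(\tup{A})}$ for the least vertex $\tup{A}$ of $\sigma$ outside $\im\cut$) is identical, and the key facts you verify---that $\PathRF{2k+1}'(G)$ was built precisely so that $\cut(\tup{A})$ inherits all neighbours of $\tup{A}$, and that up-steps only ever add vertices from $\im\cut\times\{\circ,\bullet\}$---are exactly the ones the paper uses. Your acyclicity argument (the minimal non-$\im\cut$ label strictly increases around any putative cycle unless the cycle stalls immediately) is a mild rephrasing of the paper's conservation argument rather than a different method.
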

\vspace*{-2pt}
\begin{proof}
The simplices not in the subcomplex are exactly those that contain $\vL{\tup{A}}$ (or $\vR{\tup{A}}$) for some vertex $\tup{A}$ from outside $\im \cut$.
We define a matching $\mu$ by matching every such simplex $\sigma$ with $\sigma \dmatch \{\vL{\cut(\tup{A})}\}$ (or $\sigma \dmatch \{\vR{\cut(\tup{A})}\}$), where $\tup{A}$ is chosen to be the smallest vertex in $(\vvL{\sigma} \cup \vvR{\sigma})\setminus \im \cut$, according to an arbitrary, fixed ordering on $V(\PathRF{2k+1}'(G))$.
Note that exactly one of $\vL{\tup{A}}$, $\vR{\tup{A}}$ is in $\sigma$, so this is a well-defined $\ZZ_2$-matching.
The fact that $\sigma \dmatch \{\vU{\cut(\tup{A})}\}$ is a simplex of $\Bx{\PathRF{2k+1}'(G)}$ follows from the definition of $\cut$ and $\PathRF{2k+1}'$; it is outside of $\im \phi\times\{\circ,\bullet\}$ because it still contains~$\vU{\tup{A}}$.

To show that the matching is acyclic, suppose $\sigma_1,\dots,\sigma_n$ ($n \geq 2$) forms a cycle as in~\eqref{eq:cycle}.
When going up the matching, from $\sigma_i$ to $\mu(\sigma_i)$, we always add a vertex in $\im \cut \times\{\circ,\bullet\}$.
Therefore, since the sequence forms a cycle, when going down from $\mu(\sigma_i)$ to $\sigma_{i+1}$ we can only remove vertices in $\im \cut \times\{\circ,\bullet\}$;
the set of vertices in $\sigma_i \setminus (\im \cut \times\{\circ,\bullet\})$ remains constant. 
But then the vertex $\vU{\cut(\tup{A})}$ added when going up the matching from $\sigma_1$ to $\mu(\sigma_1)$ is also the vertex in  $\sigma_2 \dmatch \mu(\sigma_2)$, by definition of the matching $\mu$.
This vertex is not removed when going down from $\mu(\sigma_1)$ to $\sigma_2$, since $\sigma_1 \neq \sigma_2$ ($n \geq 2$).
Hence $\sigma_2$ contains this vertex and $\mu(\sigma_2) = \sigma_2 \setminus \{\vU{\cut(\tup{A})}\}$,
contradicting the fact that the sequence should go up the matching ($\sigma_2 \subseteq \mu(\sigma_2)$).%
\vspace*{-2pt}%
\end{proof}

\noindent
To show the collapse to $\Bx{\PathRF{2k+1}(G)}$, let us describe minimal simplices that have to be collapsed.

\begin{lemma}\label{lem:preCollapse}
	A simplex $\sigma \in \Bx{\PathRF{2k+1}'(G)}$ is not in the subcomplex $\Bx{\PathRF{2k+1}(G)}$ if and only if\looseness=-1
	\begin{enumerate}[label=(\roman*)]
	\item $\sigma$ contains $\vL{\tup{A}},\vR{\tup{B}}$ such that $A_k \notjoin B_k$, or
	\item $\sigma$ contains $\vL{\tup{A}},\vL{\tup{C}}$  (or  $\vR{\tup{A}},\vR{\tup{C}}$) such that $A_k \notjoin C_{k-1}$.
	\end{enumerate}
\end{lemma}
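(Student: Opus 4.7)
The plan is to reduce both directions to a single structural observation: every edge $\{\tup{A},\tup{B}\}$ in $\PathRF{2k+1}'(G)$ already satisfies the containments $A_{i-1}\subseteq B_i$ and $B_{i-1}\subseteq A_i$ for all $i=1,\dots,k$ required of an edge in $\PathRF{2k+1}(G)$; what may fail is only the top-level join condition $A_k \join B_k$. I would establish this by case analysis on the four edge types of $\PathRF{2k+1}'(G)$: the original edges, and the three new types obtained by replacing one or both endpoints by $\cut(\cdot)$. In each new case, $\cut$ only changes the top coordinate to $\CN(X_{k-1})$, which contains every set that is joined with $X_{k-1}$; since the relevant sets on the opposing side are joined with $X_{k-1}$ (by validity of the underlying vertex in $\PathRF{2k+1}(G)$ combined with the containments from the witnessing edge), the top-level containments transfer.

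Given this claim, the backward implication is immediate. If (i) holds, with $\vL{\tup{A}},\vR{\tup{B}}\in\sigma$ and $A_k \notjoin B_k$, then $\tup{A}$ and $\tup{B}$ are not adjacent in $\PathRF{2k+1}(G)$, so the join condition of $\Bx{\PathRF{2k+1}(G)}$ fails. If (ii) holds, with say $\vL{\tup{A}},\vL{\tup{C}}\in\sigma$ and $A_k\notjoin C_{k-1}$, then any putative common neighbor $\tup{D}$ of $\tup{A}$ and $\tup{C}$ in $\PathRF{2k+1}(G)$ would force $C_{k-1}\subseteq D_k$ and $A_k\join D_k$, hence $A_k\join C_{k-1}$ --- a contradiction. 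Thus $\{\tup{A},\tup{C}\}$ has no common neighbor in $\PathRF{2k+1}(G)$, which prevents $\sigma$ from being a simplex there (via the join condition if $\vvR{\sigma}\neq\emptyset$, or via the common-neighborhood condition if $\vvR{\sigma}=\emptyset$).

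For the forward direction, assume neither (i) nor (ii) holds. By the structural claim the containments for every opposite-side pair are automatic, and the negation of (i) supplies the missing $A_k\join B_k$, so $\vvL{\sigma}\join\vvR{\sigma}$ in $\PathRF{2k+1}(G)$. The only remaining concern is the common-neighborhood condition when one side of $\sigma$ is empty, say $\vvR{\sigma}=\emptyset$. Starting from any $\tup{D}\in\CN(\vvL{\sigma})$ in $\PathRF{2k+1}'(G)$ (nonempty since $\sigma\in\Bx{\PathRF{2k+1}'(G)}$), I would construct a common neighbor in $\PathRF{2k+1}(G)$ by keeping the first $k$ coordinates of $\tup{D}$ and replacing the top one by $B:=\CN\bigl(\bigcup_{\tup{A}\in\vvL{\sigma}} A_k\bigr)\cap \CN(D_{k-1})$. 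The negation of (ii) yields $A_{k-1}\join A'_k$ for all $\tup{A},\tup{A}'\in\vvL{\sigma}$, hence $A_{k-1}\subseteq B$; combined with $D_{k-1}\subseteq A_k$ (from the structural claim applied to $\{\tup{D},\tup{A}\}$), this gives adjacency of the modified tuple to every $\tup{A}\in\vvL{\sigma}$ in $\PathRF{2k+1}(G)$.

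The main obstacle will be the case analysis in the structural claim: tracking when a coordinate comes from the original $\tup{A}_0$ versus $\cut(\tup{A}_0)$ and chasing the join relations through to recover the top-level containments requires some care. The remaining steps are mostly bookkeeping, with only minor subtlety in handling the empty-side case for the common neighbor construction.
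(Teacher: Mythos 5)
Your proposal is correct and follows essentially the same route as the paper: both rest on the observation that edges of $\PathRF{2k+1}'(G)$ automatically satisfy all the containment conditions of $\PathRF{2k+1}(G)$-edges so that only the top-level join $A_k\join B_k$ can fail, and both repair the common-neighborhood condition by replacing the top coordinate of a common neighbor taken in $\PathRF{2k+1}'(G)$ (the paper uses $\bigcup_{\tup{C}\in\vvL{\sigma}}C_{k-1}$ where you use a common neighborhood, but these are interchangeable). The only cosmetic differences are that you make the structural claim explicit and argue the second direction contrapositively.
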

\begin{proof}
Let us first show one direction.
If $\sigma \in \Bx{\PathRF{2k+1}'(G)}$ contains $\vL{\tup{A}},\vR{\tup{B}}$ such that $A_k \notjoin B_k$, then these are clearly not adjacent in $\PathRF{2k+1}(G)$, so $\sigma$ is not in the subcomplex.
If $\sigma$ contains $\vL{\tup{A}},\vL{\tup{C}}$  (or  $\vR{\tup{A}},\vR{\tup{C}}$) such that $A_k \notjoin C_{k-1}$, then suppose $\sigma$ is in the subcomplex.
By definition this implies that $\CN(\vvL{\sigma})$ (meaning the common neighborhood in $\PathRF{2k+1}(G)$) is non-empty, so let~$\tup{B}$~be~a~common neighbor of $\tup{A}$ and $\tup{C}$ in $\PathRF{2k+1}(G)$.
Then $\tup{A}$ is adjacent to $\tup{B}$ in $\PathRF{2k+1}(G)$, which implies $A_k \join B_k$, and $\tup{B}$ is adjacent to $\tup{C}$, which implies $B_k \supseteq C_{k-1}$, contradicting $A_k \notjoin C_{k-1}$. Hence $\sigma$ cannot be in the subcomplex.

For the other direction, consider a simplex $\sigma \in \Bx{\PathRF{2k+1}'(G)}$ that is not in the subcomplex.
That is, there are $\vL{\tup{A}},\vR{\tup{B}} \in \sigma$ such that $\tup{A}$ and $\tup{B}$ are not adjacent in $\PathRF{2k+1}(G)$, or it must be that $\CN(\vvL{\sigma})$ or $\CN(\vvR{\sigma})$ is empty.
In the former case, since $\tup{A}$ and $\tup{B}$ are adjacent in $\PathRF{2k+1}'(G)$, we conclude that $A_k \notjoin B_k$.
In the latter case,  say $\CN(\vvL{\sigma})$ is empty. That is, the vertices of $\vvL{\sigma}$ do not have a common neighbor in $\PathRF{2k+1}(G)$, although they do have some common neighbor $\tup{B}$~in~$\PathRF{2k+1}'(G)$.
Let $\tup{B}' := (B_0,\dots,B_{k-1}, B_k')$ where $B_k' := \bigcup_{\tup{C} \in \vvL{\sigma}} C_{k-1}$ (this is a vertex of $\Omega_{2k+1}(G)$, because $C_{k-1} \join B_{k-1}$ for $\tup{C} \in \vvL{\sigma}$, because $\tup{B}$ is a neighbor in $\PathRF{2k+1}'(G)$).
Since $\tup{B}'$ in particular is not a common neighbor of $\vvL{\sigma}$ in $\PathRF{2k+1}(G)$, it must be that $B_k' \notjoin A_k$ for some $\tup{A} \in \vvL{\sigma}$. By definition of $B_k'$, this means that $C_{k-1} \notjoin A_k$ for some $\tup{A},\tup{C} \in \vvL{\sigma}$.
\end{proof}

We can now show the necessary collapse, in phases corresponding to the points in Lemma~\ref{lem:preCollapse}.
The reader is warned that the proof is not very illuminating, it is just trying the simplest collapses that come to mind, carefully adapted into a few phases until all cases are covered, and checking that all the technical conditions are satisfied.

\begin{lemma}\label{lem:hardCollapse}
	$\Bx{\PathRF{2k+1}'(G)}$ $\ZZ_2$-collapses to the subcomplex $\Bx{\PathRF{2k+1}(G)}$.
\end{lemma}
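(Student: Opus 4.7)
The plan is to apply discrete Morse theory (Theorem~\ref{thm:morse}) by constructing a $\ZZ_2$-acyclic matching on the ``bad'' simplices $\Bx{\PathRF{2k+1}'(G)} \setminus \Bx{\PathRF{2k+1}(G)}$, which by Lemma~\ref{lem:preCollapse} are exactly those with a witness of type~(i) or~(ii). The only move I will use is the toggle $\sigma \mapsto \sigma \dmatch \{\vU{\cut(\tup{A})}\}$ for some $\vU{\tup{A}} \in \sigma$. This toggle stays inside $\Bx{\PathRF{2k+1}'(G)}$: by the extra edges added in the definition of $\PathRF{2k+1}'(G)$ and the idempotence $\cut \circ \cut = \cut$, the vertex $\cut(\tup{A})$ has a superset of the $\PathRF{2k+1}'$-neighbors of $\tup{A}$, so adding $\vU{\cut(\tup{A})}$ to any simplex containing $\vU{\tup{A}}$ produces another simplex.

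Fix a $\ZZ_2$-invariant total order on $V(\PathRF{2k+1}(G))$. I build the matching in two phases, mirroring the two conditions of Lemma~\ref{lem:preCollapse}. In Phase~1, I pair simplices whose badness is witnessed by condition~(i): for such $\sigma$, pick the minimum $\tup{A}^*$ (with canonical side $\unk$) for which $\vU{\tup{A}^*} \in \sigma$ participates in a mixed bad pair in a way such that toggling $\vU{\cut(\tup{A}^*)}$ preserves the witness, and set $\mu(\sigma) := \sigma \dmatch \{\vU{\cut(\tup{A}^*)}\}$. In Phase~2, the remaining bad simplices carry only condition~(ii) witnesses, and I apply the analogous rule to same-side bad pairs. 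Since the selection rule depends only on data unaffected by the toggle, $\mu$ is an involution; $\ZZ_2$-equivariance follows from the $\circ \leftrightarrow \bullet$ symmetry of the rule.

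Acyclicity is the easy step: along any candidate cycle~\eqref{eq:cycle}, the toggled vertex $\vU{\cut(\tup{A}^*)}$ is determined by $\tup{A}^*$, which stays constant throughout by the choice rule, so the same vertex $\vU{\cut(\tup{A}^*)}$ is being toggled at every step, forcing $\sigma_1 = \sigma_2$ exactly as in the proof of Lemma~\ref{lem:easyCollapse}.

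The main obstacle, as the hint warns, is verifying case by case that $\mu(\sigma)$ remains bad. Adding $\vU{\cut(\tup{A}^*)}$ preserves any existing witness trivially; the delicate case is removal, where one must ensure the witness does not hinge solely on the vertex $\vU{\cut(\tup{A}^*)}$ being removed. The phase structure is designed exactly around this: condition~(i) witnesses depend on the last coordinate $A_k$ (which $\cut$ enlarges to $\CN(A_{k-1})$), so in Phase~1 the choice of $\tup{A}^*$ must be sharpened to guarantee that the witness still appears without $\vU{\cut(\tup{A}^*)}$; condition~(ii) witnesses depend on the second-to-last coordinate $C_{k-1}$, which $\cut$ leaves untouched, so Phase~2 goes through uniformly because the witness involves coordinates of the remaining vertices that are unaffected. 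Fleshing out this routine but tedious case analysis, in the spirit of the hint, completes the construction and hence the $\ZZ_2$-collapse.
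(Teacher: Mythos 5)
Your overall strategy (a discrete Morse $\ZZ_2$-matching on the bad simplices identified by Lemma~\ref{lem:preCollapse}, organized into phases by witness type, with acyclicity deduced from a stable lexicographic choice) coincides with the paper's, but your choice of elementary move does not, and that is where the argument breaks. You only ever toggle $\vU{\cut(\tup{A})}$ for some $\vU{\tup{A}}\in\sigma$. Since $\cut$ is idempotent, for a bad simplex all of whose vertices already lie in $\im\cut$ every such toggle is a \emph{removal}; such a simplex can never be matched upward, and the involution cannot close (once $\vU{\cut(\tup{A})}=\vU{\tup{A}}$ is removed, your selection rule, which only considers vertices present in the simplex, cannot recover it). Such simplices exist: take $G=K_4$, $k=1$, $\tup{A}=(\{1\},\{2\})$, $\tup{B}=(\{2\},\{1\})$; then $\cut(\tup{A})=(\{1\},\{2,3,4\})$ and $\cut(\tup{B})=(\{2\},\{1,3,4\})$ are adjacent in $\PathRF{3}'(K_4)$ but not in $\PathRF{3}(K_4)$ (since $\{2,3,4\}\notjoin\{1,3,4\}$), so $\{\vL{\cut(\tup{A})},\vR{\cut(\tup{B})}\}$ is a minimal bad simplex whose faces are good and to which none of your moves applies. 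This is exactly why the paper does not match with $\cut(\tup{A})$: it matches $\sigma$ with $\sigma\dmatch\{\vL{\tup{A}^*}\}$ where $\tup{A}^*=(A_0,\dots,A_{k-1},\CN(S))$ and $S$ aggregates data from the \emph{whole} simplex (namely $S=\bigcup_{\tup{A}'\in\vvL{\sigma}}A'_{k-1}\cup\bigcup_{\tup{B}'\in\vvR{\sigma}\setminus\im\cut}B'_k$ in the same-side phases, and $A^*_k=\bigcup_{\tup{B}'\in\vvR{\sigma}}B'_{k-1}$ in the mixed phase). This vertex is in general not $\cut(\tup{A})$, and it is engineered so that it is invariant under the toggle (making $\mu$ an involution) and so that the bad witness provably survives in both $\sigma$ and $\mu(\sigma)$.

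Two further points. First, the verification you defer as ``routine but tedious'' is the actual content of the lemma: one must check that $\tup{A}^*$ is a well-defined vertex, that the toggled set is still a simplex of $\Bx{\PathRF{2k+1}'(G)}$, that it is still outside $\Bx{\PathRF{2k+1}(G)}$, that the selection is stable under the toggle, and acyclicity; none of these is automatic for your move even where it is available, and your acyclicity argument presupposes the stability you have not established. Second, your phase order (mixed pairs first, then same-side pairs) reverses the paper's; the paper treats same-side pairs first precisely so that in the mixed-pair phase one may assume $B'_k\join B''_{k-1}$ for all $\tup{B}',\tup{B}''\in\vvR{\sigma}$, which is needed there to show that $\tup{A}^*$ is a vertex and that the matched simplex remains bad.
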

\begin{proof}
We first collapse simplices containing some vertices $\vL{\tup{A}},\vL{\tup{C}}$ (or  $\vR{\tup{A}},\vR{\tup{C}}$) such that $A_k \notjoin C_{k-1}$.
Among those, we first collapse simplices where $\tup{A}$ can be chosen from outside $\im \cut$.

For any such simplex $\sigma$, choose $\vU{\tup{A}} \in \sigma \setminus \im \cut$, $\vU{\tup{C}} \in \sigma$ such that $A_k \notjoin C_{k-1}$ and $(\tup{A},\tup{C})$ is lexicographically minimum (according to some arbitrary fixed ordering of vertices of $\PathRF{2k+1}(G)$). Without loss of generality assume $\unk = \circ$ for this minimum pair.
Let\vspace*{-5pt}
 $$\tup{A}^* := (A_0,\dots,A_{k-1},\CN(S))  \quad\text{ where }\quad  S := \bigcup_{\tup{A}'\in\vvL{\sigma}} A'_{k-1} \quad \cup \quad \bigcup_{\tup{B}' \in \vvR{\sigma} \setminus \im\cut} B'_k\vspace*{-5pt}$$
We define a matching $\mu(\sigma) := \sigma \dmatch \{\vL{\tup{A}^*}\}$.
We need to check a series of technical conditions:
\begin{enumerate}[label=(\roman*)]
	\item\label{lI-1} the vertex $\tup{A}^*$ is well-defined;
	\item\label{lI-2} $\sigma \dmatch \{\vL{\tup{A}^*}\}$ is a simplex of $\Bx{\PathRF{2k+1}'(G)}$; equivalently, that $\tup{A}^*$ is adjacent to vertices in $\vvR{\sigma}$ and has a common neighbor together with all the vertices in $\vvL{\sigma}$;
	\item\label{lI-3} $\sigma \dmatch \{\vL{\tup{A}^*}\}$ still contains $\vL{\tup{A}}$ and $\vL{\tup{C}}$ (so it is not in the subcomplex we collapse to);
	\item\label{lI-4} $\mu(\sigma \dmatch \{\vL{\tup{A}^*}\}) = \sigma$ (so that $\mu$ is indeed a matching);
	\item\label{lI-5} $\mu$ is acyclic.
\end{enumerate}
For \ref{lI-1}, observe that $S$ contains $A_{k-1}$, which implies $A_{k-1} \join \CN(S)$, as required for a vertex.
\pagebreak[3]

For \ref{lI-2}, let us first show that $\tup{A}^*$ is adjacent to each vertex in $\vvR{\sigma}$. 
Let $\tup{B} \in \vvR{\sigma}$.
One condition for adjacency is that $B_{k-1} \subseteq \CN(S)$, or equivalently, that $B_{k-1} \join S$.
This holds, because $B_{k-1} \join A'_{k-1}$ for each $\tup{A}' \in \vvL{\sigma}$ (because $\vR{\tup{B}},\vL{\tup{A}'}$ are adjacent, as they are contained in $\sigma$).
Furthermore, $B_{k-1} \join B'_k$ for each $\tup{B}' \in \vvR{\sigma}\setminus \im \cut$, because $B_{k-1} \subseteq A_k$ and $A_k \join B'_k$ (because $\vR{\tup{B}}$ and $\vL{\tup{A}}$ are adjacent, while $\vL{\tup{A}}$ and $\vR{\tup{B}'}$ are adjacent and not in $\im \cut$).
Thus $B_{k-1} \join S$, that is, $B_{k-1} \subseteq \CN(S)$.

If $\tup{B} \in \im \cut$, then $\tup{B} = (B_0,\dots,B_{k-1},\CN(B_{k-1})) = \cut((B_0,\dots,B_{k-1},A_{k-1}))$. But $\tup{A}^*$ is adjacent to $(B_0,\dots,B_{k-1},A_{k-1})$, because $\tup{A}$ was adjacent to $\tup{B}$, $B_{k-1} \subseteq \CN(S)$ and $A_{k-1} \join \CN(S)$.
Hence  $\tup{A}^*$ is (by definition of $\PathRF{2k+1}'$) also adjacent to $\cut((B_0,\dots,B_{k-1},A_{k-1}))$, which is $\tup{B}$.

If on the other hand $\tup{B} \not\in \im \cut$, then $\tup{A}^*$ is again adjacent to it, because $\tup{A}$ was, $B_{k-1} \subseteq \CN(S)$ (as shown above), and $B_k \join \CN(S)$ (because $B_k \subseteq S$).

To conclude \ref{lI-2}, it remains to show that $\tup{A}^*$ has a common neighbor together with all vertices in $\vvL{\sigma}$.
If $\vvR{\sigma}$ is non-empty, then any vertex in it is such a common neighbor.
If however $\vvR{\sigma}$ is empty, then there must exists a vertex $\tup{B} \in \CN(\vvL{\sigma})$, so $\sigma \cup \{\vR{\tup{B}}\}$ is a simplex and the same analysis as above shows that  $\tup{A}^*$ is also adjacent to $\tup{B}$, proving that $\tup{B}$ is a common neighbor of~$\vvL{\sigma} \dmatch \{\tup{A}^*\}$.

For \ref{lI-3}, we need to show that $\vL{\tup{A}^*} \neq \vL{\tup{A}}$ and $\vL{\tup{A}^*} \neq \vL{\tup{C}}$.
The former follows from the fact that $S \supseteq C_{k-1}$, so $A^*_{k} = \CN(S) \join C_{k-1}$, while $A_k \notjoin C_{k-1}$, thus $A^*_k \neq A_k$.
The latter follows from the fact that $C_{k-1} \notjoin A_k$, but $A^*_{k-1} = A_{k-1} \join A_k$, so $C_{k-1} \neq A^*_{k-1}$.

For \ref{lI-4}, we need to show that the initial choice of a pair $\vU{\tup{A}},\vU{\tup{C}}$ for $\sigma \cup \{\vL{\tup{A}^*}\}$ will be the same as for $\sigma \setminus \{\vL{\tup{A}^*}\}$.
Recall that valid choices are pairs $\vU{\tup{A}},\vU{\tup{C}}$ of vertices in the simplex such that $\tup{A} \not\in \im\cut$ and $A_{k} \notjoin C_{k-1}$, and we select the lexicographically minimum valid choice.
Without loss of generality assume $\sigma \dmatch \{\vL{\tup{A}^*}\} = \sigma \cup \{\vL{\tup{A}^*}\}$ and
suppose to the contrary that the choice for $\sigma \cup \{\vL{\tup{A}^*}\}$ is $(\tup{A}^\dagger,\tup{C}^\dagger)$, different from the choice $(\tup{A},\tup{C})$ for $\sigma$.
Since $(\tup{A},\tup{C})$ is a valid choice for $\sigma \cup \{\vL{\tup{A}^*}\}$ as well, $(\tup{A}^\dagger,\tup{C}^\dagger)$ must be lexicographically smaller.
That is, either $\tup{A}^\dagger <  \tup{A}$, or $\tup{A}^\dagger = \tup{A}$ and $\tup{C}^\dagger < \tup{C}$.
Since $(\tup{A}^\dagger,\tup{C}^\dagger)$ was not a valid choice for $\sigma$, we have $\tup{A}^\dagger = \tup{A}^*$ or $\tup{C}^\dagger = \tup{A}^*$.
If $\tup{A}^\dagger = \tup{A}^*$, then $A^\dagger_{k} = A^*_k = \CN(S) \join C^\dagger_{k-1}$ (because $S \supseteq C^\dagger_{k-1})$, contradicting the fact that $(\tup{A}^\dagger, \tup{C}^\dagger)$ was a valid choice.
If on the other hand $\tup{C}^\dagger = \tup{A}^*$, then the validity of the choice implies $A^\dagger_k \notjoin C^\dagger_{k-1} = A^*_{k-1} = A_{k-1}$. In particular $A^\dagger \neq A$, so $A^\dag < A$. But then the pair $(A^\dag, A)$ would have been a valid, lexicographically smaller choice for $\sigma$, a contradiction.

Finally we show \ref{lI-5}, that is, the matching $\mu$ is acyclic.
Suppose to the contrary that $\sigma_1,\dots,\sigma_n$ ($n \geq 2$) forms a cycle as in~\eqref{eq:cycle}.
When going up the matching, from $\sigma_i$ to $\mu(\sigma_i)$, the initial choice of $\vU{\tup{A}},\vU{\tup{C}}$ remains the same, as shown in \ref{lI-4}.
When going down from $\mu(\sigma_i)$ to  $\sigma_{i+1}$ contained in it, the initial choice can only stay the same or increase lexicographically (since it is also available for $\mu(\sigma_i)$).
Hence the choice of $\vU{\tup{A}},\vU{\tup{C}}$ must in fact remain unchanged throughout the cycle, say it is $\vL{\tup{A}},\vL{\tup{C}}$ for all $\sigma_i$ and $\mu(\sigma_i)$.
Therefore, the vertices we add (and hence also those we remove) in the cycle are all of the form $\vL{(A_0,\dots,A_{k-1},X)}$ for some vertex subsets $X$.
This implies that the set $S$, as defined above, and hence also $\tup{A}^*$, is always the same when defining the simplex $\mu(\sigma_i)$ matched to $\sigma_i$.
But then $\tup{A}^*$ is always the vertex added (the vertex in $\mu(\sigma_i) \setminus \sigma_i$) and hence also the only vertex removed (the one in $\mu(\sigma_i) \setminus \sigma_{i+1}$), which implies $\sigma_1 = \sigma_2$, a contradiction.

\bigskip
We now collapse the remaining simplices $\sigma$ that contain some vertices $\vL{\tup{A}},\vL{\tup{C}}$ (or  $\vR{\tup{A}},\vR{\tup{C}}$) such that $A_k \notjoin C_{k-1}$.
By the previous collapsing phase, we know that $\tup{A} \in \im\cut$ and symmetrically:
\begin{equation}\label{lII}
\text{For any  $\vR{\tup{B}'},\vR{\tup{B}} \in \sigma $ with $B'_k \notjoin B_{k-1}$, we know that $\tup{B'} \in \im\cut$.}
\end{equation}
For any such simplex $\sigma$, choose $\vU{\tup{A}}, \vU{\tup{C}} \in \sigma$ such that $A_k \notjoin C_{k-1}$ and $(\tup{A},\tup{C})$ is lexicographically minimum (according to some arbitrary fixed ordering of vertices of $\PathRF{2k+1}(G)$).
Without loss of generality assume $\unk = \circ$ for this minimum pair.
Just as before, let $$\textstyle \tup{A}^* := (A_0,\dots,A_{k-1},\CN(S)) \quad\text{ where }\quad S := \bigcup_{\tup{A}'\in\vvL{\sigma}} A'_{k-1} \quad \cup \quad \bigcup_{\tup{B}' \in \vvR{\sigma} \setminus \im\cut} B'_k$$
We define a matching $\mu(\sigma) := \sigma \dmatch \{\vL{\tup{A}^*}\}$.
Similarly as before, we need to show \ref{lI-1}--\ref{lI-5}.
The proof of \ref{lI-1} is unchanged: $S$ contains $A_{k-1}$, which implies $A_{k-1} \join \CN(S)$, as required for a vertex.

For \ref{lI-2}, let us first show that $\tup{A}^*$ is adjacent to each vertex in $\vvR{\sigma}$. 
Let $\tup{B} \in \vvR{\sigma}$.
Observe that $B_{k-1} \join A'_{k-1}$ for all $\tup{A}' \in \vvL{\sigma}$ and by $\eqref{lII}$, $B_{k-1} \join B'_k$ for $\tup{B}' \in \vvR{\sigma} \setminus \im \cut$, hence $B_{k-1} \join S$, which means $B_{k-1} \subseteq \CN(S)$.
The remaining proof proceeds just as before (with two cases depending on  $\tup{B} \in \im\cut$ or  $\tup{B} \not\in \im\cut$), concluding \ref{lI-2}.
The proofs of \ref{lI-3}--\ref{lI-5} also proceed without change, since they never used the fact that $\tup{A} \not\in \im\cut$.

\bigskip
Finally, we collapse all simplices $\sigma$ containing $\vL{\tup{A}},\vR{\tup{B}}$ such that $A_k \notjoin B_k$.
Fortunately this is considerably simpler, since $\vvR{\sigma}$ is non-empty, and by the previous collapses, we know that 
\begin{equation}\label{lIII}
	\text{$B'_{k} \join B''_{k-1}$ for any $\tup{B}',\tup{B}'' \in \vvR{\sigma}$}
\end{equation}

For any such $\sigma$, choose a lexicographically minimum pair $(\vL{\tup{A}},\vR{\tup{B}})$ or $(\vR{\tup{A}},\vL{\tup{B}})$  in $\sigma$ such that $A_k \notjoin B_k$.
Without loss of generality assume it is $(\vL{\tup{A}},\vR{\tup{B}})$.
Let $\tup{A}^* := (A_0,\dots,A_{k-1},\bigcup_{\tup{B}' \in \vvR{\sigma}} B'_{k-1})$.
We define a matching $\mu(\sigma) := \sigma \dmatch \{\vL{\tup{A}^*}\}$ and check \ref{lI-1}--\ref{lI-5}.
It is now easy to check (using $\eqref{lIII}$) that \ref{lI-1} and \ref{lI-2} are satisfied.

For \ref{lI-3}, we need to show that $\vL{\tup{A}^*} \neq \vL{\tup{A}}$ (and trivially $\vL{\tup{A}^*} \neq \vR{\tup{B}}$).
This follows from the fact that $A_k \notjoin B_k$, but $A^*_k = \bigcup_{\tup{B}' \in \vvR{\sigma}} B'_{k-1} \join B_k$ (by $\eqref{lIII}$).

For \ref{lI-4}, we need to show that the initial choice of a pair $\vL{\tup{A}},\vR{\tup{B}}$ for $\sigma \cup \{\vL{\tup{A}^*}\}$ will be the same as for $\sigma \setminus \{\vL{\tup{A}^*}\}$.
This follows from the fact that $A^*_k \join B'_k$ for all $\tup{B}' \in \vvR{\sigma}$, so $\tup{A}^*$ does not contribute in any way to this choice (since it only considers vertices such that $A_k \notjoin B_k$).

Finally to show \ref{lI-5}, suppose to the contrary that $\sigma_1,\dots,\sigma_n$ ($n \geq 2$) forms a cycle as in~\eqref{eq:cycle}.
When going up the matching, from $\sigma_i$ to $\mu(\sigma_i)$, the initial choice of $\vL{\tup{A}},\vR{\tup{B}}$ remains the same, as shown in \ref{lI-4}.
When going down from $\mu(\sigma_i)$ to the simplex $\sigma_{i+1}$ contained in it, the initial choice can only stay the same or increase lexicographically (since it is also available for $\mu(\sigma_i)$).
Hence the choice of $\vL{\tup{A}},\vR{\tup{B}}$ must in fact remain unchanged throughout the cycle.
This implies that when going up the matching, we only add vertices to $\vvL{\sigma}$, so when going through the cycle we also only remove vertices from $\vvL{\sigma}$, and $\vvR{\sigma}$ is unchanged.
But then the vertex $\tup{A}^*$ added in the matching (in $\mu(\sigma_i) \setminus \sigma_i$) is always the same, so the only possible vertex in $\mu(\sigma_i) \setminus \sigma_{i+1}$ is also $\tup{A}^*$, implying that $\sigma_1=\sigma_2$, a contradiction.
\end{proof}

Theorem~\ref{thm:morse} with Lemma~\ref{lem:easyCollapse} and~\ref{lem:hardCollapse} already imply the $\ZZ_2$-simple homotopy equivalence of $\Bx{\PathRF{2k+1}(G)}$ and  $\Bx{\PathRF{2k-1}(G)}$.
To describe an explicit homotopy equivalence, Theorem~\ref{thm:morse} is insufficient, as it only guarantees a map in one direction of a collapse (the containment map) to be a homotopy equivalence. We hence replace the use of Lemma~\ref{lem:easyCollapse} with an explicit homotopy to conclude our theorem.

\begin{theorem}\label{thm:equivStep}
 For a graph $G$ without loops and $k \in \mathbb{N}$, $\Bx{\PathRF{2k+1}(G)}$ and $\Bx{\PathRF{2k-1}(G)}$ are 
$\ZZ_2$-simple homotopy equivalent.
 Moreover, the homomorphism $\PathRF{2k+1}(G) \to \PathRF{2k-1}(G)$ given by $(A_0,\dots,A_{k-1},A_k) \mapsto (A_0,\dots,A_{k-1})$ induces a $\ZZ_2$-homotopy equivalence.
\end{theorem}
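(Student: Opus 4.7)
The first assertion, that $\Bx{\PathRF{2k+1}(G)}$ and $\Bx{\PathRF{2k-1}(G)}$ are $\ZZ_2$-simple homotopy equivalent, is essentially immediate once the two collapse lemmas are available: by Theorem~\ref{thm:morse} applied to Lemmas~\ref{lem:easyCollapse} and~\ref{lem:hardCollapse}, the intermediate complex $\Bx{\PathRF{2k+1}'(G)}$ $\ZZ_2$-collapses both to its subcomplex $\Bx{\PathRF{2k+1}(G)}$ and to its subcomplex induced by $\im\cut\times\{\circ,\bullet\}$, which is $\ZZ_2$-isomorphic to $\Bx{\PathRF{2k-1}(G)}$. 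Composing the two sequences of elementary collapses (and their inverses) gives a $\ZZ_2$-simple homotopy equivalence. The real content is therefore the \emph{moreover} clause, which identifies such an equivalence with the map $|p|$ induced by the natural projection $p\colon(A_0,\dots,A_k)\mapsto(A_0,\dots,A_{k-1})$, and for this the mere existence of collapses is not enough.

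To pin the homotopy equivalence down, my plan is to construct an explicit $\ZZ_2$-deformation retraction $\rho\colon\Bx{\PathRF{2k+1}'(G)}\to\Bx{\PathRF{2k+1}'(G)}$ whose image lies in the $\cut$-subcomplex. Define $\rho$ on vertices by $\vU{\tup{A}}\mapsto\vU{\cut(\tup{A})}$ and extend simplicially, and accompany it by the straight-line homotopy
\[
h_t(\vU{\tup{A}}) := (1-t)\,\vU{\tup{A}} + t\,\vU{\cut(\tup{A})}
\]
extended affinely on each simplex. Since $\cut$ is idempotent and ignores the $\{\circ,\bullet\}$ label, $\rho$ fixes the subcomplex $\im\cut\times\{\circ,\bullet\}$ pointwise and commutes with the $\ZZ_2$-action. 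Under the isomorphism $\im\cut\times\{\circ,\bullet\}\cong\Bx{\PathRF{2k-1}(G)}$ that identifies $\cut(\tup A)$ with $(A_0,\dots,A_{k-1})$, the restriction of $\rho$ to $\Bx{\PathRF{2k+1}(G)}\subseteq\Bx{\PathRF{2k+1}'(G)}$ equals $|p|$ on vertices, hence on the whole complex. Combined with the fact that the inclusion $i_1\colon\Bx{\PathRF{2k+1}(G)}\hookrightarrow\Bx{\PathRF{2k+1}'(G)}$ is a $\ZZ_2$-homotopy equivalence by Theorem~\ref{thm:morse} plus Lemma~\ref{lem:hardCollapse}, and that $\rho$ is a $\ZZ_2$-deformation retraction onto a subcomplex $\ZZ_2$-isomorphic to $\Bx{\PathRF{2k-1}(G)}$, the equality $|p|=\rho\circ i_1$ forces $|p|$ to be a $\ZZ_2$-homotopy equivalence.

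The main obstacle, and the step I expect to occupy most of the work, is verifying that $h_t$ is actually well-defined as a map into $|\Bx{\PathRF{2k+1}'(G)}|$. Concretely this requires showing that for every simplex $\sigma$ of $\Bx{\PathRF{2k+1}'(G)}$, the set $\sigma\cup\{\vU{\cut(\tup{A})}\colon\vU{\tup{A}}\in\sigma\}$ is still a simplex. To this end I would prove a small lemma stating that whenever $\{\tup{A},\tup{B}\}$ is an edge of $\PathRF{2k+1}'(G)$, each of $\{\tup{A},\cut(\tup{B})\}$, $\{\cut(\tup{A}),\tup{B}\}$ and $\{\cut(\tup{A}),\cut(\tup{B})\}$ is too; this follows directly from the definition of the added edges together with the idempotence of $\cut$ (which kills the case in which $\tup{A}$ or $\tup{B}$ already lies in $\im\cut$). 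The resulting join condition $\vvL{\sigma\cup\cut(\sigma)}\join\vvR{\sigma\cup\cut(\sigma)}$ is then automatic, and the common-neighbourhood condition follows because every common neighbour of $\vvL{\sigma}$ in $\PathRF{2k+1}'(G)$ is, by the same lemma, a common neighbour of $\vvL{\sigma}\cup\cut(\vvL{\sigma})$. Once this is in place, $h_t$ lies inside $|\sigma\cup\cut(\sigma)|$ for every point of $|\sigma|$, so it is continuous on all of $|\Bx{\PathRF{2k+1}'(G)}|$ and the argument above closes.
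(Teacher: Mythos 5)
Your proposal is correct and follows essentially the same route as the paper: the simple homotopy equivalence comes from the two collapses via Theorem~\ref{thm:morse}, and the \emph{moreover} clause from the straight-line $\ZZ_2$-homotopy deforming the identity of $\Bx{\PathRF{2k+1}'(G)}$ to the simplicial map $\vU{\tup{A}}\mapsto\vU{\cut(\tup{A})}$, whose well-definedness rests on exactly the same check that $\sigma\cup\cut(\sigma)$ is a simplex. The only cosmetic difference is that the paper factors your retraction $\rho$ as $\iota\circ q$ with $q$ landing directly in $\Bx{\PathRF{2k-1}(G)}$, rather than treating it as a self-map followed by the identification of $\im\cut\times\{\circ,\bullet\}$ with $\Bx{\PathRF{2k-1}(G)}$.
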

\begin{proof}
	By Theorem~\ref{thm:morse} and Lemma~\ref{lem:hardCollapse} the containment map of $\Bx{\PathRF{2k+1}(G)}$ in $\Bx{\PathRF{2k+1}'(G)}$ is a $\ZZ_2$-homotopy equivalence.
	It remains to show that the following map $q$ from $\Bx{\PathRF{2k+1}'(G)}$ to $\Bx{\PathRF{2k-1}(G)}$
	is a $\ZZ_2$-homotopy equivalence:
	$$q\colon \vU{(A_0,\dots,A_{k-1},A_k)} \mapsto \vU{(A_0,\dots,A_{k-1})} \quad (\text{for }\unk \in \{\circ,\bullet\}) $$
	Consider the containment map $\iota: \vU{(A_0,\dots,A_{k-1})} \mapsto \vU{(A_0,\dots,A_{k-1},\CN(A_k))}$ of $\Bx{\PathRF{2k-1}(G)}$ in $\Bx{\PathRF{2k+1}'(G)}$.
	One composition, $q \circ \iota: \Bx{\PathRF{2k-1}(G)} \to \Bx{\PathRF{2k-1}(G)}$, is just the identity.
	
	The other composition is $\iota \circ q: \vU{(A_0,\dots,A_{k-1},A_k)} \mapsto \vU{(A_0,\dots,A_{k-1},\CN(A_{k-1}))}$.
	For $t\in [0,1]$, define $q_t(\tup{A}) = (1-t) \cdot \tup{A} + t \cdot \iota(q(\tup{A})) \in \BBx{\PathRF{2k+1}'(G)}$ and extend it linearly from vertices to all simplices.
	To show that this is well-defined, we need to show that for any simplex $\sigma \in \Bx{\PathRF{2k+1}'(G)}$, the set $\sigma \cup \{\iota(q(\vU{\tup{A}})) \mid \vU{\tup{A}} \in \sigma\}$ is again a simplex.
	This follows from the definition of $\PathRF{2k+1}'$ and the fact that $\iota \circ q$ coincides with the map $\cut$ used in this definition.
	Thus $q_t$ defines a $\ZZ_2$-homotopy from $q_1 = \iota \circ q$ to $q_0$, the identity map.
	Therefore $q$ and $\iota$ are $\ZZ_2$-homotopy equivalences, and hence $q$ composed with the containment map of $\Bx{\PathRF{2k+1}(G)}$ into $\Bx{\PathRF{2k+1}'(G)}$ is a $\ZZ_2$-homotopy equivalence.
\end{proof}

We conclude the proof of the Equivalence Theorem~\ref{thm:equiv} by applying Theorem~\ref{thm:equivStep} repeatedly.

\section{Proof of the Approximation Theorem~\ref{thm:approximation}}\label{sec:approx}
We first show that $\BBx{\PathRF{2k+1}(G)}$ refines $\BBx{G}$.
This is the part where using $\PathRF{2k+1}$ instead of iterations of $\PathRF{3}$ makes the proof considerably simpler, because when iteratively applying $\PathRF{3}$, one would have to deal with the fact that degrees in the graph, and hence dimensions of simplices, grow exponentially.

\pagebreak[3]

\begin{theorem}\label{thm:technical}
 Let $G$ be loop-free.
 There is a $\ZZ_2$-map $g \colon \BBx{\PathRF{2k+1}(G)} \to \BBx{G}$ such that:
 \begin{itemize}
	\item $g$ is $\ZZ_2$-homotopic to the map induced by the homomorphism $p_k : \PathRF{2k+1}(G) \to G$,
	\item in particular, $g$ is a $\ZZ_2$-homotopy equivalence (by Theorem~\ref{thm:equiv}),
	\item $g$ maps every simplex of $\Bx{\PathRF{2k+1}(G)}$ into a subset of $\BBx{G}$ of diameter less than $\frac{6D}{k}$, where $D$ is the maximum degree of $G$.
\end{itemize}
\end{theorem}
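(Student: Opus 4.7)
The plan is to construct $g$ as a piecewise-linear refinement of $|p_k|$. On each vertex $\vU{\tup{A}}$ of $\Bx{\PathRF{2k+1}(G)}$ with $\tup{A}=(A_0,\ldots,A_k)$ and $A_0=\{a_0\}$, I would take $g(\vU{\tup{A}})$ to be a specific convex combination $\sum_{i=0}^k \lambda_i\, b_i(\tup{A})$ in $|\Bx{G}|$, where $b_i(\tup{A})$ is the barycenter of $\vL{A_i}$ or $\vR{A_i}$ according to an alternating rule (starting from the side of $\vU{\tup{A}}$ at $i=0$), and the weights $\lambda_i$ are chosen so that $g(\vU{\tup{A}})$ stays within distance $O(1/k)$ of a common point of a specific simplex of $\Bx{G}$ determined by the simplex structure. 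The map is then extended affinely across each simplex.

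The central combinatorial input I would exploit is that for adjacent $\tup{A},\tup{B}$ in $\PathRF{2k+1}(G)$, $A_m \join B_m$ holds for \emph{every} $m\in\{0,\ldots,k\}$, not only at the defining $m=k$; indeed $A_m\subseteq B_{m+1}$ (from adjacency) together with $B_{m+1}\join B_m$ (tuple condition) yields $A_m\join B_m$. Applied across a simplex $\sigma$ of $\Bx{\PathRF{2k+1}(G)}$ with unions $U_m:=\bigcup_i A_m^{(i)}$ and $V_m:=\bigcup_j B_m^{(j)}$, this gives interleaving chains $U_0\subseteq V_1\subseteq U_2\subseteq\cdots$ and $V_0\subseteq U_1\subseteq V_2\subseteq\cdots$ together with $U_m\join V_m$ for every $m$. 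Combining these inclusions with the joins exhibits a single common simplex of $\Bx{G}$ containing all the $b_i(\tup{A})$ for $\vU{\tup{A}}\in\sigma$, so the affine extension is well-defined. $\ZZ_2$-equivariance is immediate from the symmetric treatment of $\circ$ and $\bullet$, and the $\ZZ_2$-homotopy from $g$ to $|p_k|$ is the straight-line interpolation within this common simplex that smoothly contracts the weights $\lambda_i$ for $i\geq 1$ to zero, leaving all mass on $b_0(\tup{A})=\vU{a_0}$.

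The hardest step, and the main obstacle, is the diameter bound $6D/k$. A naive estimate yields only $O(1)$: distinct $\vU{a_0^{(i)}}$ in a simplex can be at pairwise Euclidean distance $\sqrt{2}$ in $|\Bx{G}|$, and even the later barycenters $b_i(\tup{A}^{(i)})$ can differ by $O(1)$ across $i$, so the differences $g(\vU{\tup{A}^{(i)}})-g(\vU{\tup{A}^{(i')}})$ do not automatically shrink with $k$. The $1/k$ scaling must emerge from a genuine averaging effect: the weights $\lambda_i$ have to be distributed so that the leading-order contributions from varying $b_i$'s cancel pairwise, leaving only an $O(1/k)$ residual whose magnitude is controlled by the fact that maximal simplices $\tau\subseteq\Bx{G}$ have dimension at most $2D-1$ (so barycenter differences within $\tau$ are bounded in terms of $D$). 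The final constant $6$ comes from careful triangle-inequality bookkeeping across the $O(k)$ chain steps of the interleaving inclusions, where each step contributes a term bounded by a fixed multiple of $D/k^2$.
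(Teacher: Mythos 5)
Your construction coincides with the paper's: $g(\vU{\tup{A}})$ is the uniform average of the barycenters $\avg(\vL{A_0}),\avg(\vR{A_1}),\avg(\vL{A_2}),\dots$ with alternating sides, the interleaved chains $A_0\subseteq B_1\subseteq A_2\subseteq\cdots$ and $B_0\subseteq A_1\subseteq\cdots$ (together with $A_m\join B_m$ for all $m$) place all these points in a single simplex $\tau_\sigma$ of $\Bx{G}$ so that the affine extension is well defined, and the $\ZZ_2$-homotopy to $|p_k|$ is the straight-line contraction onto $\vU{a_0}$ inside that simplex. All of this part is correct.

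The gap is in the diameter bound, which you flag as the main obstacle but then only gesture at. The mechanism is not that ``each of the $O(k)$ chain steps contributes $O(D/k^2)$'': a single strict inclusion in the chain already shifts the average by $\Theta(1/k)$, so that accounting cannot be right, and if all $k$ inclusions were strict the two averages could genuinely be $\Omega(1)$ apart. The missing ingredient is a counting argument. For $\vL{\tup{A}},\vR{\tup{B}}$ in a common simplex one has $A_k\join B_k$, so (as $G$ is loop-free) every set in the two chains has size at most $D$; since each chain starts from a singleton and is monotone under inclusion, at most $D-1$ of its $k$ inclusions are strict. Hence among the $k+1$ matched pairs of barycenters $(\avg(\vL{A_0}),\avg(\vL{B_1})),(\avg(\vR{A_1}),\avg(\vR{B_0})),\dots$ at most $2D-2$ differ at all; with uniform weights $\tfrac{1}{k+1}$ each differing pair contributes at most $\tfrac{\sqrt2}{k+1}$ to the distance (any two points of a simplex in a geometric realization are at distance at most $\sqrt2$ regardless of its dimension---so your appeal to the dimension $2D-1$ of maximal simplices is not what controls the estimate). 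This gives $\lVert g(\vL{\tup{A}})-g(\vR{\tup{B}})\rVert\le\tfrac{(2D-2)\sqrt2}{k}$, and by the triangle inequality through an opposite-sign vertex at most $\tfrac{4\sqrt2(D-1)}{k}\le\tfrac{6D}{k}$ for same-sign pairs. Without this count the third bullet is not established. Relatedly, you should commit to the uniform weights $\lambda_i=\tfrac{1}{k+1}$ up front: choosing them ``so that the bound holds'' is circular, since the bound is exactly what has to be proved.
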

\vspace*{-3pt}
\begin{proof}\vspace*{-1pt}\setlength{\abovedisplayskip}{4pt}\setlength{\belowdisplayskip}{4pt}
	For a set of points $S = \{s_1,\dots,s_n\} \subseteq \BBx{G}$, define $\avg(S) := \frac{1}{n}(s_1+\dots+s_n)$.
	For $\vL{\tup{A}} \in V(\Bx{\PathRF{2k+1}(G)})$, let 
	$$g(\vL{\tup{A}}) := \avg(\{\avg(\vL{A_0}),\avg(\vR{A_1}),\avg(\vL{A_2}),\dots,\avg(\vU{A_k})\}$$
	(with $\circ$ and $\bullet$ alternating). 
	Since $\tup{A}$ is not an isolated vertex (by definition of the box complex), we have $A_i\subseteq A_{i+2}$ for $i= 0\dots k-2$ and $A_i \join A_{i+1}$ for $i=0 \dots k-1$.	
	Hence the set $\tau_{\vL{\tup{A}}} := \bigcup_{i\text{ even}} \vL{A_i} \cup \bigcup_{i\text{ odd}} \vR{A_i}$ is a simplex of $\Bx{G}$.
	As $g(\vL{\tup{A}})$ is a convex combination of vertices in this simplex, it defines a point in the geometric realization $\BBx{G}$.
	Define $g(\vR{\tup{A}}) \in \BBx{G}$ symmetrically and extend this map linearly to all of $\BBx{\PathRF{2k+1}(G)}$.
	This is well defined, since the set $\tau_\sigma := \bigcup_{\vU{\tup{A}} \in \sigma} \tau_{\vU{\tup{A}}}$ is easily checked to be a simplex of $\Bx{G}$.
	
	For $t\in [0,1]$ and $\vL{\tup{A}} \in V(\Bx{\PathRF{2k+1}(G)})$ with $A_0 = \{v\}$, define $g_t(\vL{\tup{A}}) := (1-t) \cdot g(\vL{\tup{A}}) + t \cdot \vL{v}$, similarly for $\vR{\tup{A}}$.
	This is again a convex combination of vertices in $\tau_{\vL{\tup{A}}}$ that extends linearly, hence $g_t$ defines a $\ZZ_2$-homotopy from $g_0 = g$ to $g_1 = p_k$.

	It remains to bound the diameter of images of simplices.
	Let $\sigma$ be a maximal simplex of $\Bx{\PathRF{2k+1}(G)}$.
	Let $\vL{\tup{A}},\vR{\tup{B}} \in \sigma$.
	Then $A_0 \subseteq B_1 \subseteq A_2 \subseteq \dots$ and $B_0 \subseteq A_1 \subseteq B_2 \subseteq \dots$ are two subset chains of length $k+1$.
	Moreover, since $A_k \join B_k$, all sets have size at most $|A_k|,|B_k| \leq D$, where $D$ is the maximum degree in $G$.
	Since $|A_0| = |B_0| = 1$, in both chains at most $D-1$ containments $\subseteq$ are strict, all the other ones are equalities.
	In particular, among the pairs $(A_0,B_1),(A_1,B_0),(A_2,B_3),(A_3,B_2),\dots$ at most $2D-2$ are pairs of different sets.
	Therefore $g(\vL{\tup{A}})$ and $g(\vR{\tup{B}})$ can be written as $\frac{2D - 2}{k}\,p + \frac{k - (2D-2)}{k}\,q$ and $\frac{2D - 2}{k}\,p' + \frac{k - (2D-2)}{k}\,q$ for some points $p,p',q$ in $\tau_\sigma$ (defined as averages of subsets of $\{\avg(\vL{A_0}),\avg(\vR{A_1}), \dots\}$ and $\{\avg(\vR{B_0}), \avg(\vL{B_1}), \dots\}$).
	Hence the distance between  $g(\vL{\tup{A}})$ and $g(\vR{\tup{B}})$ is the same as the distance between $\frac{2D - 2}{k}\,p$ and  $\frac{2D - 2}{k}\,p'$, which is at most $\frac{2D - 2}{k}\,\sqrt{2}$, since the distance between any two points $p,p'$ in a simplex (namely in $\tau_\sigma$) of a geometric realization is at most $\sqrt{2}$.
	The distance between $g(\vL{\tup{A}})$ and $g(\vL{\tup{A}'})$ for any $\vL{\tup{A}},\vL{\tup{A}'} \in \sigma$ is at most the sum of their distances to $g(\vR{\tup{B}})$, hence at most $\frac{4\sqrt{2}(D-1)}{k}$.
	Since the image of $\sigma$ is the convex hull of the images of its vertices, its diameter is also bounded by $\frac{4\sqrt{2}(D-1)}{k}$, which we bound by $\frac{6D}{k}$ for conciseness.
	\vspace*{-2pt}
\end{proof}

We are now ready to show the Approximation Theorem~\ref{thm:approximation}, restated next.
This closely follows the standard technique of simplicial approximation (see e.g. Theorem~2C.1. in~\cite{Hatcher}). The main difference is that we consider $\ZZ_2$-maps instead of just continuous maps, and that finding a $\ZZ_2$-map between box complexes that is simplicial (i.e., a linear extension of a map on vertices of the complex) is not enough to find a graph homomorphism (because a simplicial map can map two adjacent vertices into a single vertex).
To avoid these problems, we use a variant of the fact that the box complex is $\ZZ_2$-homotopy equivalent to another complex (called $\Hom(K_2,G)$)~\cite{Csorba08}, allowing us to avoid certain extremal points of the box complex.

\begin{theorem}
 There exists a $\ZZ_2$-map from $\BBx{G}$ to $\BBx{H}$ if and only if for some $k\in\NN$, $\PathRF{2k+1}(G)$ has a homomorphism to $H$.
\smallskip

 Moreover, if $G$ and $H$ have no loops, then for any $\ZZ_2$-map $f:\BBx{G} \to_{\ZZ_2} \BBx{H}$ there is an integer~$k$ and a homomorphism $\PathRF{2k+1}(G) \to H$ that induces a map $\ZZ_2$-homotopic to $f \circ |p_k|$.
\end{theorem}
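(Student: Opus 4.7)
The forward implication is immediate from Theorem~\ref{thm:equiv}: a homomorphism $\PathRF{2k+1}(G) \to H$ induces a $\ZZ_2$-map $\BBx{\PathRF{2k+1}(G)} \to_{\ZZ_2} \BBx{H}$ by functoriality, and precomposing with a $\ZZ_2$-homotopy inverse of $|p_k|$ supplied by Theorem~\ref{thm:equiv} yields the required $\ZZ_2$-map $\BBx{G} \to_{\ZZ_2} \BBx{H}$.

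For the non-trivial direction, I would first reduce to the case where $G$ and $H$ are loopless (the other cases are easy: a loop in $H$ makes every target map trivial, while if $G$ has a loop but $H$ does not, then $\BBx{G}$ has a $\ZZ_2$-fixed point whereas $\BBx{H}$ is free, so neither a $\ZZ_2$-map nor a homomorphism can exist). The plan is then the classical simplicial approximation technique, with $\PathRF{2k+1}(G)$ playing the role of a fine subdivision of $G$. Let $g: \BBx{\PathRF{2k+1}(G)} \to \BBx{G}$ be the $\ZZ_2$-homotopy equivalence of Theorem~\ref{thm:technical}, whose key feature is that each simplex of the source is mapped to a set of diameter $<6D/k$. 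By uniform continuity of $f$ on the compact space $\BBx{G}$ and the Lebesgue number lemma applied to the open-star cover of $\BBx{H}$, choosing $k$ large enough ensures that $h := f \circ g$ sends each simplex of $\BBx{\PathRF{2k+1}(G)}$ into the open star of some vertex of $\BBx{H}$. Equivariant simplicial approximation then produces a simplicial $\ZZ_2$-map $\phi: \BBx{\PathRF{2k+1}(G)} \to \BBx{H}$ that is $\ZZ_2$-homotopic to $h$.

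The real work lies in converting $\phi$ into a graph homomorphism. The simplicial map $\phi$ determines a function $\psi: V(\PathRF{2k+1}(G)) \to V(H)$ together with a sign $s(\tup{A}) \in \{\circ, \bullet\}$ via the rule that $\phi(\vL{\tup{A}}) = \vL{\psi(\tup{A})}$ if $s(\tup{A}) = \circ$ and $\vR{\psi(\tup{A})}$ otherwise; $\ZZ_2$-equivariance fixes $\phi(\vR{\tup{A}})$ as the antipode. For an edge $\{\tup{A}, \tup{B}\}$ in $\PathRF{2k+1}(G)$, the $1$-simplex $\{\vL{\tup{A}}, \vR{\tup{B}}\}$ is sent by $\phi$ to some simplex $\sigma$ of $\BBx{H}$; when $s(\tup{A}) = s(\tup{B})$, the images $\phi(\vL{\tup{A}})$ and $\phi(\vR{\tup{B}})$ lie on opposite sides of the bipartition of $\BBx{H}$ and the defining condition $\vvL{\sigma} \join \vvR{\sigma}$ forces $\psi(\tup{A})\psi(\tup{B}) \in E(H)$; but with mismatched signs one only recovers the strictly weaker assertion that $\psi(\tup{A})$ and $\psi(\tup{B})$ share a common neighbor in $H$.

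The main obstacle is therefore to force $s$ to be constant. To achieve this I would exploit the fact that $\BBx{H}$ is $\ZZ_2$-homotopy equivalent to a smaller variant of itself (in the spirit of the $\Hom(K_2, H)$ complex of Csorba~\cite{Csorba08}) in which the ``one-sided'' top simplices --- those fully contained in $\vL{V(H)}$ or $\vR{V(H)}$ --- have been collapsed away. A $\ZZ_2$-deformation retraction of $\BBx{H}$ off the interiors of these one-sided simplices replaces $f$, up to $\ZZ_2$-homotopy, by a map whose image lies in this balanced subcomplex, where every simplex has vertices on both sides of the bipartition. In such a target the equivariant simplicial approximation can be arranged with $s \equiv \circ$, because each candidate image simplex contains some $\vL{\cdot}$-vertex that can serve as $\phi(\vL{\tup{A}})$. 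Then $\psi$ is a genuine graph homomorphism $\PathRF{2k+1}(G) \to H$, and $|\psi| = \phi$ is $\ZZ_2$-homotopic to $h = f \circ g$, which by Theorem~\ref{thm:technical} is in turn $\ZZ_2$-homotopic to $f \circ |p_k|$, establishing the ``moreover'' clause.
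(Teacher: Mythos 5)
Your proposal follows essentially the same route as the paper: reduce to loopless $G,H$, use the refinement map $g$ of Theorem~\ref{thm:technical} together with the Lebesgue number of the open-star cover pulled back under $f$, and resolve the sign/orientation problem by first pushing $f$ into a ``balanced'' model of $\BBx{H}$ so that the approximation always selects $\circ$-vertices; the homotopy to $f\circ|p_k|$ is then the standard straight-line homotopy inside stars. Your identification of the sign issue (mismatched signs only yield a common neighbour, not adjacency) is exactly the difficulty the paper's construction is designed to avoid.

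One step is stated imprecisely, though the intended fix is the paper's. The ``balanced subcomplex'' you describe --- obtained by discarding the one-sided top simplices --- does not literally have the property that every point lies in some open star $\st\vL{v}$: the faces of a two-sided simplex include one-sided faces, and e.g.\ the vertex $\vR{v}$ belongs to no $\st\vL{w}$. The paper instead retracts $f$ onto the \emph{subspace} $X\subseteq\BBx{H}$ of points whose barycentric coordinates put total weight exactly $\tfrac12$ on each side (a $\ZZ_2$-deformation retract by~\cite{SimonyiTV09}, in the spirit of $\Hom(K_2,H)$); every point of $X$ has a strictly positive $\circ$-coordinate, so the sets $\st\vL{v}$ do cover the image of $f$, and the vertex map $h(\tup{A})$ defined by $f(g(\St\vL{\tup{A}}))\subseteq\st\vL{h(\tup{A})}$ is automatically ``all-$\circ$'' and is checked directly to be a graph homomorphism. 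With that substitution your argument goes through verbatim.
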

\begin{proof}
	The first statement is trivial if $G$ or $H$ have loops.
	Namely, if $H$ has a loop at $v$, then every graph admits a homomorphism to it and $\BBx{H}$ is not free---the $\ZZ_2$-action maps $\frac{1}{2}\vL{v}+\frac{1}{2}\vR{v}$ to itself---so every $\ZZ_2$-space trivially admits a $\ZZ_2$-map to it.
	If $G$ has a loop, then $\PathRF{2k+1}(G)$ also does, so it only admits homomorphisms to graphs with loops.
	Similarly, $\BBx{G}$ is not free, so it admits $\ZZ_2$-maps only to non-free $\ZZ_2$-spaces.
	We hence assume that $G$ and $H$ have no loops.

	For one direction, suppose $\PathRF{2k+1}(G)$ has a homomorphism to $H$, for some $k\in\NN$.
	This induces a $\ZZ_2$-map from $\BBx{\PathRF{2k+1}(G)}$ to $\BBx{H}$.
	By Theorem~\ref{thm:equiv}, there is a $\ZZ_2$-map from $\BBx{G}$ to $\BBx{\PathRF{2k+1}(G)}$.
	Composition then gives a $\ZZ_2$-map from $\BBx{G}$ to $\BBx{H}$.
	\medskip

	For the other direction, let $f:\BBx{G} \to_{\ZZ_2} \BBx{H}$ be a $\ZZ_2$-map.
	Let $X \subseteq \BBx{H}$ be the set of points $x \in \BBx{H}$ that, when written as a convex combination $x=\sum_{v \in V(H)} \lambda_v \vL{v} + \sum_{v \in V(H)} \mu_v \vR{v}$, satisfy $\sum_v \lambda_v = \sum_v \mu_v = \frac{1}{2}$.
	Simonyi et al.~\cite{SimonyiTV09} observed that the equivalences of various versions of the box complex imply that $\BBx{H}$ is $\ZZ_2$-homotopy equivalent to the subspace $X$.
	Therefore, up to $\ZZ_2$-homotopy, we can assume that the image of $f$ is contained in~$X$.\looseness=-1

	Define the \emph{star} of a simplex $\sigma$ in a simplicial complex $K$ to be the subcomplex made of all simplices containing $\sigma$, and all their subsets, that is: $\{\tau \mid \tau \cup \sigma \in K\}$.
	Define the \emph{closed star} $\St \sigma \subseteq \left|K\right|$ as the geometric realization of the star of $\sigma$ and the \emph{open star} $\st \sigma \subseteq \left|K\right|$ as the sum of interiors of geometric realizations of simplices in the star of $\sigma$.
	Thus $\St \sigma$ is the closure of the open set $\st \sigma$.
	For a vertex $v$ of the complex, we write $\St v$ for short instead of $\St \{v\}$.

	Observe that the sets $\st \vL{v}$ for $v \in V(H)$ cover (a superset of) $X$ in $\BBx{H}$ (to cover all of $\BBx{H}$ we would need $\st \vR{v}$ as well).
	Consider the family of sets $\vL{\Cc} := \{f^{-1}(\st \vL{v}) \mid v \in V(H)\}$.
	This is a family of open sets covering $\BBx{G}$ (because $f$ is continuous into $X$), which is a compact space (as a closed and bounded subset of $\RR^n$).
	Therefore, we can let $\varepsilon > 0$ be the Lebesgue number of $\vL{\Cc}$, that is, a number such that any set of points in $\BBx{G}$ of diameter less than $\varepsilon$ is contained in some set of $\vL{\Cc}$.
	
	Let $D$ be the maximum degree of $G$ and let $k := 12 D \cdot \frac{1}{\varepsilon}$.
	Let $g$ be the $\ZZ_2$-map from $\BBx{\PathRF{2k+1}(G)}$ to $\BBx{G}$ given by Theorem~\ref{thm:technical}.
	For every simplex $\sigma$ of $\BBx{\PathRF{2k+1}(G)}$, $g(\sigma)$ has diameter less than $\frac{6D}{k}$ in $\BBx{G}$.
	Thus, for every vertex $\tup{A}$ of $\PathRF{2k+1}(G)$, $g(\St \vL{\tup{A}})$ has diameter less than 
	$\frac{12 D}{k} = \varepsilon$ in $\BBx{G}$,
	hence there is a vertex $h(\tup{A}) \in V(H)$ such that $g(\St \vL{\tup{A}}) \subseteq f^{-1}(\st\vL{h(\tup{A})})$,
	that is, $f(g(\St \vL{\tup{A}})) \subseteq \st\vL{h(\tup{A})}$.
	We claim that $h: V(\PathRF{2k+1}(G)) \to V(H)$ is a graph homomorphism.

	\medskip
	Indeed, let $\tup{A}, \tup{B}$ be two adjacent vertices of $\PathRF{2k+1}(G)$.
	Then $f(g(\St \vL{\tup{A}})) \subseteq \st\vL{h(\tup{A})}$ and 
		$f(g(\St \vL{\tup{B}})) \subseteq \st\vL{h(\tup{B})}$, or equivalently (since $f,g$ are $\ZZ_2$-maps), $f(g(\St \vR{\tup{B}})) \subseteq \st\vR{h(\tup{B})}$.
	Since $\vL{\tup{A}}$ is contained in both $\St\vL{\tup{A}}$ and $\St\vR{\tup{B}}$,
	$f(g(\vL{\tup{A}}))$ is contained in both $\st\vL{h(\tup{A})}$ and $\st\vR{h(\tup{B})}$.
	Hence $\st\vL{h(\tup{A})} \cap \st\vR{h(\tup{B})} \neq \emptyset$, which implies that $h(\tup{A})$ and $h(\tup{B})$ must be adjacent graph vertices.

	Finally we show that the $\ZZ_2$-map induced by $h$ is $\ZZ_2$-homotopic to $f \circ g$, which in turn is $\ZZ_2$-homotopic to $f \circ |p_k|$, as guaranteed by Theorem~\ref{thm:technical}.
	Indeed, let $x$ be a point in a simplex $|\sigma| = |\{\vL{\tup{A}^{1}},\dots,\vL{\tup{A}^{n}},\vR{\tup{B}^{1}},\dots,\vR{\tup{B}^{m}}\}|$ of $|\Bx{\PathRF{2k+1}(G)}|$.
	Then $h(x)$ is a point in the simplex $|h(\sigma)|$, meaning $|\{\vL{h(\tup{A}^1)}, \dots,  \vR{h(\tup{B}^1)}, \dots\}|$ (since the $\ZZ_2$-map induced by the graph homomorphism is defined as a linear extension of the map on vertices).
	On the other hand $x$ is in $\St \vL{\tup{A}^{1}} \cap \dots \cap \St  \vR{\tup{B}^{1}} \cap \dots$, hence $f(g(x))$ is in $\st \vL{h(\tup{A}^1)} \cap \dots \st \vR{h(\tup{B}^1)} \cap \dots$, which is equal to $\st \{\vL{h(\tup{A}^1)}, \dots,  \vR{h(\tup{B}^1)}, \dots\} = \st h(\sigma)$ (see Lemma~2C.2. in~\cite{Hatcher}).
	Therefore $h(x)$ and $f(g(x))$ are both contained in a common simplex (a simplex in the star of $h(\sigma)$).
	We can thus define a $\ZZ_2$-homotopy $t \cdot f(g(x)) + (1-t) \cdot h(x)$ (this is clearly continuous for $x$ varying on any simplex $|\sigma|$, hence everywhere) from $h$ to $f \circ g$.
\end{proof}

\section*{Acknowledgments}
The author thanks Claude Tardif for many insightful conversations and comments, a lot of conclusions could have been overlooked without them.
Many thanks as well to Bartosz Walczak for his meticulous review.

\pagebreak[3]

\printbibliography

\end{document}